
\documentclass[12pt]{amsart}

\setlength{\textheight}{21cm}
\setlength{\textwidth}{16cm}
\setlength{\topmargin}{0cm}
\setlength{\parskip}{0.3\baselineskip}
\hoffset=-1.4cm
\baselineskip=13pt

\usepackage{amsmath}
\usepackage{amssymb}
\usepackage[all]{xy}

\numberwithin{equation}{section}

\newtheorem{thm}[equation]{Theorem}
\newtheorem{lem}[equation]{Lemma}
\newtheorem{prop}[equation]{Proposition}
\newtheorem{cor}[equation]{Corollary}
\newtheorem{rem}[equation]{Remark}         
\newtheorem{exam}[equation]{Example}

\newtheorem{exam-nota}[equation]{Example-Notation}

\newtheorem{dfn}[equation]{Definition}

\newtheorem{dfn-nota}[equation]{Definition-Notation}

\newtheorem{dfn-lem}[equation]{Lemma-Definition}
\newtheorem{dfn-prop}[equation]{Proposition-Definition}

\newcommand{\beqa}{\begin{eqnarray*}}
\newcommand{\eeqa}{\end{eqnarray*}}

\newcommand{\fgtilde}{\widetilde{\mathfrak{g}}}

\newcommand{\wpi}{\widetilde{\pi}}

\newcommand{\fa}{\mbox{${\mathfrak a}$}}

\newcommand{\fg}{\mbox{${\mathfrak g}$}}

\newcommand{\fh}{\mbox{${\mathfrak h}$}}

\newcommand{\fz}{\mbox{${\mathfrak z}$}}
\newcommand{\fm}{\mbox{${\mathfrak m}$}}

\renewcommand{\qed}{\hfill $\Box$}

\renewcommand{\proof}{{\bf Proof.\ \ }}

\newcommand{\C}{\mbox{${\mathbb C}$}}

\newcommand{\Ad}{{\rm Ad}}

\newcommand{\fgl}{\mathfrak{gl}}

\newcommand{\xifij}{\xi_{f_{ij}}}

\newcommand{\dn}{{n\choose 2}}
\newcommand{\dnone}{{n+1\choose 2}}

\newcommand{\ad}{\operatorname{ad}}

\newcommand{\cO}{\mathcal{O}}

\newcommand{\invlim}{\varprojlim}
\newcommand{\dirlim}{\varinjlim}
\newcommand{\OX}{\mathcal{O}_{X}}

\newcommand{\stalk}{\mathcal{O}_{X,x}}

\newcommand{\biveci}{\pi_{\infty}}

\newcommand{\anchori}{\widetilde{\biveci}}

\newcommand{\calL}{\mathcal{L}}
\newcommand{\omegainftylambdatilde}{(\widetilde{\omega_{\infty}})_{\lambda}}

\newcommand{\B}{\mathcal{B}}
\newcommand{\wOX}{\widetilde{\OX}}

\newcommand{\bivecfglambda}{\pi_{\fg^{*},\lambda}}
\newcommand{\bivecfgmu}{\pi_{\fg^{*},\mu}}

\newcommand{\anchorfglambda}{\widetilde{\pi_{\fg^{*}}}_{,\lambda}}
\newcommand{\fnij}{f_{ij}}
\author[M. Colarusso]{Mark Colarusso}
\address{Department of Mathematics, University of Wisconsin--Milwaukee}
\email{colaruss@uwm.edu}
\title{Lie-Poisson Theory for Direct Limit Lie algebras}
\begin{document}
\author[M. Lau]{Michael Lau}
\address{D\'epartement de math\'{e}matiques et de statistique, Universit\'{e} Laval}
\email{Michael.Lau@mat.ulaval.ca}
\subjclass[2010]{14L30, 20G20, 37J35, 53D17, 17B65}
\thanks{Funding from the Natural Sciences and
Engineering Research Council of Canada is gratefully acknowledged.}

\maketitle
\begin{abstract}
In this paper, we develop the fundamentals of Lie-Poisson theory for direct limits $G=\dirlim G_{n}$ of complex algebraic groups $G_{n}$ and their Lie algebras $\fg=\dirlim \fg_{n}$.  We show that $\fg^{*}=\invlim\fg_{n}^{*}$ has the structure of a Poisson provariety and that each coadjoint orbit of $G$ on $\fg^{*}$ has the structure of an ind-variety.  We construct a weak symplectic form on every coadjoint orbit and prove that the coadjoint orbits form a weak symplectic foliation of the Poisson provariety $\fg^{*}$.  
We apply our results to the specific setting of $G=GL(\infty)=\dirlim GL(n,\C)$ and $\fg^{*}= M(\infty)=\invlim \fgl(n,\C)$, the space of infinite complex matrices with
arbitrary entries.  We construct a Gelfand-Zeitlin integrable system on $M(\infty)$, which generalizes the one constructed by 
Kostant and Wallach on $\fgl(n,\C)$.  The system integrates to an action of a direct limit group $A(\infty)$ on $M(\infty)$, whose  
 generic orbits are Lagrangian ind-subvarieties of the corresponding coadjoint orbit of $GL(\infty)$ on $M(\infty)$.
\end{abstract}

\section{Introduction}
The interaction between Lie theory and Poisson geometry plays an important role in much of modern mathematics and mathematical physics; it is of central importance in geometric representation theory, integrable systems, and classical mechanics.  Given a finite-dimensional real or complex Lie group $G$, there is a canonical Lie-Poisson structure on the dual space $\fg^{*}$ of its Lie algebra $\fg$.  The starting point of Lie-Poisson theory is the observation that the symplectic leaves of $\fg^{*}$ are the coadjoint orbits of the identity component $G^{0}$ of $G$ equipped with the Kostant-Kirillov symplectic form.  This symplectic structure is the cornerstone of the orbit method in representation theory and plays an important role in deformation quantization.  

The main goal of this paper is to extend this theory to direct limit Lie algebras.  
Given a direct limit group $G$ with Lie algebra $\fg$, we define an analogous Lie-Poisson structure on the dual space $\fg^{*}$ 
and construct a symplectic foliation using the coadjoint action of $G$ on $\fg^{*}$.  There is an extensive literature concerning direct limit groups and their Lie algebras, root systems, and representations, though there has been little study of Poisson geometry in this context.  See for example, \cite{bahturin-benkart,bahturin-strade,dimitrov-penkov,DP,glockner,loos-neher,neeb,penkov-serganova} and the references therein.  Applications of direct limit groups notably include early work on infinite-dimensional integrable systems, including the KP hierarchy.  See \cite{KR}, for instance.  In particular, when $\fg=\fgl(\infty):=\dirlim \fgl(n,\C)$, we use our new Lie-Poisson structure to define an infinite-dimensional analogue of the Gelfand-Zeitlin integrable system on $\fgl(\infty)^{*}$.  

In more detail, let $\{(G_n,\iota_{nm})\}_{n\in\mathbb{N}}$ be a countable, directed system of complex affine algebraic groups $G_n$  for which the transition maps $\iota_{nm}:\ G_n\hookrightarrow G_m$ are homomorphic embeddings of algebraic groups.  The direct limit group $G:=\dirlim G_{n}$ has the structure of an ind-variety and its Lie algebra $\fg=\dirlim \fg_{n}$ is a direct limit Lie algebra.  The algebraic dual $\fg^{*}=\invlim \fg_{n}^{*}$ is a provariety, an inverse limit in the category of varieties.  We show that $\fg^{*}$ has a natural Poisson structure inherited from the Lie-Poisson structure of each $\fg_{n}^{*}$ and compute its characteristic distribution.   

In infinite dimensions, there is no guarantee that the characteristic distribution of a Poisson manifold is integrable nor that its leaves possess a symplectic structure.  Even in the comparatively well-behaved setting of Banach Lie groups $G$, it is not known whether the coadjoint orbits of $G$ on a predual $\fg_{*}$ of its Lie algebra $\fg$ are weakly symplectic \cite{OR}.  One of the main results of this paper is to show that the coadjoint orbits of a direct limit group 
$G$ on the dual of its Lie algebra $\fg^{*}$ form a symplectic foliation of $\fg^{*}$ which is tangent to the characteristic distribution of $\fg^{*}$.


\noindent
\begin{thm}\label{thm:introthm}
[Proposition \ref{p:symplecticform}, Theorem \ref{thm:symplecticleaves}]\\
{\em Let $G=\dirlim G_{n}$ be a direct limit group, and let $\fg^{*}=\invlim \fg_{n}^{*}$ be the dual space of its Lie algebra.  Let $\lambda\in\fg^{*}$, and let $G\cdot\lambda$ be the coadjoint orbit of $\lambda$.  Then
  \begin{enumerate}
 \item  $G\cdot\lambda$ has the structure of a weak symplectic ind-subvariety of $\fg^{*}$.
  \item $G\cdot \lambda$ is tangent to the characteristic distribution of $\fg^{*}$, and  
  the symplectic structure on $G\cdot\lambda$ is compatible with the Poisson structure on $\fg^{*}$.
  \end{enumerate}
  Thus, the coadjoint orbits of $G$ on $\fg^{*}$ form a weak symplectic foliation of the Poisson provariety $\fg^{*}$.
  }
 \end{thm}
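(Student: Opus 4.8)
The plan is to build the symplectic structure one finite stage at a time and then pass to the limit, exploiting the fact that $G = \dirlim G_n$ and $\fg^* = \invlim \fg_n^*$ are dual constructions. First I would fix $\lambda = (\lambda_n)_{n} \in \invlim \fg_n^*$ and set $\lambda_n = \pi_n(\lambda)$, where $\pi_n:\fg^* \to \fg_n^*$ is the canonical projection. The dual of each embedding $\iota_{nm}:\fg_n \hookrightarrow \fg_m$ is a surjection $\fg_m^* \twoheadrightarrow \fg_n^*$, and I expect these surjections to intertwine the coadjoint actions, so that $\pi_n$ maps the $G$-orbit $G\cdot\lambda$ onto the finite-dimensional coadjoint orbit $G_n\cdot\lambda_n$ in $\fg_n^*$. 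The strategy is then to realize $G\cdot\lambda$ as the inverse limit (or at least a directed union of preimages) of the classical coadjoint orbits $G_n \cdot \lambda_n$, each of which carries the Kostant--Kirillov symplectic form $\omega_n$. The ind-variety structure on $G\cdot\lambda$ asserted in part (1) should come from writing $G\cdot\lambda = \dirlim (G_n\cdot\lambda)$ or a similar exhaustion, using that the orbit map for a direct limit group factors through the finite-level orbit maps.

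Next I would construct the weak symplectic form. The tangent space at a point $\mu \in G\cdot\lambda$ is the image of $\fg$ under the infinitesimal coadjoint action $\ad^*$, and the natural candidate two-form is the Kirillov form $\omega_\mu(\ad^*_X\mu, \ad^*_Y\mu) = \mu([X,Y])$ for $X,Y \in \fg$. This is well defined and skew because its radical at $\mu$ is exactly the annihilator of the tangent space, i.e.\ the centralizer, which is the definition of weak nondegeneracy; the point is that for a \emph{weak} symplectic form I only need $\omega$ to be nondegenerate as a map from the tangent space into its algebraic dual with no surjectivity requirement. To see that this $\omega$ is the pullback of compatible finite-level forms, I would check that the projection $\pi_n$ restricted to $G\cdot\lambda \to G_n\cdot\lambda_n$ satisfies $\pi_n^*\omega_n = \omega|_{\text{stage } n}$ in the appropriate sense, so the $\omega_n$ are compatible under the inverse system and assemble to a closed two-form on the ind-variety. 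Closedness should follow stage-by-stage from closedness of each $\omega_n$, since $d$ commutes with the projective-limit pullbacks.

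For part (2), compatibility with the Poisson structure, I would show that the Hamiltonian vector field of a function $f$ on $\fg^*$ (computed via the Lie--Poisson bivector $\pi$ whose characteristic distribution was computed earlier in the paper) is tangent to $G\cdot\lambda$ and that its restriction agrees with the Hamiltonian vector field of $f|_{G\cdot\lambda}$ relative to $\omega$. Concretely, the characteristic distribution of $\fg^*$ at $\lambda$ is spanned by the coadjoint vectors $\ad^*_X\lambda$, which is precisely $T_\lambda(G\cdot\lambda)$; so tangency is immediate once I identify the distribution with the orbit tangent spaces. The compatibility $\{f,g\}|_{G\cdot\lambda} = \omega(X_f, X_g)$ then reduces to a computation with linear functions $X,Y \in \fg \subset (\fg^*)^*$, where both sides equal $\lambda([X,Y])$, and extends to all functions by the Leibniz rule. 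Assembling these gives that every $G\cdot\lambda$ is a leaf tangent to the characteristic distribution carrying a compatible weak symplectic form, i.e.\ the orbits form a weak symplectic foliation.

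The main obstacle I anticipate is not the algebra but the analysis of limits: verifying that the inverse/direct limit structures on $G\cdot\lambda$ are compatible and that $\omega$ is genuinely a smooth (or algebraic) closed two-form on the resulting ind-variety, rather than a merely formal pointwise assignment. In infinite dimensions the tangent spaces $T_\mu(G\cdot\lambda) = \ad^*_\fg\mu$ need not be closed or complemented, and one must be careful that the finite-level forms $\omega_n$ really do glue — i.e.\ that $\pi_n^*\omega_n$ stabilize as $n\to\infty$ on each finite-dimensional tangent direction, which is where the weak (as opposed to strong) nondegeneracy becomes essential and where the directedness of the system does the real work. I would therefore spend most of the effort checking the coherence of the $\omega_n$ across stages and the well-definedness of the ind-variety structure, deferring smoothness to the earlier structural results (Proposition \ref{p:symplecticform}, Theorem \ref{thm:symplecticleaves}) already cited in the statement.
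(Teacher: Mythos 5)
Your proposal follows the paper's own proof essentially step for step: the ind-variety structure $G\cdot\lambda=\dirlim G_{n}\cdot\lambda$ with closed isotropy $G_{n}^{\lambda}=\bigcap_{k\geq n}G_{n}^{\lambda_{k}}$ (Corollary \ref{c:coadjointorbit}), the form $\omega_{\infty}$ built by pulling back the Kostant--Kirillov forms $\omega_{n}$ along the stage projections $p_{n}:G_{n}\cdot\lambda\to G_{n}\cdot\lambda_{n}$ and gluing via $\lambda_{n+1}|_{\fg_{n}}=\lambda_{n}$, with weak nondegeneracy exactly your radical computation $\bigcap_{k\geq n}\fg_{n}^{\lambda_{k}}=\fg_{n}^{\lambda}$ and closedness checked termwise (Proposition \ref{p:symplecticform}, which uses Lemma \ref{wedgelimlemma} to make the pointwise assignment a genuine $2$-form, addressing precisely the ``formal pointwise assignment'' worry you raise), and part (2) by identifying the image of $di_{\lambda}$ with the image of the anchor map and reducing compatibility to $\lambda([X,Y])$ on both sides (Theorem \ref{thm:symplecticleaves}). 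The one phrase to correct is your opening hedge that $G\cdot\lambda$ might be realized as the \emph{inverse} limit of the finite coadjoint orbits $G_{n}\cdot\lambda_{n}$ --- that inverse limit is in general strictly larger than the orbit --- but you immediately also write the correct structure $G\cdot\lambda=\dirlim(G_{n}\cdot\lambda)$, i.e.\ the direct limit of orbits of the \emph{full} functional $\lambda$ under the finite groups $G_{n}$, which is the version the paper uses and which the projections $p_{n}$ then connect to the symplectic data on $G_{n}\cdot\lambda_{n}$.
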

 

  To prove Part (1), we observe that the coadjoint orbit $G\cdot\lambda$ inherits an ind-variety structure from $G$ via:
  $$
  G\cdot\lambda=\dirlim G_{n}\cdot\lambda.
  $$  
  Since $\lambda\in\fg^{*}=\invlim\fg_{n}^{*}$, we can represent $\lambda$ as an infinite sequence, $\lambda=(\lambda_{1},\dots, \lambda_{n},\dots)$ 
  with $\lambda_{n}\in\fg_{n}^{*}$.  For each $n\in\mathbb{N}$, there is a natural projection $G_{n}\cdot\lambda\to G_{n}\cdot \lambda_{n}$, where 
$G_{n}\cdot\lambda_{n}$ is the $G_{n}$-coadjoint orbit of $\lambda_{n}\in\fg_{n}^{*}$.  
Now $G_{n}\cdot\lambda_{n}$ is a symplectic variety with the Kostant-Kirillov symplectic form.  
Pulling back the Kostant-Kirillov form to $G_{n}\cdot\lambda$, we obtain a $2$-form on $G_{n}\cdot\lambda$ for each $n\in\mathbb{N}$.  
We then show that these $2$-forms glue to give a non-degenerate, closed $2$-form on $G\cdot\lambda$ (see Proposition \ref{p:symplecticform}).  To prove Part (2), we use the description of the characteristic distribution of $\fg^{*}$ (see Equations \ref{eq:anchor} and \ref{eq:char}).  

We apply our results to the case where $G$ is the group $GL(\infty):=\dirlim GL(n,\C)$ with the Lie algebra $\fg=\mathfrak{gl}(\infty)=\dirlim \fgl(n,\C)$ of infinite-by-infinite complex matrices with only finitely many nonzero entries.  The dual space $\fg^{*}$ is the Poisson provariety $M(\infty)$ of all infinite-by-infinite complex matrices.
We construct an infinite-dimensional analogue of the Gelfand-Zeitlin integrable system on $M(\infty)$.  
This system generalizes the one constructed by Kostant and Wallach on $\fgl(n,\C)$ in \cite{KW1,KW2}.  

In more detail, we identify $M(\infty)$ with the set of infinite sequences:
$$
M(\infty):=\{X=(X(1), X(2), X(3),\dots ):\, X(n)\in\fgl(n,\C)\mbox { and } X(n+1)_{n}=X(n)\},
$$
where $X(n+1)_{n}$ denotes the $n\times n$ upper left corner of $X(n+1)\in\fgl(n+1,\C)$.  For any $n\in\mathbb{N}$ and $j=1,\dots, n$, let $f_{nj}$ be the function on $M(\infty)$ given by $f_{nj}(X)=tr(X(n)^{j})$, where $tr(\cdot)$ denotes the trace function.  
The algebra generated by the collection of functions $$J_{\infty}:=\{f_{nj}(X) :\, n\in\mathbb{N}, \, j=1,\dots, n\}$$ is then a maximal Poisson-commutative subalgebra of the space of global regular functions on $M(\infty)$.  Moreover, the corresponding Lie algebra of Hamiltonian vector fields $\fa(\infty)$ is infinite dimensional and integrates to an action of a direct limit group $A(\infty)$ which preserves the coadjoint orbits of $GL(\infty)$ on $M(\infty)$.  Following \cite{KW1}, we say that an element $X\in M(\infty)$ is \emph{strongly regular} if the differentials of the functions in $J_{\infty}$ are independent at $X$.  Our main result is the direct limit analogue of Theorem 3.36, \cite{KW1}.
\medskip

\noindent

\begin{thm}\label{thm:sregintro} [Theorem \ref{thm:Lagrangian}]\\
\em{ Let $X\in M(\infty)$ be strongly regular.  Then $A(\infty)\cdot X\subset GL(\infty)\cdot X$ is an irreducible, Lagrangian ind-subvariety 
of $GL(\infty)\cdot X$ with respect to the weak symplectic form on $GL(\infty)\cdot X$ given in Part (1) of Theorem \ref{thm:introthm}.
}

\end{thm}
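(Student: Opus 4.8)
The plan is to establish the four assertions — ind-variety structure, irreducibility, containment in the coadjoint orbit, and the Lagrangian property — by reducing systematically to the finite-dimensional Gelfand--Zeitlin theory of Kostant and Wallach and then passing to the direct limit. First I would realize $A(\infty)\cdot X$ as an ind-variety by writing $A(\infty)=\dirlim A_{m}$, where $A_{m}$ is the connected subgroup generated by the Hamiltonian flows of $\{f_{nj}:\ n\le m\}$, so that $A(\infty)\cdot X=\dirlim(A_{m}\cdot X)$. Each $A_{m}$ is connected, so each $A_{m}\cdot X$ is an irreducible locally closed subvariety and the transition maps are closed embeddings; a direct limit of irreducible varieties along closed embeddings is an irreducible ind-variety. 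Containment $A(\infty)\cdot X\subseteq GL(\infty)\cdot X$ follows from Theorem \ref{thm:introthm}(2): the coadjoint orbits are the leaves tangent to the characteristic distribution of $\fg^{*}$, and the Hamiltonian vector field $\xi_{f_{nj}}$ of a global regular function is tangent to this distribution, so every flow in $A(\infty)$ preserves $GL(\infty)\cdot X$. This exhibits $A(\infty)\cdot X$ as a locally closed ind-subvariety.

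By Part (2) of Theorem \ref{thm:introthm} the weak symplectic form $\omega$ on $GL(\infty)\cdot X$ is compatible with the Poisson structure of $\fg^{*}$, so that $\omega(\xi_{f_{nj}},\xi_{f_{mk}})=\{f_{nj},f_{mk}\}$ along the orbit. Since $J_{\infty}$ is Poisson-commutative, all of these brackets vanish, and hence the tangent space $T_{X}(A(\infty)\cdot X)=\lspan\{\xi_{f_{nj}}(X)\}$ is $\omega$-isotropic; by strong regularity these vectors are independent. It remains to prove maximality, which is the heart of the matter. Let $\pi_{n}\colon M(\infty)\to\fgl(n,\C)$ be the projection $X\mapsto X(n)$; it is a Poisson morphism, and $\omega|_{GL(n,\C)\cdot X}=\pi_{n}^{*}\omega_{n}^{KK}$, where $\omega_{n}^{KK}$ is the Kostant--Kirillov form on $GL(n,\C)\cdot X(n)$. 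The key computation is that, because $X(i)^{j-1}$ commutes with $X(i)$,
$$
(\pi_{n})_{*}\,\xi_{f_{ij}}(X)=
\begin{cases}
\xi^{(n)}_{f_{ij}}(X(n)), & i<n,\\[2pt]
0, & i\ge n,
\end{cases}
$$
where $\xi^{(n)}_{f_{ij}}$ is the Hamiltonian vector field of the corresponding finite Gelfand--Zeitlin function on $\fgl(n,\C)$.

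Now take $v\in T_{X}(GL(\infty)\cdot X)=\dirlim T_{X}(GL(n,\C)\cdot X)$ lying in the $\omega$-orthogonal complement of $T_{X}(A(\infty)\cdot X)$, and fix $N$ with $v\in T_{X}(GL(N,\C)\cdot X)$. Pushing the orthogonality relations forward by $\pi_{n}$ shows that $(\pi_{n})_{*}v$ is $\omega_{n}^{KK}$-orthogonal to the tangent space of the finite Gelfand--Zeitlin orbit of $X(n)$. Since $X(n)$ is strongly regular, Theorem 3.36 of \cite{KW1} asserts that this finite orbit is Lagrangian, so $(\pi_{n})_{*}v\in\lspan\{\xi^{(n)}_{f_{ij}}(X(n)):i<n\}$; write $(\pi_{n})_{*}v=\sum_{i<n}c^{(n)}_{ij}\xi^{(n)}_{f_{ij}}(X(n))$ with coefficients unique by strong regularity. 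Comparing consecutive levels via $\pi_{n}$ and using that each $f_{nj}$ is a Casimir of $\fgl(n,\C)^{*}$ (so $\xi^{(n)}_{f_{nj}}=0$ on the orbit) shows $c^{(n+1)}_{ij}=c^{(n)}_{ij}$ for $i<n$; thus the coefficients stabilize to values $c_{ij}$, and $v=\invlim(\pi_{n})_{*}v$ agrees with $\sum_{i,j}c_{ij}\xi_{f_{ij}}(X)$ as a point of $\invlim\fgl(n,\C)$.

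To conclude that $v$ genuinely lies in $T_{X}(A(\infty)\cdot X)$ — and not merely in some completion — I would show the stabilized coefficients satisfy $c_{ij}=0$ for $i\ge N$, so that the combination is finite and therefore a true tangent vector to the $A(\infty)$-orbit. This is where the hypothesis $v\in T_{X}(GL(N,\C)\cdot X)$, that is $v=[\zeta,X]$ with $\zeta\in\fgl(N,\C)$, enters, combined with the triangular structure of the Gelfand--Zeitlin fields relative to the tower: $\xi_{f_{ij}}(X)$ first becomes nonzero at level $i+1$, its leading support lies in the width-$i$ border, and strong regularity makes these leading terms independent; comparing the supports of $v(m)$ and $\sum_{i<m}c_{ij}\xi^{(m)}_{f_{ij}}(X(m))$ outside the width-$N$ border forces $c_{ij}=0$ for $i\ge N$. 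This support and finiteness analysis is the main obstacle: because $\omega$ is only weakly nondegenerate, no dimension count is available, and one must control the $\omega$-orthogonal complement of an infinite-dimensional isotropic subspace entirely through the finite-level Lagrangian property together with the combinatorics of the Gelfand--Zeitlin vector fields.
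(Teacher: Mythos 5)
Your overall architecture is sound and genuinely different from the paper's, but it has two real gaps. First, the ind-variety structure is asserted rather than proved: because the action of $A(\infty)$ on $M(\infty)$ is \emph{not} algebraic (the paper notes this explicitly after Equation \ref{eq:actiononMinfty}), you cannot appeal to general orbit theory to conclude that each $A_m\cdot X$ is a locally closed subvariety with closed-embedding transition maps, nor that $T_X(A(\infty)\cdot X)=\lspan\{\xi_f(X):f\in J_\infty\}$ --- the latter is assertion (3) of the theorem itself. The paper builds the structure by hand: strong regularity forces trivial stabilizers (Proposition \ref{p:moresreg}(2)), so $GL(\infty)\cdot X\cong GL(\infty)$, and then one identifies $i^{-1}(A(\infty)\cdot X)\cap G_n$ with the product of centralizers $\mathcal{Z}_n=Z_{G_1}(X_1)\cdots Z_{G_n}(X_n)$, whose smoothness, irreducibility, and dimension ${n+1\choose 2}$ come from Kostant--Wallach together with a base-change argument for the smooth orbit map $p_n:G_n\to G_n\cdot X_n$. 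Some such concrete model is unavoidable, since it is also what computes the tangent space (Equation \ref{eq:Zntangent}) that your isotropy argument takes for granted.

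Second, and more seriously, the coisotropy step --- which you correctly identify as the heart of the matter --- is left as a sketch, and the mechanism you propose (border-support combinatorics and ``independent leading terms'') is not a proof. The good news is that your stabilization computation is correct and the finiteness claim is true; it can be closed cleanly using the centralizer form of strong regularity rather than support analysis. From $[\zeta,X_m]=\sum_{i<m}c_{ij}\,j[X_i^{j-1},X_m]$ with stabilized coefficients, set $z_m:=\zeta-\sum_{i<m}c_{ij}\,jX_i^{j-1}$ for $m\geq N$; the level-$m$ identity says $z_m\in\fz_{\fg_m}(X_m)$, and subtracting consecutive levels gives $z_{m+1}=z_m-\sum_j c_{mj}\,jX_m^{j-1}\in\fz_{\fg_m}(X_m)\cap\fz_{\fg_{m+1}}(X_{m+1})=0$ by Proposition \ref{prop:sreg}(3). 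Hence $c_{mj}=0$ for all $m>N$ (not $m\geq N$: the fields $\xi_{f_{Nj}}$ live at level $N$ and can occur), and $v=\sum_{i\leq N}c_{ij}\xi_{f_{ij}}(X)$ is an honest finite combination. You should also note that the paper sidesteps your entire expansion-and-stabilization analysis via Proposition \ref{p:Lag}: a tangent vector $v$ at level $N$ that is orthogonal merely to $T_X(\calL_N)$ pushes down to $T_{X_N}(A(N)\cdot X_N)^{\perp}=T_{X_N}(A(N)\cdot X_N)$ by the finite Lagrangian property, and the exact equality $T_z(\mathcal{Z}_N)=dp_N^{-1}\bigl(T(A(N)\cdot X_N)\bigr)$ --- the extra $N$ dimensions being precisely the $\ker dp_N$ directions generated by the level-$N$ Casimirs $f_{Nj}$ --- places $v$ in $T_X(\calL)$ immediately, with no infinite sums. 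That enlargement of the finite-level model from the Lagrangian orbit $A(N)\cdot X_N$ to $\mathcal{Z}_N$ is exactly what absorbs the ambiguity your $z_m$'s track, and is the structural idea your proposal is missing.
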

\medskip

In the finite-dimensional real setting of $\mathfrak{u}(n)$, Guillemin and Sternberg use such a Gelfand-Zeitlin foliation to construct the irreducible representations of the unitary group $U(n)$ via geometric quantization \cite{GS}.  The present paper lays the necessary groundwork and performs the first steps towards an analogous such construction in the direct limit context.  We hope that our work will later lead to a symplectic interpretation of results such as the Bott-Borel-Weil theorem for direct limit groups \cite{DPW}.

The paper is organized as follows.  In Section 2, we study general provarieties $X=\invlim X_{n}$, where $X_{n}$ is a finite-dimensional variety defined over an arbitrary algebraically closed field $F$ of characteristic zero.  We define a structure sheaf $\mathcal{O}_{X}$ which makes the pair $(X,\mathcal{O}_{X})$ into a locally ringed space.    In Section \ref{s:Poisson}, we specialize to the case where each $X_{n}$ is a Poisson variety and show that $\mathcal{O}_{X}$ is a sheaf of Poisson algebras (Definition-Proposition \ref{dp:poisprovariety}).  The provariety structure on $\fg^{*}$ is described in Example \ref{MPoisson}, and its Lie-Poisson structure is obtained in Example \ref{ex:Lie-Poisson}.  In Section 3, we review basic facts about ind-varieties and describe the ind-variety structure of the coadjoint orbits $G\cdot\lambda$ (Proposition \ref{prop:homog} and Corollary \ref{c:coadjointorbit}).  In Section 4, we develop the weak symplectic form on $G\cdot\lambda$ and prove Theorem \ref{thm:introthm}.  In Section 5, we construct the Gelfand-Zeitlin integrable system on $M(\infty)$ and prove Theorem \ref{thm:sregintro}.

\bigskip

\noindent
{\bf Acknowledgements.} We would like to thank Ivan Dimitrov, Sam Evens, Shrawan Kumar, Karl-Hermann Neeb, and Ivan Penkov, for many useful discussions during this project.



\bigskip

\noindent
{\bf Notation.} Throughout this paper, $\mathbb{N}$ and $\C$ will denote the positive integers and complex numbers, respectively.  

\section{Provarieties}
\subsection{The structure sheaf of a provariety}\label{s:provarieties}
Let $\{(X_{n}, p_{nm})\}_{n\in \mathbb{N}}$ be an inverse system of irreducible varieties over an 
algebraically closed field $F$ of characteristic zero with surjective transition morphisms:
$p_{nm}: X_{n}\to X_{m}$ for $n\geq m$.  We call the inverse limit $X=\invlim X_{n}$ of such a system $(X_{n}, p_{nm})$ a {\em provariety}.  
Another introduction to provarieties may be found in \cite{MN}.  They do not assume that their inverse system of varieties is countable.  We will only consider countable inverse systems of varieties, and the exposition here is self-contained.

As a topological space, $X$ has the inverse limit topology.  A basis for this topology is the collection of sets $$\mathcal{B}=\{p_{n}^{-1}(U_{n}): \, U_{n}\subset X_{n} \mbox{ is open}\}.$$  
We construct a structure sheaf $\mathcal{O}_{X}$ on $X$ which makes $(X,\OX)$ into a locally ringed space.  We begin by defining a $\B$-presheaf $\widetilde{\OX}$ of $F$-algebras on $X$, i.e. a presheaf whose sections $\widetilde{\OX}(U)$ are defined only for 
$U\in\B$.  Suppose $U\in\B$ with $U=p_{n}^{-1}(U_{n})$ for some open subset $U_{n}\subseteq X_{n}$.  
 The inverse system $\{(X_{k}, p_{\ell k})\}_{\ell\geq k\geq n}$ gives rise to a directed system of $F$-algebras $\{\mathcal{O}_{X_{k}}(p_{kn}^{-1}(U_{n})), p_{\ell k}^{*}\}_{\ell\geq k\geq n}$.  Since the transition maps $p_{\ell k}$ are surjective for all pairs $\ell\geq k$, it follows that the canonical projections $p_{k}:X\to X_{k}$ are surjective for all $k$. 
Thus, $\mathcal{O}_{X_{k}}(p_{kn}^{-1}(U_{n}))\cong p_{k}^{*}\mathcal{O}_{X_{k}}(p_{kn}^{-1}(U_{n}))$ and we can define:
\begin{equation}\label{eq:Bpresheaf}
\widetilde{\OX}(U):=\dirlim_{k\geq n} p_{k}^{*} \mathcal{O}_{X_{k}}(p_{kn}^{-1}(U_{n})).
\end{equation}
We claim that (\ref{eq:Bpresheaf}) makes $\widetilde{\OX}$ into a $\B$-presheaf.  
Indeed, suppose we have $V\subseteq U$ with $V,\, U\in\B$.  Let $V=p_{\ell}^{-1}(U_{\ell})$ for $U_{\ell}\subseteq X_{\ell}$ open.  
We define the restriction maps $$\rho_{UV}: \widetilde{\OX}(U)\to \widetilde{\OX}(V)$$ as follows.  Suppose $f\in\widetilde{\OX}(U)$.  Then $f=p_{k}^{*} f_{k}$ for some $f_{k}\in \mathcal{O}_{X_{k}}(p_{kn}^{-1}(U_{n}))$ and $k\geq n$.  Let $m\geq \ell, \, k$.  Then $f=p_{m}^{*}p_{mk}^{*} f_{k}$ with $p_{mk}^{*}f_{k}\in\mathcal{O}_{X_{m}}(p_{mn}^{-1}(U_{n}))$. Since $p_{m}$ is surjective, $p_{m\ell}^{-1}(U_{\ell})\subseteq p_{mn}^{-1}(U_{n})$.  
We can therefore define 
$$\rho_{UV}(f):=p_{m}^{*}(p_{mk}^{*} f_{k}|_{p_{m\ell}^{-1}(U_{\ell})})\in\wOX(V),$$ 
where $(p_{mk}^{*} f_{k})|_{p_{m\ell}^{-1}(U_{\ell})}$ denotes the 
restriction of $p_{mk}^{*} f_{k} \in \mathcal{O}_{X_{m}}(p_{mn}^{-1}(U_{n}))$ to $p_{m\ell}^{-1}(U_{\ell})$.  One can verify that $\rho_{UV}$ is well defined and that for 
$W\subseteq V\subseteq U$ with $W\in \B$, we have $\rho_{UW}=\rho_{VW}\circ\rho_{UV}.$
Note also that $\rho_{UU}=Id_{\widetilde{\OX}(U)}$.  Thus, $\widetilde{\OX}$ is a $\B$-presheaf of $F$-algebras.  Since inverse limits exist in the category of $F$-algebras, 
we can form a presheaf on all of $X$ by setting
\begin{equation}\label{eq:Xpresheaf}
\OX(U):=\invlim_{V\subseteq U,\, V\in\B} \widetilde{\OX}(V).
\end{equation}
for each open set $U\subseteq X$.  It follows from the universal property of the inverse limit that $\OX$ is a presheaf on $X$,
and $\OX(U)=\widetilde{\OX}(U)$ for $U\in\B$.  
Moreover,  $\OX$ is in fact a sheaf on $X$.
\begin{prop}\label{p:sheaf}
The presheaf $\OX$ on $X$ is a sheaf of $F$-algebras on $X$.
\end{prop}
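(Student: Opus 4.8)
The plan is to deduce the sheaf property of $\OX$ from the corresponding property of the $\B$-presheaf $\wOX$, by invoking the standard fact that a presheaf defined on a basis by the inverse-limit formula (\ref{eq:Xpresheaf}) is a sheaf on $X$ exactly when $\wOX$ satisfies the sheaf axioms with respect to coverings of basis elements by basis elements (i.e. $\wOX$ is a ``$\B$-sheaf''). The point $\B$ is closed under finite intersections: for $n\geq m$ one has $p_{m}^{-1}(U_{m})=p_{n}^{-1}(p_{nm}^{-1}(U_{m}))$, so $p_{n}^{-1}(U_{n})\cap p_{m}^{-1}(U_{m})=p_{n}^{-1}(U_{n}\cap p_{nm}^{-1}(U_{m}))\in\B$. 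Consequently the intersections $V_{i}\cap V_{j}$ appearing in the sheaf diagram again lie in $\B$ and no auxiliary refinement of the cover is needed. Thus it suffices to prove, for every $U\in\B$ and every covering $U=\bigcup_{\alpha}V_{\alpha}$ by basis elements $V_{\alpha}\in\B$, that separatedness and gluing hold for $\wOX$.

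The heart of the argument is a reduction from the (possibly infinite) covering $\{V_{\alpha}\}$ to a finite subcollection lifted to a single finite level. Writing $U=p_{n}^{-1}(U_{n})\cong\invlim_{k\geq n}p_{kn}^{-1}(U_{n})$, each $p_{kn}^{-1}(U_{n})$ is open in the noetherian space $X_{k}$, hence quasi-compact, and the transition maps $p_{\ell n}^{-1}(U_{n})\to p_{kn}^{-1}(U_{n})$ remain surjective; from this I would conclude that $U$ itself is quasi-compact, so that $\{V_{\alpha}\}$ admits a finite subcover $V_{\alpha_{1}},\dots,V_{\alpha_{r}}$, say $V_{\alpha_{i}}=p_{m_{i}}^{-1}(W_{i})$. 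Choosing a level $k$ exceeding all the $m_{i}$ and the finitely many levels at which the relevant representatives are defined, I transport everything to $X_{k}$: since $p_{k}$ is surjective, the sets $p_{km_{i}}^{-1}(W_{i})$ form an open covering of $p_{kn}^{-1}(U_{n})\subseteq X_{k}$, and each section of $\wOX$ on $V_{\alpha_{i}}$ is represented by an honest element of $\mathcal{O}_{X_{k}}(p_{km_{i}}^{-1}(W_{i}))$. Separatedness and gluing for $\wOX$ then follow at once from the sheaf property of $\mathcal{O}_{X_{k}}$ on $X_{k}$: a section vanishing on every $V_{\alpha}$ has a representative vanishing on the finite open cover, hence vanishes; and a compatible family, pulled back to the common level $k$ (chosen so that the finitely many compatibility relations become genuine equalities of functions), glues to a unique element of $\mathcal{O}_{X_{k}}(p_{kn}^{-1}(U_{n}))$ whose pullback is the desired section. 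A short separatedness argument then extends the matching of restrictions from the finite subcover to all of $\{V_{\alpha}\}$.

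The main obstacle is precisely the quasi-compactness of $U=p_{n}^{-1}(U_{n})$, i.e. the extraction of a finite subcover: this is the one place where the infinite-dimensional nature of $X$ threatens the argument, since the sections $s_{\alpha}$ a priori live at unboundedly high levels $m_{\alpha}$ and cannot all be brought to a common finite level unless only finitely many are needed. I would establish it via the finite-intersection-property characterization of quasi-compactness: given a family $\{C_{\gamma}\}$ of closed sets with the FIP, the level-$k$ shadows $Z_{k}:=\bigcap_{\gamma}\overline{p_{k}(C_{\gamma})}$ are nonempty closed subsets of the noetherian spaces $X_{k}$ forming an inverse system, and one checks that a point of $\invlim Z_{k}$ lies in $\bigcap_{\gamma}C_{\gamma}$. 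Producing such a point is the genuine difficulty, and it is here that the surjectivity hypothesis on the $p_{nm}$, combined with the noetherian property (which forces descending chains of closed subsets to stabilize, giving a Mittag--Leffler condition), does the work. The remainder of the proof is the formal transfer of the sheaf axioms from the finite levels $X_{k}$, where $\mathcal{O}_{X_{k}}$ is already a sheaf, up to $X$.
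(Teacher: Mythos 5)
Your reduction to checking the axioms of $\wOX$ on coverings of basic opens by basic opens is fine (it is the same first step as the paper, which cites \cite[Lemma 1.13]{MiDmod}), but the pillar of your argument --- quasi-compactness of $U=p_{n}^{-1}(U_{n})$ --- is false in the paper's generality, so the finite-subcover reduction on which everything else rests cannot be carried out. The paper only assumes $F$ algebraically closed of characteristic zero, so $F$ may be countable, e.g.\ $F=\overline{\mathbb{Q}}$. Enumerate $F=\{a_{1},a_{2},\dots\}$, take $X_{n}=\mathbb{A}^{n}$ with the coordinate projections $p_{nm}$ (surjective morphisms of irreducible varieties, so all standing hypotheses hold), and let $U=X=p_{1}^{-1}(\mathbb{A}^{1})\in\B$. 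For each $m$ set $\mathcal{V}_{m}=p_{m+1}^{-1}\bigl(\{x\in\mathbb{A}^{m+1}:x_{m+1}(x_{1}-a_{m})\neq 1\}\bigr)\in\B$. Since every $x\in X$ has $x_{1}=a_{m}$ for some $m$, the $\mathcal{V}_{m}$ cover $X$; but for any finite set $m_{1}<\dots<m_{r}$, choosing $x_{1}\notin\{a_{m_{1}},\dots,a_{m_{r}}\}$ and $x_{m_{i}+1}=(x_{1}-a_{m_{i}})^{-1}$ produces a point outside $\mathcal{V}_{m_{1}}\cup\dots\cup\mathcal{V}_{m_{r}}$. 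So there is no finite subcover, and sections of a compatible family really can live at unboundedly high levels. The same example shows concretely why your proposed mechanism for the FIP step fails: for the dual closed family $D_{m}=X\setminus\mathcal{V}_{m}$, your sets $Z_{k}$ are all nonempty, the descending closures $\overline{p_{\ell k}(Z_{\ell})}$ stabilize by noetherianity exactly as you say, and yet $\invlim Z_{k}=\emptyset$, because the stabilized Mittag--Leffler condition only makes the images \emph{dense} constructible subsets, and over a countable field one cannot select a compatible point from a countable intersection of dense constructible sets (here $\bigcap_{m}\{x_{1}\neq a_{m}\}=\emptyset$). Over an uncountable field such as $\C$ this could be patched with a Baire-type/uncountability argument, but no proof can rescue the statement over countable $F$.

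The paper avoids finiteness considerations altogether by exploiting the hypothesis your proposal never uses: irreducibility of the $X_{n}$. Given a compatible family $\{f_{i}\}$, any two members agree on a nonempty basic open (nonempty because the $X_{k}$ are irreducible and the $p_{k}$ surjective), hence agree as rational functions; so the \emph{entire} family, however infinite, determines a single element $g$ of the function field $F(X)=\dirlim p_{n}^{*}F(X_{n})$, which is automatically represented at one finite level as $g=p_{n}^{*}g_{n}$ --- the level-boundedness you were trying to force with a finite subcover comes for free from $F(X)$ being a direct limit. Surjectivity of $p_{n}$ then shows $g_{n}$ is defined at every point of $U_{n}$ (near each point, $g$ coincides with some regular $f_{i}$), so $g_{n}\in\mathcal{O}_{X_{n}}(U_{n})$ and $g$ glues the family; separatedness is again immediate from irreducibility, since a regular function vanishing on a nonempty open subset of an irreducible variety vanishes identically. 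In short: the paper trades your compactness for irreducibility, and where your argument has an irreparable gap, the paper's closes in a paragraph.
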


\noindent
\begin{proof}
It follows from \cite[Lemma 1.13]{MiDmod} that it suffices to check the sheaf axioms on sections
$\OX(U)$ with $U\in\B$. 
Accordingly, let $U\in\B$ with $U=p_{n}^{-1}(U_{n})$ with $U_{n}\subseteq X_{n}$ open, and let
$\bigcup_{i\in I} p_{i}^{-1}(U_{i})=U$ be an open cover of $U$ by basic open sets of $X$.  
Suppose that for each $i\in I$, we are given $f_{i}\in\OX(p_i^{-1}(U_{i}))$ such that 
\begin{equation}\label{eq:overlap}
f_{i}|_{p_{i}^{-1}(U_{i})\cap p_{j}^{-1}(U_{j})}=f_{j}|_{p_{i}^{-1}(U_{i})\cap p_{j}^{-1}(U_{j})}
\end{equation}
 for every $i, \, j\in I$.  
 Let $F(X_{n})$ be the function field of $X_{n}$.  Consider the field:
\begin{equation}\label{eq:functionfield}
F(X):=\dirlim p_{n}^{*} F(X_{n}).
\end{equation}
 Equation (\ref{eq:overlap}) implies that the functions $f_{i}$ with $i\in I$ define the same element $g\in F(X)$.  
Without loss of generality, we may assume that $g=p_{n}^{*} g_{n}$ for 
$g_{n}\in F(X_{n})$.  We claim that $g_{n}\in\mathcal{O}_{X_{n}}(U_{n})$.  By construction, $g|_{p_{i}^{-1}(U_{i})}=f_{i}$ for all $i$.  
Now let $x\in p_{n}^{-1}(U_{n})$, then $x\in p_{i}^{-1}(U_{i})$ for some $i\in I$.  We have
$$
f_{i}(x)=g(x)=g_{n}(x_{n}), 
$$
where $x_{n}=p_{n}(x)$.  
Since $p_{n}:\invlim X_{k}\to X_{n}$ is surjective,  $g_{n}\in F(X_{n})$ is defined at all points of 
$U_{n}\subset X_{n}$.  Thus, $g_{n}\in \mathcal{O}_{X_{n}}(U_{n})$, so that $g=p_{n}^{*} g_{n}\in \mathcal{O}_{X}(U)$. 

Since the varieties $X_{n}$ are irreducible for all $n$, the restriction maps 
$\rho_{p_{n}^{-1}(U_{n}), p_{i}^{-1}(U_{i})}$ are injective.  Indeed, suppose that $f\in\OX(p_{n}^{-1}(U_{n}))$ 
with $f|_{p_{i}^{-1}(U_{i})}=0$ for some $i\in I$.  Then there exists $k\geq n,\, i$ and a regular function $f_{k}\in \mathcal{O}_{X_{k}}(p_{kn}^{-1}(U_{n}))$
such that $f=p_{k}^{*}f_{k}$ and $f_{k}|_{p_{ki}^{-1}(U_{i})}=0$.  But then since $p_{ki}^{-1}(U_{i})\subseteq p_{kn}^{-1}(U_{n})$ is open and 
$p_{kn}^{-1}(U_{n})$ is irreducible, it follows that $f_{k}=0$ and hence $f=0$.  
\qed

\end{proof}

Proposition \ref{p:sheaf} implies that $(X,\OX)$ is a ringed space.  Since stalks $\stalk$ 
can be computed using basic open sets, Equation \ref{eq:Bpresheaf} implies that
\begin{equation}\label{eq:stalks}
\stalk=\dirlim p_{n}^{*}\mathcal{O}_{X_{n},x_{n}}\cong \dirlim \mathcal{O}_{X_{n},x_{n}}, 
\end{equation}
where $x_{n}=p_{n}(x)$. 
Equation \ref{eq:stalks} implies that $(X,\OX)$ is a locally ringed space.
\begin{prop}\label{p:locrings}
Let $X=\invlim X_{n}$ be a provariety, and let $x\in X$ with $x_{n}=p_{n}(x)$.  Let $\fm_{x_{n}}$ be the unique maximal ideal of the local ring $\mathcal{O}_{X_{n}, x_{n}}$.  Then the stalk $\mathcal{O}_{X,x}$ of the sheaf $\OX$ at $x\in X$ is a local ring with maximal ideal $\mathfrak{m}=\dirlim p_{n}^{*} \mathfrak{m}_{x_{n}}$. 
\end{prop}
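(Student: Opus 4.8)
The plan is to leverage the description of the stalk already obtained in Equation \ref{eq:stalks}, which exhibits $\stalk$ as the direct limit $\dirlim \mathcal{O}_{X_{n},x_{n}}$ of the local rings of the $X_{n}$ at the points $x_{n}=p_{n}(x)$, with transition maps the pullbacks $p_{nm}^{*}\colon \mathcal{O}_{X_{m},x_{m}}\to\mathcal{O}_{X_{n},x_{n}}$ for $n\geq m$. The essential observation is that each such $p_{nm}^{*}$ is a \emph{local} homomorphism: since $p_{nm}(x_{n})=x_{m}$, any germ $f$ vanishing at $x_{m}$ pulls back to $p_{nm}^{*}f=f\circ p_{nm}$, which vanishes at $x_{n}$, so $p_{nm}^{*}(\fm_{x_{m}})\subseteq\fm_{x_{n}}$. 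The statement then reduces to the purely algebraic fact that a direct limit of local rings along local homomorphisms is again local, with maximal ideal the direct limit of the maximal ideals. Because direct limits are exact, $\dirlim p_{n}^{*}\fm_{x_{n}}$ is genuinely an ideal of $\stalk$, and I would set $\fm:=\dirlim p_{n}^{*}\fm_{x_{n}}$ and prove that its complement is exactly the set of units.

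First I would record the two elementary facts that detect unit-ness and ideal membership at finite stages. An arbitrary element of $\stalk$ is represented by some $a\in\mathcal{O}_{X_{n},x_{n}}$; write $[a]$ for its image in the colimit. Since multiplication in a direct limit is computed at a finite stage, $[a]$ is a unit in $\stalk$ if and only if $p_{mn}^{*}(a)$ is a unit in $\mathcal{O}_{X_{m},x_{m}}$ for some $m\geq n$, equivalently (as each $\mathcal{O}_{X_{m},x_{m}}$ is local) if and only if $p_{mn}^{*}(a)\notin\fm_{x_{m}}$ for some $m\geq n$. Dually, $[a]\in\fm$ if and only if $p_{mn}^{*}(a)\in\fm_{x_{m}}$ for some $m\geq n$: the forward direction equates representatives of $[a]$ and of an element of $\dirlim\fm_{x_{n}}$ at a common stage and then invokes localness, while the backward direction is immediate from the definition of $\fm$.

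The crux is then to show that $[a]$ is a unit in $\stalk$ precisely when $[a]\notin\fm$; this identifies the non-units with $\fm$ and forces $\stalk$ to be local with maximal ideal $\fm$. If $[a]\notin\fm$, then $p_{mn}^{*}(a)\notin\fm_{x_{m}}$ for all $m\geq n$; taking $m=n$ shows that $a$ itself is a unit, hence so is $[a]$. The delicate converse is where the localness of the transition maps does the real work: were $[a]$ both a unit and a member of $\fm$, there would be $m$ with $p_{mn}^{*}(a)$ a unit in $\mathcal{O}_{X_{m},x_{m}}$ and $m'$ with $p_{m'n}^{*}(a)\in\fm_{x_{m'}}$; passing to a common refinement $m''\geq m,m'$, the image $p_{m''n}^{*}(a)$ would be simultaneously a unit (the image of a unit under a ring homomorphism) and an element of $\fm_{x_{m''}}$ (the image of a maximal-ideal element under a local homomorphism), which is impossible in the nonzero local ring $\mathcal{O}_{X_{m''},x_{m''}}$.

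I expect this last contradiction --- reconciling the two competing finite-stage descriptions of a single element of the colimit --- to be the only genuine subtlety, since a representative may lie outside $\fm_{x_{n}}$ at one stage yet be carried into the maximal ideal at a later stage. Localness is exactly the hypothesis guaranteeing that this cannot happen once an element is known to be a unit at some stage, so that the ``eventual'' behavior is well defined. The remaining points --- that $\fm$ is an ideal and the bookkeeping with the common refinement $m''$ --- are routine.
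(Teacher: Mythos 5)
Your proposal is correct and follows essentially the same route as the paper, which deduces the proposition from Lemma \ref{l:localdirect}: the stalk $\stalk\cong\dirlim \mathcal{O}_{X_{n},x_{n}}$ is a directed limit of local rings along the local homomorphisms $p_{nm}^{*}$, and such a limit is local with maximal ideal $\dirlim \fm_{x_{n}}$. Your write-up is actually more complete than the paper's one-line proof of that lemma: you explicitly verify that the transition maps are local and, via the common-refinement argument, that $\fm$ is a proper ideal (no element can be a unit at one stage yet be carried into the maximal ideal at a later stage), a point the paper leaves implicit.
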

The proposition follows immediately from the following general fact. 
\begin{lem}\label{l:localdirect}
Suppose $\{(A_{n},\fm_{n}, \phi_{nm})\}_{n\in\mathbb{N}}$ is a directed system of local rings with $\fm_{n}\subset A_{n}$
the unique maximal ideal and local homomorphisms $\phi_{nm}: A_{n}\to A_{m}$ for $n\leq m$.  Then the direct limit $A=\dirlim A_{n}$
is a local ring with unique maximal ideal $\fm=\dirlim \fm_{n}$.  
\end{lem}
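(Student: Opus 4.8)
The plan is to prove that the direct limit $A=\dirlim A_n$ of a directed system of local rings $(A_n,\fm_n,\phi_{nm})$ with local transition homomorphisms is again local, with maximal ideal $\fm=\dirlim\fm_n$. The cleanest strategy is to verify that an element of $A$ is a unit if and only if it lies outside $\fm$, since a commutative ring is local precisely when its non-units form an ideal (equivalently, when the non-units are closed under addition). First I would recall the concrete description of the direct limit: every element of $A$ is represented by some $a_n\in A_n$ (via the canonical map $\psi_n\colon A_n\to A$), two elements $\psi_n(a_n)$ and $\psi_m(a_m)$ coincide iff they agree after pushing forward to some common $A_k$, and $\fm=\dirlim\fm_n$ is the image $\bigcup_n\psi_n(\fm_n)$, which is an ideal of $A$ because each $\fm_n$ is an ideal and the maps are compatible.

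The key step is the characterization of units. I would show that $\psi_n(a_n)$ is a unit in $A$ if and only if $a_n\notin\fm_n$, i.e. $a_n$ is already a unit in $A_n$. The forward-compatible direction uses that the maps $\phi_{nm}$ are \emph{local}: if $a_n\notin\fm_n$ then $a_n$ is a unit in $A_n$, so its inverse maps to an inverse of $\psi_n(a_n)$ in $A$. Conversely, suppose $\psi_n(a_n)$ is a unit in $A$ with inverse represented by some $\psi_m(b_m)$. Pushing both representatives forward to a common index $k$, the relation $\psi_k(\phi_{nk}(a_n)\phi_{mk}(b_m)-1)=0$ means $\phi_{nk}(a_n)\phi_{mk}(b_m)-1$ maps to $0$ in some $A_\ell$, so $\phi_{n\ell}(a_n)$ becomes a unit in $A_\ell$, hence $\phi_{n\ell}(a_n)\notin\fm_\ell$. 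Because $\phi_{n\ell}$ is a local homomorphism, $\phi_{n\ell}^{-1}(\fm_\ell)=\fm_n$, which forces $a_n\notin\fm_n$. This establishes that the non-units of $A$ are exactly the elements of $\fm=\bigcup_n\psi_n(\fm_n)$.

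Once units are identified, the conclusion follows quickly: $\fm$ is an ideal (as noted above) consisting precisely of the non-units, so it is the unique maximal ideal and $A$ is local. To round this off I would observe that $\fm$ is proper (it omits $1$, since $1\notin\fm_n$ for any $n$), and that any proper ideal of $A$ consists of non-units and therefore sits inside $\fm$, confirming maximality and uniqueness.

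I expect the main obstacle to be the careful bookkeeping in the converse direction, where the locality hypothesis on the transition maps is genuinely needed: it is essential that $\phi_{n\ell}^{-1}(\fm_\ell)=\fm_n$ rather than merely $\phi_{n\ell}(\fm_n)\subseteq\fm_\ell$, and one must track the element through several indices $k,\ell$ in the directed system before the locality condition can be applied. Without the local-homomorphism assumption the statement is false (the colimit of local rings along arbitrary maps need not be local), so the proof must visibly invoke it exactly at the point where a unit in $A_\ell$ is pulled back to deduce $a_n\notin\fm_n$.
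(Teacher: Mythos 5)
Your proof is correct and follows essentially the same route as the paper's (much terser) argument: an element of $A\setminus\fm$ is represented by some $a_i\in A_i\setminus\fm_i$, hence is already a unit at a finite stage, and locality of the transition maps is what makes $\fm=\dirlim\fm_n$ a well-defined proper ideal; your extra converse direction (a unit of $A$ has a unit representative, pulled back via locality) is a harmless elaboration of what the paper leaves implicit. One correction to your closing remarks: for local rings the conditions $\phi_{n\ell}(\fm_n)\subseteq\fm_\ell$ and $\phi_{n\ell}^{-1}(\fm_\ell)=\fm_n$ are equivalent (the preimage of $\fm_\ell$ is a proper ideal of $A_n$, hence automatically contained in $\fm_n$), and your converse step in fact uses only the inclusion in contrapositive form, so the contrast you draw between the two is vacuous. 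Moreover, a filtered colimit of local rings along arbitrary, not necessarily local, homomorphisms is still local --- the non-units of $A$ always form an ideal, by the same push-to-a-common-index argument, since a unit representative and a sum of representatives lying in the maximal ideals would have to agree at some finite level --- so what fails without the locality hypothesis is not locality of $A$ but the identification of its maximal ideal with $\dirlim\fm_n$, which is then not even defined.
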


\noindent
\proof Let $\fm=\dirlim\fm_n$ and $a\in A\setminus \fm$.  
Abusing notation, we also denote by $A_{n}$ and $\fm_{n}$, the images of $A_{n}$ 
and $\fm_{n}$ in $\dirlim A_{n}$ respectively.  It follows that $a\in A_{i}\setminus \fm_{i}$ for some $i$, whence $a\in A$ is a unit.  \qed







\subsection{Tangent Spaces to Provarieties}\label{s:tangent}  
Let $(X,\OX)$ be a provariety.  Since $(X,\OX)$ is a locally ringed space, we can define 
the Zariski tangent space $T_{x}(X)$ of a point $x\in X$ as:
\begin{equation}\label{eq:tangentiso}
T_{x}(X):=(\fm_{x}/\fm_{x}^{2})^{*},
\end{equation}
where $(\fm_{x}/\fm_{x}^{2})^{*}$ is the dual of the infinite dimensional $F$-vector space $\fm_{x}/\fm_{x}^{2}$. 
It is easy to see that $T_{x}(X)$ can be identified with the space of all $F$-linear point derivations of the $F$-algebra
$\stalk$ at the point $x\in X$.  The $F$-vector space $T_{x}(X)$ is also an inverse limit.  


\begin{thm}\label{thm:tangentspace}
Let $\{(X_{n},p_{nm})\}$ be an inverse system of varieties, and let $X=\invlim X_{n}$ be the corresponding provariety.  There is a canonical isomorphism 
of $F$-vector spaces: 
\begin{equation}\label{eq:2ndtaniso}
T_{x}(X)\cong \invlim T_{x_{n}}(X_{n}),
\end{equation}
where $x_{n}=p_{n}(x)$ for each $x\in X$.  That is, the following diagram commutes:

\begin{equation}\label{eq:naturality}
\begin{array}{ccc}
\invlim T_{x_{n}}( X_{n}) &\cong & T_{x}(X) \\
\downarrow{\pi_{k}}  &   & \downarrow (dp_{k})_{x} \\
T_{x_{k}}(X_{k}) & = & T_{x_{k}}(X_{k}),
\end{array}
\end{equation}
where $\pi_{k}:\invlim T_{x_{n}}(X_{n})\to T_{x_{k}}(X_{k})$ is the canonical projection.
\end{thm}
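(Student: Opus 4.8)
The plan is to work with the realization of $T_x(X)$ as the space of $F$-linear point derivations of the stalk $\stalk$ (noted just before the statement) together with the filtered direct limit description $\stalk\cong\dirlim\mathcal{O}_{X_n,x_n}$ from Equation \ref{eq:stalks}. Write $A_n=\mathcal{O}_{X_n,x_n}$ with transition homomorphisms $\phi_{nm}=p_{nm}^{*}\colon A_m\to A_n$ for $n\geq m$, and let $\iota_n\colon A_n\to\stalk$ be the canonical map into the direct limit, so that $\iota_m=\iota_n\circ\phi_{nm}$ and $\operatorname{ev}_x\circ\iota_n=\operatorname{ev}_{x_n}$. Under the identification $T_{x_n}(X_n)=\operatorname{Der}_{x_n}(A_n,F)$, the differential $(dp_{nm})_{x_n}$ is the map $v\mapsto v\circ\phi_{nm}$, so the spaces $T_{x_n}(X_n)$ form an inverse system and $\invlim T_{x_n}(X_n)$ is well defined. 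The conceptual content is then the general principle that taking $F$-valued point derivations turns a filtered direct limit of $F$-algebras into the inverse limit of the associated derivation spaces.

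First I would define the forward map $\Phi\colon T_x(X)\to\invlim T_{x_n}(X_n)$ by $\Phi(D)=(D\circ\iota_n)_n$. Each $D\circ\iota_n$ is a point derivation of $A_n$ at $x_n$, since $\iota_n$ is an $F$-algebra homomorphism intertwining the two evaluation maps, and the resulting family is compatible with the transition maps because $\iota_m=\iota_n\circ\phi_{nm}$; hence $\Phi(D)$ genuinely lands in the inverse limit. For the inverse $\Psi$, given a compatible family $(D_n)_n$, I would invoke the universal property of the direct limit of $F$-vector spaces to glue the $D_n$ into a single $F$-linear functional $D\colon\stalk\to F$. To see that $D$ is a point derivation, I would use that the index set is directed: any pair $f,g\in\stalk$ lifts to a common stage $A_n$, so the Leibniz identity for $D$ at $x$ reduces to the Leibniz identity for $D_n$ at $x_n$, which holds by hypothesis.

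Next I would check that $\Phi$ and $\Psi$ are mutually inverse $F$-linear maps; this is a direct diagram chase using $\iota_m=\iota_n\circ\phi_{nm}$ and the definition of the gluing. Naturality, i.e.\ commutativity of \ref{eq:naturality}, is then automatic from the construction: the projection $\pi_k$ sends a compatible family to its $k$-th component $D_k$, whereas $(dp_k)_x$ sends $D$ to $D\circ\iota_k=D\circ p_k^{*}$, and by the very definition of $\Phi$ these coincide. (As an equivalent linear-algebra route, one could instead combine Proposition \ref{p:locrings}, exactness of filtered direct limits to obtain $\fm_x/\fm_x^{2}\cong\dirlim\bigl(\fm_{x_n}/\fm_{x_n}^{2}\bigr)$, and the duality $(\dirlim V_n)^{*}\cong\invlim V_n^{*}$; I prefer the derivation formulation since the Leibniz bookkeeping is more transparent.)

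I expect the main obstacle to be organizational rather than deep: one must carefully verify that the evaluation maps, the multiplication on $\stalk$, and the transition homomorphisms are all simultaneously compatible with the limit structures, so that the Leibniz condition both descends to and ascends from the finite stages. The feature that makes every step go through is that the directed system is filtered, so any finite collection of germs in $\stalk$ already lives at a single finite level $X_n$, reducing each identity to be checked to a statement about the ordinary finite-dimensional tangent space $T_{x_n}(X_n)$.
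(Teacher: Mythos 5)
Your proposal is correct, but it follows a genuinely different route from the paper's. The paper works entirely on the cotangent side: starting from the definition $T_{x}(X)=(\fm_{x}/\fm_{x}^{2})^{*}$ in (\ref{eq:tangentiso}), it uses Proposition \ref{p:locrings} to write $\fm_{x}=\dirlim p_{n}^{*}\fm_{x_{n}}$, proves as its key computational step that $\fm_{x}^{2}=\dirlim p_{n}^{*}(\fm_{x_{n}}^{2})$ (by lifting each finite sum $\sum (p_{n}^{*}f_{n})(p_{m}^{*}g_{m})$ to a single common stage $\gamma$), and then chains the isomorphisms $\invlim(\fm_{x_{n}}/\fm_{x_{n}}^{2})^{*}\cong(\dirlim(\fm_{x_{n}}/\fm_{x_{n}}^{2}))^{*}\cong(\dirlim\fm_{x_{n}}/\dirlim\fm_{x_{n}}^{2})^{*}\cong(\fm_{x}/\fm_{x}^{2})^{*}$, using exactness of filtered colimits and the duality $(\dirlim V_{n})^{*}\cong\invlim V_{n}^{*}$; commutativity of (\ref{eq:naturality}) is then left as ``a simple computation.'' You instead work with the point-derivation model of $T_{x}(X)$, which the paper asserts without proof just before the theorem, so you are entitled to it; your filtered-colimit Leibniz argument (lifting any pair of germs to a common stage $A_{n}$) plays exactly the role of the paper's $\fm_{x}^{2}$ lemma---both devices control products of germs, one multiplicatively on ideals, the other via the derivation axiom. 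Your route, which is the derivation-theoretic dual of the paper's argument (as your own parenthetical observes), buys a cleaner naturality statement: under your $\Phi$, the projection $\pi_{k}$ is literally precomposition with $\iota_{k}=p_{k}^{*}$, i.e.\ $(dp_{k})_{x}$, so Diagram (\ref{eq:naturality}) commutes essentially by definition rather than by an omitted verification. The paper's route, in exchange, never needs to check that derivations glue or that Leibniz passes through the limit, at the cost of the explicit $\fm_{x}^{2}=\dirlim p_{n}^{*}(\fm_{x_{n}}^{2})$ computation, which your approach sidesteps. One small point to make explicit in a final write-up: under the derivation identification, the differential of a morphism of locally ringed spaces is precomposition with the comorphism on stalks; you assert $(dp_{nm})_{x_{n}}(v)=v\circ\phi_{nm}$, which is standard and consistent with the paper's conventions, but since the paper defines differentials via the dual of the map on $\fm/\fm^{2}$, a one-line reconciliation of the two descriptions would close the only loose end.
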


\noindent
\proof
Let $x\in X$, and let $x_{n}=p_{n}(x)$ for $n\in\mathbb{N}$.  
The inverse system $\{(X_{n}, p_{nm})\}$ gives rise to an inverse 
system $\{T_{x_{n}}(X_{n}), (dp_{nm})_{x_{n}}\}$.  We can then form the inverse
limit $\invlim T_{x_{n}}(X_{n}).$

By Proposition \ref{p:locrings}, $\fm_{x}=\dirlim p_{n}^{*}\fm_{x_{n}}$, where $\fm_{x_{n}}\subset\mathcal{O}_{X_{n},x_{n}}$ is the unique maximal ideal of $\mathcal{O}_{X_{n},x_n}$.  It follows that $\fm_{x}^{2}=\dirlim p_n^*(\mathfrak{m}_{x_n}^2)$.  Indeed, suppose that $f\in\fm_{x}^{2}$.  Then $f$ is a finite sum $f=\sum_{n,m} (p_{n}^{*} f_{n})(p_{m}^{*} g_{m})$, with $f_{n}\in\fm_{x_{n}}$ and $g_{m}\in\fm_{x_{m}}$.  If we let $\gamma$ be the maximum over all indices $n$ and $m$ appearing in this sum, then $f=\displaystyle\sum_{\mbox{\tiny finite}} p_{\gamma}^{*}( f_{\gamma} g_{\gamma})$, where $f_{\gamma}=p_{\gamma n}^{*} f_{n}$ and $g_{\gamma}=p_{\gamma m}^{*} g_{m}$.  But then $f\in\dirlim p_{n}^{*}(\fm_{x_{n}}^{2})$.  It is easy to see that this argument is independent of the choice of indices used to represent $f$.  Thus, $\mathfrak{m}_{x}^{2}\subseteq \dirlim p_{n}^{*}(\mathfrak{m}_{x_{n}}^{2})$ and the other inclusion is clear.  Therefore,
\begin{equation}\label{eq:tanisos}
\begin{split}
\invlim T_{x_{n}} (X_{n})&=\invlim (\fm_{x_{n}}/\fm_{x_{n}}^{2})^{*}\\
&\cong(\dirlim(\fm_{x_{n}}/\fm_{x_{n}}^{2}))^{*}\\
&\cong (\dirlim\fm_{x_{n}}/ \dirlim\fm_{x_{n}}^{2})^{*}\\
&\cong(\fm_{x}/\fm_{x}^{2})^{*}\\
&=T_{x}(X).
\end{split}
\end{equation}
The commutativity of Diagram (\ref{eq:naturality}) now follows from a simple computation.  \qed

\begin{rem}\label{r:isaprovariety}
If the transition maps $p_{nm}$ are assumed to be surjective submersions for all $n,\, m$, then $T_{x}(X)=\invlim T_{x_{n}}(X_{n})$ has the structure 
of a provariety as in Section \ref{s:provarieties}.  

\end{rem}

\begin{dfn}\label{d:vecfield}
We call a derivation of the sheaf of $F$-algebras $\mathcal{O}_{X}$, $\xi:\OX\to\OX$ a (global) vector field on $X$.  
It follows from definitions that for each $x\in X$, $\xi$ induces a point derivation of the stalk
$\xi_{x}:\stalk\to F$, so that for all $x\in X$, $\xi_{x}\in T_{x}(X)$. 
\end{dfn}

\begin{dfn}\label{dfn:cotangent}
For $x\in X=\invlim X_{n}$, we define the cotangent space at $x$ to be
$$T_{x}^{*}(X):=\dirlim  T_{x_{n}}^{*} (X_{n}),$$ 
where $T_{x_{n}}^{*}(X_{n})$ is the contangent space at $x_{n}=p_{n}(x)$ of $X_{n}$.   
Observe that $(T^{*}_{x}(X))^{*}=T_{x}(X)$ by Theorem \ref{thm:tangentspace}.  
\end{dfn}

\subsection{Morphisms of Provarieties}
In this section, we show that the provariety constructed in 
Section \ref{s:provarieties} is an inverse limit in the category of locally 
ringed spaces.  We first observe that the canonical projection maps: $p_{k}:X=\invlim X_{n}\to X_{k}$ are morphisms of locally ringed spaces with differential $dp_{k}=\pi_{k}: T(X)= \invlim T(X_{n})\to T(X_{k})$.  (See (\ref{eq:naturality})).  This follows immediately from Equation \ref{eq:Bpresheaf}, Proposition \ref{p:locrings}, and Theorem \ref{thm:tangentspace}.  We now examine morphisms in the category of provarieties in more detail.  We begin with the following lemma.
  
\begin{lem}\label{lem:sheaf}
Let $X$ be a topological space, and let $\mathcal{B}$ be a basis for the topology on $X$.  Let $\mathcal{F},\, \mathcal{G}$ be 
sheaves of $F$-algebras on $X$.  Suppose that for any $U\in\mathcal{B}$, we have a homomorphism of $F$-algebras:
$$
\Phi_{U}: \mathcal{F}(U)\to \mathcal{G}(U),
$$
such that if $W\subseteq U$ with $W\in\mathcal{B}$, then the following diagram commutes:
\begin{equation}\label{eq:sheafdiag}
\xymatrixcolsep{75pt}\xymatrix{
\mathcal{F}(U) \ar[r]^{\Phi_{U}}\ar[d]^{\rho_{UW}^{\mathcal{F}}}& \mathcal{G}(U)\ar[d]^{\rho_{UW}^{\mathcal{G}}}\\
\mathcal{F}(W)\ar[r]^{\Phi_{W}}& \mathcal{G}(W).}
\end{equation}
(i.e. $\Phi$ is a morphism of the $\mathcal{B}$-presheaves associated to the sheaves 
$\mathcal{F}$ and $\mathcal{G}$.) Then $\Phi$ lifts to a morphism of sheaves $\tilde{\Phi}:\mathcal{F}\to\mathcal{G}$ 
such that $\widetilde{\Phi}_{U}=\Phi_{U}$ for $U\in\mathcal{B}$.  
\end{lem}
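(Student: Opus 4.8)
The plan is to define the extension $\tilde{\Phi}_U$ on sections over an arbitrary open set $U\subseteq X$ by gluing together the images under $\Phi_W$ of the restrictions of a section to basic open subsets $W\in\mathcal{B}$, and then to verify that this assignment is a well-defined $F$-algebra homomorphism compatible with restriction. The guiding principle is that, since $\mathcal{F}$ and $\mathcal{G}$ are already sheaves, a section over $U$ is completely determined by its restrictions to the basic open sets $W\in\mathcal{B}$ contained in $U$; so all the verifications will reduce to checking an identity on a basis and invoking the separation axiom for $\mathcal{G}$.

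First I would fix an open set $U$ and a section $s\in\mathcal{F}(U)$, choose a cover $U=\bigcup_{i\in I}U_i$ by basic open sets $U_i\in\mathcal{B}$, and consider the local sections $\Phi_{U_i}(\rho^{\mathcal{F}}_{U,U_i}(s))\in\mathcal{G}(U_i)$. To glue these I must check they agree on the overlaps $U_i\cap U_j$, and this is the main technical point of the argument: the intersection $U_i\cap U_j$ need not itself lie in $\mathcal{B}$, so the commuting square (\ref{eq:sheafdiag}) does not apply to it directly. To get around this I would cover $U_i\cap U_j$ by basic open sets $W\in\mathcal{B}$ with $W\subseteq U_i\cap U_j$; for each such $W$ both $W\subseteq U_i$ and $W\subseteq U_j$ are inclusions of basic sets, so (\ref{eq:sheafdiag}) yields
$$
\rho^{\mathcal{G}}_{U_i,W}\bigl(\Phi_{U_i}(\rho^{\mathcal{F}}_{U,U_i}(s))\bigr)=\Phi_W(\rho^{\mathcal{F}}_{U,W}(s))=\rho^{\mathcal{G}}_{U_j,W}\bigl(\Phi_{U_j}(\rho^{\mathcal{F}}_{U,U_j}(s))\bigr).
$$
Since such $W$ cover $U_i\cap U_j$, the separation axiom for $\mathcal{G}$ forces $\Phi_{U_i}(\rho^{\mathcal{F}}_{U,U_i}(s))$ and $\Phi_{U_j}(\rho^{\mathcal{F}}_{U,U_j}(s))$ to have equal restrictions to $U_i\cap U_j$. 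The gluing axiom for $\mathcal{G}$ then produces a unique $t\in\mathcal{G}(U)$ with $\rho^{\mathcal{G}}_{U,U_i}(t)=\Phi_{U_i}(\rho^{\mathcal{F}}_{U,U_i}(s))$, and I define $\tilde{\Phi}_U(s):=t$.

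The cleanest way to organize the remaining checks is to record the characterization that $\tilde{\Phi}_U(s)$ is the unique element $t\in\mathcal{G}(U)$ satisfying $\rho^{\mathcal{G}}_{U,W}(t)=\Phi_W(\rho^{\mathcal{F}}_{U,W}(s))$ for every basic $W\subseteq U$. This is immediate from the construction together with separation, and it shows at once that $\tilde{\Phi}_U$ does not depend on the chosen cover. Each required property then follows by checking it on basic sets and applying separation for $\mathcal{G}$: $\tilde{\Phi}_U$ is an $F$-algebra homomorphism because every $\Phi_W$ and every restriction map is one; $\tilde{\Phi}$ commutes with restriction to a smaller open $U'\subseteq U$ because both $\rho^{\mathcal{G}}_{U,U'}\circ\tilde{\Phi}_U$ and $\tilde{\Phi}_{U'}\circ\rho^{\mathcal{F}}_{U,U'}$ satisfy the same characterizing identity on basic $W\subseteq U'$; and $\tilde{\Phi}_U=\Phi_U$ for $U\in\mathcal{B}$ because $\Phi_U(s)$ already satisfies that identity by (\ref{eq:sheafdiag}). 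Hence $\tilde{\Phi}=(\tilde{\Phi}_U)_U$ is a morphism of sheaves lifting the given $\Phi$ on $\mathcal{B}$. I expect the only genuine obstacle to be the overlap compatibility handled above; everything else is a routine ``check on a basis and apply separation'' once the characterization of $\tilde{\Phi}_U(s)$ is in place.
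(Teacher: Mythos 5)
Your proof is correct, but it takes a more hands-on route than the paper. The paper's proof is a two-line categorical argument: since $\mathcal{F}$ and $\mathcal{G}$ are sheaves, $\mathcal{F}(V)\cong \varprojlim_{U\subseteq V,\,U\in\mathcal{B}}\mathcal{F}(U)$ and $\mathcal{G}(V)\cong \varprojlim_{U\subseteq V,\,U\in\mathcal{B}}\mathcal{G}(U)$, and the commuting squares (\ref{eq:sheafdiag}) make the maps $\Phi_{U}\circ\rho^{\mathcal{F}}_{VU}$ a compatible cone, so the universal property of the inverse limit yields $\widetilde{\Phi}_{V}$ at once; the remaining verifications are declared easy. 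You instead construct $\widetilde{\Phi}_{U}(s)$ by direct gluing in $\mathcal{G}$, and your key step --- handling the fact that $U_{i}\cap U_{j}$ need not be basic by covering it with basic sets $W\subseteq U_{i}\cap U_{j}$ and invoking separation --- is precisely the verification that is swept under the rug in the paper's appeal to $\mathcal{G}(V)\cong\varprojlim\mathcal{G}(U)$: that isomorphism holds exactly because compatible families indexed by basic opens (where compatibility is only imposed along nested pairs $W\subseteq U$ of basic sets) do glue, which requires your overlap argument. So the two proofs are mathematically equivalent, but yours makes explicit what the universal-property one-liner hides, and your uniqueness characterization of $\widetilde{\Phi}_{U}(s)$ (the unique $t$ with $t|_{W}=\Phi_{W}(s|_{W})$ for all basic $W\subseteq U$) is a clean device that disposes of the cover-independence, homomorphism, restriction-compatibility, and $\widetilde{\Phi}_{U}=\Phi_{U}$ checks uniformly --- checks the paper leaves to the reader. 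The paper's version buys brevity and transparency of structure; yours buys self-containedness and actually verifies the limit identification.
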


\noindent
\begin{proof}
Let $V\subseteq X$ be open.  Then since $\mathcal{F}$ and $\mathcal{G}$ are sheaves of $F$-algebras,
$$
\mathcal{F}(V)\cong \invlim_{U\subseteq V, U\in\B} \mathcal{F}(U)\mbox{ and }\mathcal{G}(V)\cong \invlim_{U\subseteq V, U\in\B} \mathcal{G}(U).
$$
 Since the diagram in (\ref{eq:sheafdiag}) 
is commutative, the universal property of inverse limits gives a morphism: 
$$
\widetilde{\Phi}_{U}:=\invlim_{U\subseteq V} \Phi_{U}: \mathcal{F}(V)\to\mathcal{G}(V).  
$$
It is easy to see that $\tilde{\Phi}$ is a morphism of sheaves with the desired property. 
\qed
\end{proof}

We now state and prove the main result of this section.  
\begin{prop}\label{prop:provariety}
Let $\{(X_{n},p_{nk})\}$ be an inverse system of varieties with surjective transition morphisms, and let $\mathcal{O}_{X_{n}}$ be the structure sheaf of $X_{n}$.
Let $(X=\invlim X_{n}, \mathcal{O}_{X})$ be the corresponding provariety.
Let $(Y,\mathcal{O}_{Y})$ be a locally ringed space.  Suppose we are given morphisms of locally ringed spaces $\{f_{n}\}_{n\in \mathbb{N}}$ with $f_{n}:(Y,\mathcal{O}_{Y})\to (X_{n},\mathcal{O}_{X_{n}})$ such that for any $m\geq n$ the following diagram 
commutes.
\begin{equation}\label{eq:prodiagram1}
\xymatrix{
(Y,\mathcal{O}_{Y})\ar[d]^{f_{n}}\ar[dr]^{f_m}&\\
(X_{n},\mathcal{O}_{X_{n}})& (X_{m},\mathcal{O}_{X_{m}})\ar[l]^{p_{mn}}.
}
\end{equation}
Then the map $f:=\invlim f_{n}$ is a morphism of locally ringed spaces
$$
f:(Y,\mathcal{O}_{Y})\to (X,\mathcal{O}_{X}).  
$$
Moreover, for any $y\in Y$, the differential $(df)_{y}: T_{y}(Y)\to T_{f(y)}(X)$ is given by:
\begin{equation}\label{eq:prodiff}
(df)_{y}=\invlim (df_{n})_{y}.
\end{equation}
\end{prop}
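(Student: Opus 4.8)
The plan is to build $f$ as a morphism of locally ringed spaces in three stages---the underlying continuous map, the comorphism on structure sheaves, and the local-homomorphism condition on stalks---and then to read off the differential from the naturality already recorded in Theorem \ref{thm:tangentspace}. For the underlying map, recall that $X=\invlim X_{n}$ carries the inverse-limit topology and that the $f_{n}$ satisfy $p_{mn}\circ f_{m}=f_{n}$ for $m\geq n$. The universal property of the inverse limit in topological spaces then yields a unique map $f=\invlim f_{n}\colon Y\to X$ with $p_{n}\circ f=f_{n}$ for all $n$, and continuity is immediate on the basis $\B$: for $U=p_{n}^{-1}(U_{n})$ one has $f^{-1}(U)=f_{n}^{-1}(U_{n})$, which is open because $f_{n}$ is continuous.

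Next I would construct the comorphism $f^{*}\colon\OX\to f_{*}\mathcal{O}_{Y}$ by gluing the comorphisms $f_{n}^{*}\colon\mathcal{O}_{X_{n}}\to (f_{n})_{*}\mathcal{O}_{Y}$ via Lemma \ref{lem:sheaf}, applied with $\mathcal{F}=\OX$ and $\mathcal{G}=f_{*}\mathcal{O}_{Y}$ (both sheaves of $F$-algebras on $X$). For a basic open $U=p_{n}^{-1}(U_{n})$, a section of $\OX(U)$ is represented by $p_{k}^{*}g_{k}$ with $g_{k}\in\mathcal{O}_{X_{k}}(p_{kn}^{-1}(U_{n}))$ and $k\geq n$, as in (\ref{eq:Bpresheaf}); I would set
\begin{equation*}
\Phi_{U}(p_{k}^{*}g_{k}):=f_{k}^{*}g_{k}\in\mathcal{O}_{Y}\big(f_{k}^{-1}(p_{kn}^{-1}(U_{n}))\big)=\mathcal{O}_{Y}(f^{-1}(U)),
\end{equation*}
the last equality holding because $p_{kn}\circ f_{k}=f_{n}$ forces $f_{k}^{-1}(p_{kn}^{-1}(U_{n}))=f_{n}^{-1}(U_{n})=f^{-1}(U)$. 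Well-definedness on the direct limit is precisely the compatibility $f_{k}^{*}=f_{\ell}^{*}\circ p_{\ell k}^{*}$, obtained by applying $(\cdot)^{*}$ to $p_{\ell k}\circ f_{\ell}=f_{k}$, which matches the transition maps of (\ref{eq:Bpresheaf}); the commutation of $\Phi$ with the restriction maps $\rho_{UV}$ reduces to the same compatibility together with functoriality of restriction. Lemma \ref{lem:sheaf} then lifts $\{\Phi_{U}\}_{U\in\B}$ to the desired morphism of sheaves $f^{*}$.

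It remains to check that $(f,f^{*})$ is a morphism of \emph{locally} ringed spaces and to identify the differential. By (\ref{eq:stalks}) the induced stalk map at $y\in Y$ is $\dirlim (f_{n}^{*})_{f_{n}(y)}\colon\mathcal{O}_{X,f(y)}=\dirlim\mathcal{O}_{X_{n},f_{n}(y)}\to\mathcal{O}_{Y,y}$; since Proposition \ref{p:locrings} gives $\fm_{f(y)}=\dirlim p_{n}^{*}\fm_{f_{n}(y)}$ and each $f_{n}$ is a morphism of locally ringed spaces, so that each $(f_{n}^{*})_{f_{n}(y)}$ carries $\fm_{f_{n}(y)}$ into $\fm_{y}$, this direct-limit map is local. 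Finally, differentiating $p_{n}\circ f=f_{n}$ and using $(dp_{n})_{f(y)}=\pi_{n}$ from (\ref{eq:naturality}) gives $\pi_{n}\circ (df)_{y}=(df_{n})_{y}$ for every $n$; the universal property of $T_{f(y)}(X)=\invlim T_{f_{n}(y)}(X_{n})$ from Theorem \ref{thm:tangentspace} then yields $(df)_{y}=\invlim (df_{n})_{y}$, which is exactly (\ref{eq:prodiff}).

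I expect the main obstacle to be the sheaf-theoretic bookkeeping in the middle step: verifying that $\Phi_{U}$ is well defined on the direct-limit description of $\OX(U)$ and that it intertwines the restriction maps $\rho_{UV}$, so that Lemma \ref{lem:sheaf} genuinely applies. Everything else---continuity, the locality of the stalk maps, and the computation of the differential---follows formally from universal properties together with the identifications already established in Proposition \ref{p:locrings} and Theorem \ref{thm:tangentspace}.
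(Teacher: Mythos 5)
Your proposal is correct and follows essentially the same route as the paper's proof: the universal property of inverse limits for the continuous map, Lemma \ref{lem:sheaf} applied to the basic-open comorphisms (your $\Phi_{U}(p_{k}^{*}g_{k})=f_{k}^{*}g_{k}$ is exactly the paper's $\dirlim_{m\geq n}\tilde{f_{m}}$ with $\tilde{f_{m}}=f_{m}^{\sharp}\circ(p_{m}^{*})^{-1}$ written out elementwise), Proposition \ref{p:locrings} for locality, and Theorem \ref{thm:tangentspace} for the differential. If anything, you are slightly more explicit than the paper on the stalk-level locality check and on deriving $(df)_{y}$ from $\pi_{n}\circ(df)_{y}=(df_{n})_{y}$, steps the paper compresses into citations.
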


\noindent
\begin{proof}  Since the diagram in (\ref{eq:prodiagram1}) is commutative, the universal property of inverse limits gives 
us a map of sets $f:=\invlim f_{n}: Y\to \invlim X_{n}=X$.  Since $X$ has the inverse limit topology, 
it follows that $f$ is continuous.  

We claim that $f$ induces a morphism of sheaves of $F$-algebras on $X$, $f^{\sharp}:\mathcal{O}_{X}\to f_{*}\mathcal{O}_{Y}$. 
To show this, we use Lemma \ref{lem:sheaf}.  Let $U\subseteq X$ be a basic open set.  Then $U=p_{n}^{-1}(U_{n})$ for some open set $U_{n}\subseteq X_{n}$.  Since the comorphism $p_{n}^{*}: \mathcal{O}_{X_{n}}\to (p_n)_*\mathcal{O}_{X}$ is injective for all $n$,
the commutativity of Diagram (\ref{eq:prodiagram1}) implies that the following diagram is also commutative:
\begin{equation}\label{eq:fourthprodiag}
\xymatrixcolsep{75pt}\xymatrix{
 &\mathcal{O}_{Y}(f_{n}^{-1}(U_{n}))  \\
p_{n}^{*}\mathcal{O}_{X_{n}}(p_{n}^{-1}(U_{n}))\ar[ur]^{{f_{n}^{\sharp}\circ (p_{n}^{*})^{-1}}}\ar[r]^{p_{m}^{*}\circ p_{mn}^{*}\circ(p_{n}^{*})^{-1}}&  p_{m}^{*}\mathcal{O}_{X_{m}}(p_{mn}^{-1}(U_{n}))\ar[u]_{f_{m}^{\sharp}\circ(p_{m}^{*})^{-1}}.}
\end{equation}
For ease of notation, let $\tilde{f_{m}}:=f_{m}^{\sharp}\circ (p_{m}^{*})^{-1}$ for each $m\in \mathbb{N}$. 
It follows from Diagram (\ref{eq:fourthprodiag}) and the universal property of direct limits that 
\begin{equation}\label{eq:basic}
\dirlim_{m\geq n}\tilde{f_{m}}:\OX(U)=\dirlim_{m\geq n} p_{m}^{*} \mathcal{O}_{X_{m}}(p_{mn}^{-1}(U_{n}))\to \mathcal{O}_{Y}(f_{n}^{-1}(U_{n}))=f_{*}\mathcal{O}_{Y}(p_{n}^{-1}(U_{n}))
\end{equation}
is a homomorphism of $F$-algebras.  It is easy to see that this homomorphism is compatible with restriction maps of the $\B$-presheaf $\wOX$.  Thus, by Lemma \ref{lem:sheaf} we obtain a morphism of sheaves of $F$-algebras:
$$
f^{\sharp}: \mathcal{O}_{X}\to f_{*}\mathcal{O}_{Y}.
$$
Since the maps $f_{n}$ are morphisms of locally ringed spaces, it follows from Proposition \ref{p:locrings} that $(f,f^{\sharp})$ is a morphism of locally ringed spaces.


We now compute the differential of $f$.  The diagram in (\ref{eq:prodiagram1}) gives rise to a commutative diagram
\begin{equation}\label{eq:diffdiag}
\xymatrixcolsep{65pt}\xymatrix{
T_{y}(Y)\ar[d]^{(df_{n})_{y}}  \ar[dr]^{ (df_{m})_{y} }\\
T_{f_{n}(y)}(X_{n})&T_{f_{m}(y)}(X_{m})\ar[l]_{(dp_{mn})_{f_{m}(y)}}.}
\end{equation}
for any $y\in Y$.
It follows from Theorem \ref{thm:tangentspace} that $(df)_{y}=\invlim(df_{n})_{y}: T_{y}(Y)\to \invlim T_{f_{n}(y)}(X_{n})=T_{f(y)}(X)$.
\qed
\end{proof}

\begin{cor}\label{c:morphisms}
Let $\{(X_{n}, p_{nk})_{n\in \mathbb{N}}\}$ and $\{(Y_{n}, q_{nk})\}_{n\in \mathbb{N}}$ 
be inverse systems of varieties with surjective transition morphisms, and let $(X=\invlim X_{n},\OX)$ and 
$(Y=\invlim Y_{n},\mathcal{O}_{Y})$ be the corresponding provarieties.  
Suppose that for each $n\in \mathbb{N}$, we have morphisms 
$f_{n}: X_{n}\to Y_{n}$ such that for any $m\geq n$ the following diagram commutes:
\begin{equation}\label{eq:invdiag}
\xymatrix{
X_{m}\ar[d]_{p_{mn}}\ar[r]^{f_{m}}&  Y_{m}\ar[d]^{ q_{mn} }\\
X_{n}\ar[r]^{f_{n}}&Y_{n}.
}
\end{equation}
Then the map $f=\invlim f_{n}: (X,\OX)\to (Y,\mathcal{O}_{Y})$ is a morphism of locally ringed spaces
with differential: 
$$df=\invlim df_{n}: \invlim T(X_{n})\to \invlim T(Y_{n}).$$
\end{cor}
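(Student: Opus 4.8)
Corollary \ref{c:morphisms} is a direct consequence of Proposition \ref{prop:provariety}. The plan is to reduce the statement about two inverse systems to the hypotheses of that proposition by composing each given morphism $f_n\colon X_n\to Y_n$ with the canonical projection $p_n\colon X\to X_n$.

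First I would set $g_n:=f_n\circ p_n\colon X\to Y_n$ for each $n\in\mathbb{N}$. Since $p_n$ is a morphism of locally ringed spaces (as observed at the start of the subsection on morphisms) and $f_n$ is a morphism by hypothesis, each $g_n$ is a morphism of locally ringed spaces from $(X,\OX)$ to $(Y_n,\mathcal{O}_{Y_n})$. Next I would verify the compatibility condition required to invoke Proposition \ref{prop:provariety} with $(Y,\mathcal{O}_Y)$ there replaced by our $(X,\OX)$: namely that $q_{mn}\circ g_m=g_n$ for all $m\geq n$. This is where the commutativity of Diagram (\ref{eq:invdiag}) enters. Indeed, using $p_n=p_{mn}\circ p_m$ (the defining relation of the inverse system $\{X_n\}$) together with (\ref{eq:invdiag}), one computes
\begin{equation}\label{eq:cormorcompat}
q_{mn}\circ g_m=q_{mn}\circ f_m\circ p_m=f_n\circ p_{mn}\circ p_m=f_n\circ p_n=g_n,
\end{equation}
which is exactly the hypothesis (\ref{eq:prodiagram1}) of Proposition \ref{prop:provariety}.

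Having checked this, Proposition \ref{prop:provariety} (applied with the source locally ringed space taken to be $X$ itself) yields a morphism of locally ringed spaces $f:=\invlim g_n\colon (X,\OX)\to (Y,\mathcal{O}_Y)$. I would then identify this $\invlim g_n$ with the map $\invlim f_n$ claimed in the statement: on the level of sets, $f(x)=(g_n(x))_n=(f_n(p_n(x)))_n=(f_n(x_n))_n$ where $x_n=p_n(x)$, which is precisely the inverse limit map induced by the $f_n$. The formula for the differential follows from the second half of Proposition \ref{prop:provariety}, which gives $(df)_x=\invlim (dg_n)_x$; applying the chain rule and Theorem \ref{thm:tangentspace} to $dg_n=df_n\circ dp_n$ and using that $(dp_n)_x=\pi_n$ is the canonical projection, one recovers $df=\invlim df_n\colon \invlim T(X_n)\to \invlim T(Y_n)$.

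The only real content is the compatibility computation (\ref{eq:cormorcompat}); everything else is a mechanical application of the preceding proposition. I do not expect any genuine obstacle here, since the statement is essentially the functoriality of the inverse-limit construction in the category of locally ringed spaces, and Proposition \ref{prop:provariety} has already established the universal property that makes this functoriality automatic. The one point warranting a line of care is confirming that $\invlim g_n$ and $\invlim f_n$ denote the same map, so that both the morphism assertion and the differential formula are stated for the intended map; this is immediate from the factorization $g_n=f_n\circ p_n$.
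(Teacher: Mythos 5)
Your proposal is correct and follows essentially the same route as the paper's own proof, which likewise defines $\tilde{f_n}:=f_n\circ p_n:\invlim X_n\to Y_n$, checks the hypotheses of Proposition \ref{prop:provariety}, and identifies $\invlim \tilde{f_n}$ with $\invlim f_n$. Your version merely spells out the compatibility computation $q_{mn}\circ f_m\circ p_m=f_n\circ p_{mn}\circ p_m=f_n\circ p_n$ and the chain-rule step for the differential, which the paper leaves implicit.
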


\noindent
\begin{proof}
The hypotheses of the corollary imply that the maps 
$$\tilde{f_{n}}: \invlim X_{n}\stackrel{p_{n}}{\to} X_{n}\stackrel{f_{n}}{\to} Y_{n} $$
are morphisms of locally ringed spaces satisfying the conditions of Proposition \ref{prop:provariety}.
It then follows that $\invlim\tilde{f_{n}}=\invlim f_{n}$ is a morphism of locally ringed spaces with 
differential $\invlim d\tilde{f_{n}}=\invlim df_{n}$. 
\qed

\end{proof}

\begin{exam}\label{MPoisson}{\em 
For $n\in\mathbb{N}$, let $\fg_{n}$ be a finite dimensional Lie algebra over $\C$.  Suppose we have a chain 
\begin{equation}\label{eq:chain}
\fg_{1}\stackrel{j_{12}}{\to}\fg_{2}\stackrel{j_{23}}{\to}\fg_{3}\to\dots\to \fg_{n}\stackrel{j_{n,n+1}}{\to}\cdots,
\end{equation}
where $j_{n,n+1}:\fg_{n}\to \fg_{n+1}$ is an injective homomorphism of Lie algebras.  The direct limit $\fg:=\dirlim \fg_{n} $ is naturally a Lie 
algebra, and the full vector space dual $\fg^{*}=\invlim \fg_{n}^{*}$ is a provariety.   
For $\lambda\in\fg^{*}$, the tangent space at $\lambda$ is naturally the provariety:
$$
T_{\lambda}(\fg^{*})=\invlim  T_{\lambda_{n}}(\fg_{n}^{*})=\fg^{*} 
$$
by Theorem \ref{thm:tangentspace} and Remark \ref{r:isaprovariety}.  Similarly, we can identify the cotangent space at $\lambda\in\fg^{*}$ with the
Lie algebra $\fg$ as a vector space: 
\begin{equation}\label{eq:cotan}
T_{\lambda}^{*}(\fg^{*})=\dirlim T^{*}_{\lambda_{n}}(\fg_{n}^{*})=\dirlim (\fg_{n}^{*})^{*}\cong \fg.  
\end{equation}

Suppose that for each $n\in \mathbb{N}$, the Lie algebra $\fg_{n}$ is 
reductive with non-degenerate, associative form $\ll\cdot, \cdot\gg $.  
Then we can use the form $\ll \cdot, \cdot\gg $ to identify $\fg_{n}$ 
with $\fg_{n}^{*}$, giving the vector space 
\begin{equation}\label{eq:gtilde}
\tilde{\fg}:=\invlim\fg_{n}
\end{equation}
the structure of a provariety.  By Corollary \ref{c:morphisms}, 
$\tilde{\fg}\cong \fg^{*}$ as provarieties.  

In particular, consider the case where 
$\fg_{n}=\fgl(n,\C)$ is the Lie algebra of $n\times n$ complex matrices.  For $X\in\fg_n$, let $j_{n,n+1}(X)$ be the $(n+1)\times (n+1)$ matrix 
with $(j_{n,n+1}(X))_{kj}=X_{kj}$ for $k, \, j\in\{1,\dots, n\}$ and $(j_{n,n+1}(X))_{kj}=0$ otherwise.  Then $\fg=\fgl(\infty)$ is the Lie algebra
of infinite-by-infinite complex matrices with only finitely many non-zero entries.  Moreover, the Lie algebra $\fg_{n}$ is reductive with 
non-degenerate, associative form $\ll X,Y\gg =tr(XY)$, where $tr(\cdot)$ denotes the trace function.  Using the trace form, 
the map $j_{n,n+1}^{*}:\fg_{n+1}^{*}\to \fg_{n}^{*}$ is identified with the map $p_{n+1, n}:\fg_{n+1}\to \fg_{n}$, where
$p_{n+1,n}(X)=X_{n}$, and $X_{n}$ is the $n\times n$ submatrix in the upper left-hand corner of $X\in\fg_{n+1}$.  
We denote the dual space of $\fg$, $\tilde{\fg}$ defined in Equation (\ref{eq:gtilde}) as $M(\infty)$.  Thus,
\begin{equation}\label{eq:Minfty}
M(\infty):=\{(X(1), X(2), \dots, X(n), X(n+1), \dots ,) :\, X(n)\in\fg_{n}\mbox { and } X(n+1)_{n}=X(n)\}.
\end{equation}
The provariety $M(\infty)$ is naturally isomorphic to the vector space 
of infinite-by-infinite complex matrices with arbitrary entries. 

A similar construction works for any classical direct limit Lie algebra.  For example, if $\fg_{n}=\mathfrak{so}(n,\C)$ is the Lie algebra of 
$n\times n$ complex skew-symmetric matrices, then $\mathfrak{so}(\infty):=\dirlim \fg_{n}$ is the Lie algebra of infinite-by-infinite skew-symmetric matrices with only finitely many nonzero entries.  The dual space $\tilde{\fg}\cong \mathfrak{so}(\infty)^{*}$ is the provariety of infinite-by-infinite complex skew-symmetric matrices.

}
\end{exam}
We will see in the next section that the Lie-Poisson structure of $\fg_{n}^{*}$ has a natural generalization to the 
provariety $\fg^{*}=\invlim \fg_{n}^{*}$.

\subsection{Poisson Provarieties}\label{s:Poisson}

We briefly recall some basic definitions from Poisson geometry.  A variety $X$ is a {\em Poisson variety} if the structure sheaf $\OX$ is a sheaf
of Poisson algebras. That is to say that for each open subset $U\subseteq X$, $\OX(U)$ is a Poisson algebra and the restriction maps
$\rho_{UV}: \OX(U)\to \OX(V)$ are homomorphisms of Poisson algebras.  This is equivalent to specifiyng a regular bivector field $\pi\in \wedge^{2} TX$, whose Schouten-Nijenhuis bracket $[\pi,\pi]=0$.  We have the relation
\begin{equation}\label{eq:bivec}
\{f,g\}(x)= \pi_{x}(df_{x}, dg_{x}), 
\end{equation}
for $x\in X$ and $f,\, g\in\mathcal{O}_{X}(X)$, where $\{\cdot,\cdot\}$ denotes the Poisson bracket on $\mathcal{O}_{X}(X)$. 
 For a regular function $f\in \mathcal{O}_{X}(X)$, we define the Hamiltonian vector field $\xi_{f}$ by 
$$
\xi_{f}(g)=\{f,g\},  
$$
for $g\in\,\mathcal{O}_{X}(X)$.  The Poisson bivector $\pi$ defines a bundle map $\widetilde{\pi}: T^{*} (X)\to T(X)$, given by 
$$
\widetilde{\pi}(\lambda)(\mu)=\pi(\lambda,\mu), 
$$
for $\lambda,\mu\in T^{*}(X)$.  It follows from (\ref{eq:bivec}) that 
$\widetilde{\pi}(df)=\xi_{f}$.  We refer to $\widetilde{\pi}$ as the \emph{anchor map}.  


Given two Poisson varieties $(X_{1},\pi_{1})$, $(X_{2},\pi_{2})$, a morphism $\phi: X_{1}\to X_{2}$ is said to be \emph{Poisson} if the comorphism $\phi^{\sharp}: \mathcal{O}_{X_{2}}\to\phi_{*} \mathcal{O}_{X_{1}}$ is a morphism of sheaves of Poisson algebras.  In particular, we say that $(X_{1}, \pi_{1})\subset (X_{2}, \pi_{2})$ is a Poisson subvariety if the inclusion map $i: X_{1}\to X_{2}$ is Poisson. 

Let $(X_{n},  p_{nm})$ be an inverse system of varieties with surjective transition morphisms.  Suppose that each of the varieties $X_{n}$ is Poisson and the morphisms $p_{nm}: X_{n}\to X_{m}$ are Poisson.  We claim that the structure sheaf $\OX$ constructed in Section \ref{s:provarieties} is a sheaf of Poisson algebras.  
We begin with the following lemma whose proof is elementary.


\begin{lem}\label{l:Poisson}

\begin{enumerate}
\item Let $(A_{n},\phi_{nm})$ be a directed system of Poisson algebras.  That is, $A_{n}$ is a Poisson algebra for each $n$ and $\phi_{nm}: A_{n}\to A_{m}$  is a homomorphism of Poisson algebras for each $n\leq m$.  Then the direct limit $A=\dirlim A_{n}$ has a natural Poisson algebra structure and is a direct limit in the category of Poisson algebras.    
\item Let $(B_{n},\psi_{nm})$ be an inverse system of Poisson algebras.  Then $\invlim B_{n}$ has the structure of a Poisson algebra and is the inverse 
limit in the category of Poisson algebras.  
\end{enumerate}
\end{lem}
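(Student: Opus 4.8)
The plan is to prove both parts by reducing the statements about direct and inverse limits of Poisson algebras to the well-known universal properties of direct and inverse limits in the category of commutative $F$-algebras, and then checking that the Poisson bracket passes to the limit and satisfies the required identities. The key observation is that a Poisson algebra is a commutative algebra equipped with an extra bilinear operation (the bracket) satisfying antisymmetry, the Jacobi identity, and the Leibniz rule, so once the underlying algebra structure on the limit is in hand, constructing the bracket is essentially an exercise in compatibility.

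For Part (1), I would first recall that $A=\dirlim A_{n}$ already carries a natural commutative $F$-algebra structure, with each canonical map $\phi_n\colon A_n\to A$ a homomorphism. Every element of $A$ is represented by some $a_n\in A_n$, and two representatives $a_n, b_m$ agree in $A$ precisely when they have a common image further along the directed system. To define the bracket on $A$, given classes $[a]=\phi_n(a_n)$ and $[b]=\phi_m(b_m)$, I would choose $k\geq n,m$, push both elements into $A_k$ via the transition maps, and set $\{[a],[b]\}:=\phi_k(\{a_k,b_k\}_{A_k})$. The main point to verify is that this is well defined: because each $\phi_{nm}$ is a Poisson homomorphism, the brackets are preserved by the transition maps, so the definition is independent of the choice of $k$ and of the representatives. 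The antisymmetry, Jacobi, and Leibniz identities then hold for $[a],[b],[c]$ because they can all be checked after pushing the three elements into a single common $A_k$, where they hold by hypothesis. Finally, to see that $A$ is a direct limit in the category of Poisson algebras, I would invoke the universal property: given compatible Poisson homomorphisms $A_n\to P$, the induced algebra map $A\to P$ from the universal property in commutative algebras automatically respects brackets, again by checking on representatives.

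For Part (2), the argument is dual. The inverse limit $B=\invlim B_{n}$ carries a commutative $F$-algebra structure whose elements are compatible sequences $(b_n)$ with $\psi_{nm}(b_n)=b_m$. I would define the bracket componentwise, $\{(b_n),(c_n)\}:=(\{b_n,c_n\}_{B_n})$, and observe that this sequence is again compatible precisely because each $\psi_{nm}$ is a Poisson homomorphism, so it lands in $B$. The Poisson identities hold componentwise and therefore hold in $B$. The universal property in the category of Poisson algebras follows from the universal property in commutative algebras together with the fact that a map into $B$ respects brackets if and only if each of its components does.

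I do not expect a serious obstacle here, since the lemma is genuinely elementary and the authors say as much; the only point requiring care is the well-definedness of the bracket in Part (1), where one must use that the transition maps are Poisson homomorphisms to ensure independence of representatives, and the compatibility check in Part (2), where one must confirm the componentwise bracket of two compatible sequences is again compatible. Both reduce to the single fact that a Poisson homomorphism $\phi$ satisfies $\phi(\{x,y\})=\{\phi(x),\phi(y)\}$, which is exactly the hypothesis on the transition maps, so the entire proof is a matter of transporting the bracket through the limit and invoking the corresponding universal properties in the category of commutative $F$-algebras.
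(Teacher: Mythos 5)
Your proof is correct, and it is precisely the standard elementary argument the paper has in mind: the authors state Lemma \ref{l:Poisson} without proof, remarking only that it is elementary. Your careful attention to well-definedness of the bracket in Part (1) and to the compatibility of componentwise brackets in Part (2) — both resting on the transition maps being Poisson homomorphisms — fills in exactly the details the paper leaves to the reader.
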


\begin{dfn-prop}\label{dp:poisprovariety}  Let $(X_n,p_{nm})$ be an inverse system of Poisson varieties $X_n$, with surjective Poisson morphisms $p_{nm}$, and let $(X=\invlim X_{n}, \OX)$ be the 
corresponding provariety.  Then the structure sheaf $\OX$ constructed in Section \ref{s:provarieties} is a sheaf of Poisson algebras.  We call $X=\invlim X_n$ a {\em Poisson provariety}.
\end{dfn-prop}

\noindent
\begin{proof}
It follows from (\ref{eq:Bpresheaf}) and Part (1) of Lemma \ref{l:Poisson} that the 
$\B$-presheaf $\wOX$ is a $\B$-presheaf of Poisson algebras.  Part (2) of Lemma \ref{l:Poisson} and Equation \ref{eq:Xpresheaf}
then imply that the sheaf $\OX$ is a sheaf of Poisson algebras.  
\qed
\end{proof}

The following lemma will play an important role in the constructions that follow.

\begin{lem}\label{wedgelimlemma} Let $\{V_n,\phi_{nm}\}$ be a directed system of vector spaces.  Then for any $k\in\mathbb{N}$
 $$\left(\bigwedge\!\!{}^k\,\dirlim_{n} V_n\right)^*\cong \invlim_{n}\left[\left(\bigwedge\!\!{}^k\,V_n\right)^*\right].$$
\end{lem}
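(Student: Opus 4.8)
The plan is to exploit the fact that exterior powers commute with direct limits, together with the standard duality between direct and inverse limits of vector spaces. First I would observe that the functor $\bigwedge^k(-)$ is a covariant functor on vector spaces that preserves direct limits; that is, there is a canonical isomorphism $\bigwedge^k \dirlim_n V_n \cong \dirlim_n \bigwedge^k V_n$, the transition maps on the right being $\bigwedge^k \phi_{nm}$. This is because $\bigwedge^k$ is a left adjoint-type construction (more concretely, it is a quotient of the tensor power $V^{\otimes k}$, and both tensor powers and quotients commute with filtered colimits of vector spaces). I would state this as the first reduction, so that the problem becomes computing the dual of a direct limit.

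Next I would apply the elementary duality fact that for any directed system $\{W_n, \psi_{nm}\}$ of vector spaces, one has a natural isomorphism
\begin{equation*}
\left(\dirlim_n W_n\right)^* \cong \invlim_n \left(W_n^*\right),
\end{equation*}
where the inverse system on the right has transition maps $\psi_{nm}^*$ (the algebraic duals). This is simply the statement that $\Hom_F(-, F)$ sends colimits to limits, which follows from the universal property: a linear functional on $\dirlim_n W_n$ is exactly a compatible family of functionals on the $W_n$, and such a compatible family is precisely an element of $\invlim_n W_n^*$. Taking $W_n = \bigwedge^k V_n$ and combining with the first isomorphism yields
\begin{equation*}
\left(\bigwedge\!\!{}^k\,\dirlim_n V_n\right)^* \cong \left(\dirlim_n \bigwedge\!\!{}^k\,V_n\right)^* \cong \invlim_n \left[\left(\bigwedge\!\!{}^k\,V_n\right)^*\right],
\end{equation*}
which is exactly the claimed isomorphism.

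I expect the only genuine point requiring care to be the verification that $\bigwedge^k$ commutes with the direct limit, since the duality step is entirely formal. For this I would note that an element of $\dirlim_n \bigwedge^k V_n$ is represented by a decomposable-sum in some fixed $\bigwedge^k V_n$, and that the natural map to $\bigwedge^k \dirlim_n V_n$ is surjective because any finite collection of elements of $\dirlim_n V_n$ lifts to a common $V_n$, and injective because any relation among wedges in $\dirlim_n V_n$ already holds at some finite stage (filtered colimits are exact on vector spaces, all being flat over the field $F$). I would present this last point as a brief remark rather than a detailed computation, since the essential content is just the exactness of filtered colimits of $F$-modules. Everything is natural in the system $\{V_n\}$, so the resulting isomorphism is canonical, as asserted.
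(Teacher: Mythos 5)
Your proposal is correct and follows essentially the same route as the paper: the paper also constructs the natural map $\dirlim_n \bigwedge^k V_n \to \bigwedge^k \dirlim_n V_n$ from the universal property, dualizes it using the identification $\bigl(\dirlim_n \bigwedge^k V_n\bigr)^* = \invlim_n \bigl[\bigl(\bigwedge^k V_n\bigr)^*\bigr]$, and asserts the resulting map $\psi$, given by $\psi(f)=(f\circ \wedge^k\phi_1, f\circ\wedge^k\phi_2,\ldots)$, is an isomorphism. You merely make explicit the two formal ingredients the paper leaves as ``straightforward to verify'' --- that $\bigwedge^k$ commutes with directed colimits of vector spaces and that $\Hom_F(-,F)$ turns colimits into limits --- with the only cosmetic quibble being that $\bigwedge^k$ is not literally a left adjoint, though your concrete justification via exactness of filtered colimits is the right one.
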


\noindent
\proof By universal properties of direct limits, there exist $\displaystyle{\phi_\ell:\ V_\ell\rightarrow\dirlim_{n} V_{n}}$ compatible with transition functions $\phi_{\ell m}:\ V_\ell\rightarrow V_m$ for all $\ell\leq m$.  These define maps $\wedge^k\phi_\ell:\ \wedge^k V_\ell\rightarrow \wedge^k\displaystyle\dirlim_{n} V_n$ compatible with the transition maps $\wedge^k\phi_{\ell m}:\ \wedge^k V_\ell\rightarrow \wedge^k V_m$.  This induces a map $\displaystyle{
\dirlim_{n}\wedge^{k}\phi_{n}:\dirlim_{n}\wedge^k V_{n}\rightarrow \wedge^k \dirlim_{n} V_{n}}$.  Dualizing, we obtain the desired map
$$\psi:\ \left(\bigwedge\!\!{}^k\,\dirlim_{n} V_{n}\right)^*\rightarrow\left(\dirlim_{n}\bigwedge\!\!{}^k\,V_n\right)^* =\invlim_{n}\left[\left(\bigwedge\!\!{}^k\,V_n\right)^*\right].$$
It is straightforward to verify that $\psi$ is a vector space isomorphism.  Concretely, $\psi(f)=(f_1,f_2,\ldots)$, where
$$f_n=f \circ\wedge^k\phi_n,$$
for each $n$.\qed

\bigskip

%

Let $X=\invlim X_{n}$ be a Poisson provariety.  As in the finite dimensional case, the Hamiltonian vector field $\xi_{f}$ of $f$ is defined by $\xi_{f}(g)=\{f,g\}$ for any $f,\,g\in\mathcal{O}_{X}(X)$.
The cotangent space $T_{x}^{*}(X)=\dirlim T_{x_{n}}^{*}(X_{n})$ is spanned by 
the differentials $df_{x}$ of global functions $f\in\OX(X)\cong\dirlim\mathcal{O}_{X_{n}}(X_{n})$.  
Thus, for each $x\in X$, the Poisson bracket $\{\cdot,\cdot\}$ defines an element
$\pi_{X,x}\in\left(\wedge^2T_{x}^{*}X\right)^*$ given by 
\begin{equation}\label{eq:bivector}
\pi_{X,x}(df_{x}, dg_{x}):=\{f, g\}(x).
\end{equation}
cf (\ref{eq:bivec}).  By Lemma \ref{wedgelimlemma}, we can view $\pi_{X,x}$ as an element of $\invlim \wedge^2 T_{x_n}X_n$ at each $x\in X$. 
We define the Poisson bivector of $X$, $\pi_{X}$ to be the element of $\invlim \wedge^{2} TX_{n}$ 
whose value at each $x\in X$ is given by (\ref{eq:bivector}).  The bivector $\pi_{X}$ is an inverse limit of the Poisson 
bivector on each $X_{n}$.  

\begin{prop}\label{anchormapprop} Let $X=\invlim X_n$ be a Poisson provariety, and let $\pi_{n}\in \wedge^2 TX_n$ be the bivector fields defining the Poisson structure on $X_n$.  Then $\pi_X=\invlim \pi_{n}\in\invlim\wedge^2 TX_n$.  For each $x\in X$, the anchor map $\widetilde{\pi}_{X,x}:T_{x}^{*}X\rightarrow T_xX$ is 
\begin{equation*}
\widetilde{\pi}_{X,x}(\lambda_{n})=\left(dp_{n1}\wpi_{n,x_{n}}(\lambda_n), dp_{n2}\wpi_{n,x_{n}}(\lambda_n),\ldots,\wpi_{n,x_{n}}(\lambda_n),\wpi_{n+1,x_{n+1}}(dp_{n+1,n}^*\lambda_n),\dots\right),
\end{equation*}
for $\lambda_n\in \dirlim T^{*}_{x_{n}}X_{n}$, a representative of $\lambda_{n}\in T^{*}_{x_{n}}(X_{n})$ in 
the direct limit. 

\end{prop}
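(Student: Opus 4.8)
The plan is to prove the two assertions in turn: first that $\pi_X$ is the inverse limit of the bivectors $\pi_n$, and then to read off the anchor map componentwise under the canonical projections. For the first assertion, recall from Definition-Proposition \ref{dp:poisprovariety} that $\OX(X)\cong\dirlim\mathcal{O}_{X_n}(X_n)$ as Poisson algebras, so that global functions $f,g\in\OX(X)$ represented by $f_k,g_k\in\mathcal{O}_{X_k}(X_k)$ for $k$ large satisfy $\{f,g\}(x)=\{f_k,g_k\}(x_k)=\pi_{k,x_k}(d(f_k)_{x_k},d(g_k)_{x_k})$. Writing $\phi_k\colon T^*_{x_k}(X_k)\to T^*_x(X)=\dirlim T^*_{x_n}(X_n)$ for the canonical map into the direct limit, one has $\phi_k(d(f_k)_{x_k})=d(p_k^*f_k)_x=df_x$. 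Comparing with the definition (\ref{eq:bivector}) of $\pi_{X,x}$ shows that its component $\psi(\pi_{X,x})_k=\pi_{X,x}\circ\wedge^2\phi_k$ under the isomorphism of Lemma \ref{wedgelimlemma} equals $\pi_{k,x_k}$; hence $\pi_X=\invlim\pi_n$.

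For the anchor map, I would compute the image $\widetilde{\pi}_{X,x}(\lambda)\in T_x(X)=\invlim T_{x_n}(X_n)$ by determining its components under the canonical projections $(dp_k)_x$ of Theorem \ref{thm:tangentspace} (see Diagram (\ref{eq:naturality})). Since $(dp_k)_x$ is dual to $\phi_k$ with respect to the pairing of $T_x(X)$ with $T^*_x(X)$, setting $w_k:=(dp_k)_x(\widetilde{\pi}_{X,x}(\lambda))$ and writing $\lambda=\phi_n(\lambda_n)$ for the chosen representative gives, for every $\omega_k\in T^*_{x_k}(X_k)$,
$$
w_k(\omega_k)=\widetilde{\pi}_{X,x}(\lambda)\big(\phi_k(\omega_k)\big)=\pi_{X,x}\big(\phi_n(\lambda_n),\phi_k(\omega_k)\big).
$$

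The evaluation then splits into two cases according to whether $k\geq n$ or $k<n$. When $k\geq n$, the compatibility $\phi_n=\phi_k\circ dp_{kn}^*$ of the direct system rewrites $\lambda=\phi_k(dp_{kn}^*\lambda_n)$; since $\pi_{X,x}$ restricts to $\pi_{k,x_k}$ on the image of $\wedge^2\phi_k$ by the first assertion, this yields $w_k(\omega_k)=\pi_{k,x_k}(dp_{kn}^*\lambda_n,\omega_k)$, i.e.\ $w_k=\wpi_{k,x_k}(dp_{kn}^*\lambda_n)$. This produces the entries of the claimed formula from position $n$ onward, using $dp_{nn}^*=\id$ at position $n$. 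When $k<n$, I would instead use the factorization $p_k=p_{nk}\circ p_n$, giving $(dp_k)_x=dp_{nk}\circ(dp_n)_x$ and hence $w_k=dp_{nk}(w_n)=dp_{nk}\,\wpi_{n,x_n}(\lambda_n)$, which matches the leading entries of the formula.

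The only genuine subtlety is the bookkeeping forced by the opposite limit directions of $T^*_x(X)=\dirlim T^*_{x_n}(X_n)$ and $T_x(X)=\invlim T_{x_n}(X_n)$, together with checking that the formula is independent of the representative $\lambda_n$. Replacing $\lambda_n$ by $dp_{n+1,n}^*\lambda_n$ leaves all entries at levels $\geq n+1$ unchanged by functoriality of the dual, while agreement at lower levels reduces to the identity $dp_{n+1,n}\,\wpi_{n+1,x_{n+1}}(dp_{n+1,n}^*\lambda_n)=\wpi_{n,x_n}(\lambda_n)$, which is exactly the intertwining of anchor maps expressing that $p_{n+1,n}$ is a Poisson morphism. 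Because the $w_k$ are by construction the components of a single element of $\invlim T_{x_n}(X_n)$, their mutual compatibility $dp_{k+1,k}(w_{k+1})=w_k$ is automatic, so this is the only place where the Poisson hypothesis on the transition maps is needed.
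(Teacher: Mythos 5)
Your proposal is correct and is exactly the ``elementary computation using the definition of the Poisson bracket'' that the paper leaves to the reader: you unwind Lemma \ref{wedgelimlemma} and the duality between $(dp_k)_x$ and the canonical maps into $\dirlim T^*_{x_n}(X_n)$ from Theorem \ref{thm:tangentspace}, split on $k\geq n$ versus $k<n$, and correctly identify that independence of the representative $\lambda_n$ is the push-forward identity $dp_{n+1,n}\,\wpi_{n+1,x_{n+1}}\circ dp_{n+1,n}^{*}=\wpi_{n,x_n}$ expressing that the transition maps are Poisson. Note only that your first step relies on differentials of global functions spanning each $T^*_{x_k}(X_k)$, the same standing assumption the paper makes just before Equation (\ref{eq:bivector}), and that this Poisson hypothesis is also implicitly used there (via Lemma \ref{l:Poisson}) when you write $\{f,g\}(x)=\{f_k,g_k\}(x_k)$, not only in the representative check.
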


\noindent
\begin{proof}
This is an elementary computation using the definition of the Poisson bracket $\{\cdot,\cdot\}$ on $X$. \qed
\end{proof}

\begin{exam}\label{ex:Lie-Poisson}{\em 
For $n\in\mathbb{N}$, let $\fg_{n}$ be a finite dimensional, complex Lie algebra.  Then $\fg_{n}^{*}$ is a Poisson variety with the Lie-Poisson structure.  The Poisson bracket of linear functions $x_{n},\, y_{n}\in\fg_{n}$ is given by their Lie bracket, i.e.
\begin{equation}\label{eq:Lie-Poisson}
\{x_{n}, y_{n}\}(\mu_{n})=\mu_{n}([x_{n},y_{n}]),
\end{equation}
for $\mu_{n}\in\fg_{n}^{*}$ (see for example, Section 1.3, \cite{CG}).  We denote the corresponding bivector by $\pi_{n}\in\wedge^{2}T\fg_{n}^{*}$.  We let $\ad^{*}$ denote the coadjoint action of $\fg_{n}$ on $\fg_{n}^{*}$.  Equation \ref{eq:Lie-Poisson} implies the anchor map $\wpi_{n}$ for the Lie-Poisson structure on $\fg_{n}^{*}$ is given by 
\begin{equation}\label{eq:littleanchor}
\wpi_{n,\mu_{n}}(x_{n})=-\ad^{*}(x_{n})\mu_{n}.
\end{equation}

Now suppose we have a chain of Lie algebras as in Equation \ref{eq:chain} of Example \ref{MPoisson}:
$$
\fg_{1}\stackrel{j_{12}}{\to}\fg_{2}\stackrel{j_{23}}{\to}\fg_{3}\to\dots\to \fg_{n}\stackrel{j_{n,n+1}}{\to}\cdots,
$$
and let $\fg:=\dirlim\fg_{n}$ be the corresponding direct limit Lie algebra.  
Since the homomorphisms $j_{n,n+1}:\fg_{n}\to\fg_{n+1}$ are inclusions, their pullbacks $p_{n+1,n}:\fg_{n+1}^*\to\fg_{n}^*$ are Poisson submersions with respect to the Lie-Poisson structures on $\fg_{n+1}^*$ and $\fg_{n}^*$.  Thus, 
$\fg^{*}=\invlim\fg_{n}^{*}$ is a Poisson provariety with bivector $\pi_{\fg^{*}}=\invlim\pi_{n}$.    

For $\mu\in\fg^{*}$, we identify the cotangent space $T_{\mu}^{*}(\fg^{*})$ with $\fg$ as in (\ref{eq:cotan}). 
Then Proposition \ref{anchormapprop} and Equation \ref{eq:littleanchor} imply that the anchor map is 
\begin{equation}\label{eq:anchor}
\wpi_{\fg^*,\mu}(x_n)=\left(-\ad^*(x_n)\mu_n|_{\fg_{1}},\ldots,-\ad^*(x_n)\mu_n|_{\fg_{n-1}},-\ad^*(x_n)\mu_n,\ldots, -\ad^*(x_n)\mu_k,\ldots\right),
\end{equation}
for $x_{n}\in\fg_{n}\subset \fg$, $\mu\in\fg^*$, and where $-\ad^{*}(x_{n})\mu_{n}|_{\fg_{\ell}}$ denotes the restriction of the linear functional $-\ad^{*}(x_{n})\mu_{n}\in\fg_{n}^{*}$ 
to $\fg_{\ell}$ for $\ell<n$.}
\end{exam}

By Equation \ref{eq:anchor}, the kernel of the anchor map consists precisely of the covectors $x_n\in T_{\mu_{n}}^*\fg_n^*\subseteq T^{*}_{\mu}\fg^*=\fg$ whose coadjoint action $\ad^*(x_n)$ annihilates $\mu_k$ for $k\geq n$.  For $k\geq n$, let $\fg_n^{\mu_k}:=\{x_n\in\fg_n :\ \ad^*(x_n)\mu_k=0\}$ denote the annihilator of $\mu_k$ in $\fg_n$. 
\begin{prop}\label{p:injective}
Let $\mu\in\fg^{*}$ and let $\mbox{Ker } \widetilde{\pi_{\fg^{*}}}_{\mu}$ be the kernel of the anchor map $\widetilde{\pi_{\fg^{*}}}$ at $\mu$.  Then 
\begin{equation}\label{eq:keranchor}
\mbox{Ker }\widetilde{\pi_{\fg^{*}}}_{\mu}=\dirlim_{n}\bigcap_{k\geq n} \fg_{n}^{\mu_{k}}. 
\end{equation}

\end{prop}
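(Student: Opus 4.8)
The plan is to unwind the definition of the anchor map given in Equation \ref{eq:anchor} and identify exactly when a cotangent vector lies in its kernel. First I would fix $\mu\in\fg^*$ and take an arbitrary element of $T^*_\mu(\fg^*)=\fg=\dirlim\fg_n$. Such an element is represented by some $x_n\in\fg_n$ for a sufficiently large $n$ (recall $\fg$ is a direct limit, so every element lives in some finite stage $\fg_n$). Applying the explicit formula in Equation \ref{eq:anchor}, the image $\wpi_{\fg^*,\mu}(x_n)$ is the sequence whose $k$-th component is $-\ad^*(x_n)\mu_k$ (restricted to $\fg_\ell$ for the early components, but these restrictions are determined by the unrestricted functionals at later stages). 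Since $\fg^*=\invlim\fg_n^*$ is a genuine inverse limit, this sequence is zero if and only if \emph{every} component vanishes, i.e.\ if and only if $\ad^*(x_n)\mu_k=0$ for all $k\geq n$. By the definition $\fg_n^{\mu_k}=\{x_n\in\fg_n:\ \ad^*(x_n)\mu_k=0\}$ preceding the statement, this is precisely the condition $x_n\in\bigcap_{k\geq n}\fg_n^{\mu_k}$.

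Next I would assemble these stagewise kernels into a direct limit. The subtlety is that the kernel of the anchor map is a subspace of $\fg=\dirlim\fg_n$, so I must check that the subspaces $\bigcap_{k\geq n}\fg_n^{\mu_k}\subseteq\fg_n$ are compatible with the transition maps $j_{n,n+1}$, so that their direct limit makes sense and injects into $\fg$. Concretely, if $x_n\in\bigcap_{k\geq n}\fg_n^{\mu_k}$, I would verify that its image $j_{n,n+1}(x_n)\in\fg_{n+1}$ lies in $\bigcap_{k\geq n+1}\fg_{n+1}^{\mu_k}$; this follows from the coadjoint-equivariance built into the inclusion $\fg_n\hookrightarrow\fg_{n+1}$ and the compatibility $\mu_{k}|_{\fg_n}$ coming from $\mu\in\invlim\fg_n^*$. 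Once compatibility is established, $\dirlim_n\bigcap_{k\geq n}\fg_n^{\mu_k}$ is a well-defined subspace of $\fg$, and the computation above shows it coincides setwise with $\Ker\wpi_{\fg^*,\mu}$.

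The main obstacle is bookkeeping rather than conceptual: I must be careful that an element of $\Ker\wpi_{\fg^*,\mu}$ is \emph{a priori} only known to be represented at some stage $n$, and that the kernel condition $\ad^*(x_n)\mu_k=0$ for all $k\geq n$ is genuinely stable under moving to a larger representative $x_{n'}=j_{nn'}(x_n)$ at stage $n'>n$. In other words, I need the two natural descriptions — "$x_n$ kills $\mu_k$ for all $k\geq n$ as an element of $\fg_n$" and "the same element viewed in $\fg_{n'}$ kills $\mu_k$ for all $k\geq n'$" — to define the same element of the direct limit. This is exactly what makes the direct-limit notation in Equation \ref{eq:keranchor} well-posed, and it is where I would spend the most care. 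Assuming the explicit anchor formula of Equation \ref{eq:anchor} (which the excerpt already establishes), the remaining verification is the elementary, though slightly delicate, check that these identifications are consistent, after which the equality of the two sides is immediate.
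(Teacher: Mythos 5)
Your proposal is correct and follows the same route as the paper: the paper derives the proposition directly from the explicit anchor formula in Equation \ref{eq:anchor}, observing (as you do) that $\wpi_{\fg^{*},\mu}(x_{n})=0$ if and only if $\ad^{*}(x_{n})\mu_{k}=0$ for all $k\geq n$, i.e.\ $x_{n}\in\bigcap_{k\geq n}\fg_{n}^{\mu_{k}}$. Your additional verification that the subspaces $\bigcap_{k\geq n}\fg_{n}^{\mu_{k}}$ are compatible with the transition maps $j_{n,n+1}$ (using $\ad^{*}(x_{n})\mu_{k}=\bigl(\ad^{*}(x_{n})\mu_{k'}\bigr)|_{\fg_{k}}$ for $k\leq k'$) is a routine check the paper leaves implicit, and it is carried out correctly.
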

In the case where $\fg_{n}$ is reductive with adjoint group $G_{n}$, we can use the non-degenerate $G_{n}$-equivariant form $\ll \cdot,\cdot\gg $ on $\fg_{n}$ to transfer the Lie-Poisson structure of $\fg_{n}^{*}$ to $\fg_{n}$.  The coadjoint action of $G_{n}$ on $\fg_{n}^{*}$ is then identified with 
the adjoint action of $G_{n}$ on $\fg_{n}$.  The induced maps $p_{n+1,n}:\fg_{n+1}\to \fg_{n}$ are Poisson submersions and the provariety $\tilde{\fg}=\invlim\fg_{n}$
defined in Equation \ref{eq:gtilde} is a Poisson provariety.  For example, the provariety $M(\infty)$ defined in Equation \ref{eq:Minfty} is a Poisson provariety. 

Let $X$ be a Poisson provariety with bivector $\pi_{X}\in \invlim\wedge^{2} TX_{n}$.  For each $x\in X$, consider the subspace 
\begin{equation}\label{eq:char}
{\mathfrak{X}}(X)_{x}=\{(\xi_{f})_{x}:\, f\in\mathcal{O}_{X}(X)\}=\mbox{Im}\{\tilde{\pi}_{X,x}(T_{x}^{*}(X))\}\subseteq T_{x}(X).
\end{equation}
As in the finite dimensional case, we refer to the union $\mathfrak{X}(X)=\bigcup_{x\in X} \mathfrak{X}(X)_{x}$ as the \emph{characteristic distribution} of $X$.  

If $(X,\mathcal{O}_{X})$ is a finite dimensional, non-singular, Poisson variety over $\C$ then $\mathfrak{X}(X)$ is an integrable distribution.  Its leaves are immersed Poisson  analytic submanifolds of $(S,\{\cdot, \cdot\}_{S})$ where the Poisson bracket on $S$ is induced by a symplectic form $\omega_{S}$ on $S$ (see Chapter 2, \cite{Va} for example). The Poisson submanifolds $(S,\{\cdot, \cdot\}_{S})$ are referred to as \emph{symplectic leaves} of $X$.  For example, let $\fg_{n}^{*}$ be the dual space of a finite dimensional Lie algebra over $\C$ with 
the Lie-Poisson structure $\pi_{n}$ as in Example \ref{ex:Lie-Poisson}.  Then the symplectic leaves of $(\fg_{n}^{*}, \pi_{n})$
are the coadjoint orbits of $G_{n}$ on $\fg_{n}^{*}$ equipped with Kostant-Kirillov symplectic structure, where $G_{n}$ is any connected Lie group with Lie algebra $\fg_{n}$ (see Proposition 3.1, \cite{Va}). 

In infinite dimensions, it is not known whether the characteristic distribution is integrable even for the case of Banach-Poisson manifolds \cite{OR}.  In Section \ref{s:leaves}, we show that for the dual  $\fg^{*}$ of a direct limit Lie algebra $\fg$, the characteristic distribution is integrable, and that the symplectic foliation of $\fg^{*}$ is given by the coadjoint orbits of an Ind-group $G$ on $\fg^{*}$ with Lie algebra $\fg$.  For this, we need to study ind-varieties and direct limit groups in more detail.


\section{Ind-groups}\label{s:ind-varieties}
\subsection{Basic definitions}

In this section, we recall some basic facts about ind-varieties.  For further reading, see \cite{Ku}.  

For each $n\in\mathbb{N}$, let $X_{n}$ be a finite dimensional variety defined over the field $F$. 
Suppose for any $m\in\mathbb{N}$ with $n\leq m$, we have a locally closed embedding
$i_{nm}: X_{n}\to X_{m}$.  We call the direct limit $X:=\dirlim X_{n}$ of the varieties $\{X_{n}\}_{n\in\mathbb{N}}$, 
an \emph{ind-variety}.\footnote{ The traditional definition of an ind-variety stipulates that the embeddings $i_{nm}: X_{n}\to X_{m}$ are closed (see for example \cite{Ku, DPW}).  We require a slightly more general notion for the objects we consider.} 

  As a topological space $X$ is endowed with the final topology (i.e. the finest topology for which the inclusion maps $\iota_n:\ X_n\hookrightarrow X$ are continuous), so that $U\subset X$ is open if and only if $U\cap X_{n}$ is open for all $n\in\mathbb{N}$.  
It is easy to see that $Z\subset X$ is closed if and only if $Z\cap X_{n}$ is closed for all $n\in\mathbb{N}$.  An ind-variety $X$ is said to be irreducible if its 
underlying topological space is irreducible.  One notes that if 
$X=\dirlim X_{n}$ with $X_{n}$ irreducible for all $n$, then $X$ is irreducible.

For any open set $U\subseteq X$, the structure sheaf is given by $\cO_X(U)=\invlim\cO_{X_n}(U_n)$, where $U_n=U\cap X_n$.  (When there is no ambiguity, we identify $X_n$ with its image $\iota_n(X_n)\subseteq X$.)  A map $f:\ X\rightarrow Y$ is a {\em morphism of ind-varieties} if there is a strictly increasing function $m:\ \mathbb{N}\rightarrow\mathbb{N}$, such that the restriction $f_n$ of $f$ to $X_n\subseteq X$ is a morphism of varieties $f_n:\ X_n\rightarrow Y_{m(n)}$.  The map $f$ induces a morphism of ringed spaces $(X,\cO_X)\rightarrow (Y,\cO_Y)$.  Two ind-variety structures on the same set $X$ are said to be equivalent
if the identity map $I_{X}:X\to X$ is an isomorphism of ind-varieties.  We will
not distinguish between equivalent ind-variety structures. 

The product $X\times Y$ of two ind-varieties $X$ and $Y$ is naturally an ind-variety, by viewing $X\times Y=\dirlim(X\times Y)_n$, where $(X\times Y)_n=X_n\times Y_n$, and the transition maps $\iota_{nm}:\ (X\times Y)_n\rightarrow (X\times Y)_m$ are given by $\iota_{nm}=\iota_{nm}^X\times\iota_{nm}^Y$, where $\iota_{nm}^X:\ X_n\rightarrow X_m$ and $\iota_{nm}^Y:\ Y_n\rightarrow Y_m$ are the corresponding transition maps for $X$ and $Y$.

Given an element $x$ of an ind-variety $X=\dirlim X_n$, there exists $k\in\mathbb{N}$ so that $x\in X_\ell$ for all $\ell\geq k$.  We define the {\em tangent space} $T_x(X)$ to $X$ at $x$ to be $T_x(X)=\displaystyle{\dirlim_{\ell\geq k}T_x(X_\ell)}$.  For a morphism of ind-varieties $f:X\to Y$, the differential $ (df)_{x}$ at $x\in X$ is given by 
$$(df)_x=\dirlim_{\ell\geq k}(df_\ell)_x:\ \dirlim_{\ell\geq k}T_x(X_\ell)\rightarrow\dirlim_{\ell\geq k}T_{f(x)}(Y_{m(\ell)}),$$
where $f_{\ell}: X_{\ell}\to Y_{m(\ell)}$ is the morphism obtained by restricting $f$ to $X_{\ell}$.   

The next proposition asserts that an ind-variety is a direct limit in the category of ringed spaces. 
\begin{prop}\label{prop:indvariety}
Let $\displaystyle{(X=\dirlim X_{n}, \mathcal{O}_{X})}$ be an ind-variety and let $(Y,\mathcal{O}_{Y})$ be a locally ringed space.  For each $i\in\mathbb{N}$, suppose we have morphisms of locally ringed spaces
$$
f_{n}:(X_{n},\mathcal{O}_{X_{n}})\rightarrow (Y,\mathcal{O}_{Y})
$$
such that the following diagram commutes.
\begin{equation}\label{eq:diagram1}
\xymatrixcolsep{75pt}\xymatrix{
(Y,\mathcal{O}_{Y}) &  \\
(X_{n},\mathcal{O}_{X_{n}})\ar[u]_{f_{n}}\ar[r]^{i_{nm}} & (X_{m},\mathcal{O}_{X_{m}})\ar[ul]_{f_{m}}.}
\end{equation}
Then $f:=\displaystyle{\dirlim f_{n}}: (X,\mathcal{O}_{X})\to (Y,\mathcal{O}_{Y})$ is a morphism of ringed spaces with differential:
$df=\displaystyle{\dirlim df_{n}}: T(X)\to T(Y)$.  

\end{prop}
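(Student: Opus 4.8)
The plan is to dualize the proof of Proposition \ref{prop:provariety}, replacing inverse limits by direct limits at every step, so that the proposition becomes the statement that an ind-variety is a colimit in the category of ringed spaces. First I would produce the underlying continuous map. The commutativity of Diagram (\ref{eq:diagram1}) says $f_m\circ i_{nm}=f_n$ for all $m\geq n$, so the universal property of the direct limit of sets yields a unique map $f\colon X\to Y$ with $f\circ\iota_n=f_n$ for every $n$. Since $X$ carries the final topology, $f$ is continuous precisely because each composite $f\circ\iota_n=f_n$ is; concretely, for every open $V\subseteq Y$ one has $f^{-1}(V)\cap X_n=(f\circ\iota_n)^{-1}(V)=f_n^{-1}(V)$, which is open in $X_n$, whence $f^{-1}(V)$ is open in $X$.

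Next I would construct the comorphism $f^{\sharp}\colon\cO_Y\to f_*\cO_X$. Fix $V\subseteq Y$ open and set $U=f^{-1}(V)$, $U_n=U\cap X_n$; the identity $f\circ\iota_n=f_n$ gives $U_n=f_n^{-1}(V)$, so by the definition of the structure sheaf of an ind-variety we have $\cO_X(U)=\invlim_n\cO_{X_n}(U_n)=\invlim_n\cO_{X_n}(f_n^{-1}(V))$, the transition maps of the inverse system being the comorphisms $i_{nm}^{\sharp}$. Each $f_n$, being a morphism of locally ringed spaces, supplies an $F$-algebra map $f_n^{\sharp}\colon\cO_Y(V)\to\cO_{X_n}(f_n^{-1}(V))$, and the functoriality relation $(f_m\circ i_{nm})^{\sharp}=i_{nm}^{\sharp}\circ f_m^{\sharp}=f_n^{\sharp}$ exhibits $\{f_n^{\sharp}\}$ as a cone over that inverse system. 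The universal property of the inverse limit then yields a single $F$-algebra homomorphism $f^{\sharp}_V:=\invlim_n f_n^{\sharp}\colon\cO_Y(V)\to\cO_X(f^{-1}(V))$. A routine verification that the $f^{\sharp}_V$ commute with restriction assembles them into a morphism of sheaves $f^{\sharp}\colon\cO_Y\to f_*\cO_X$, so that $(f,f^{\sharp})$ is a morphism of ringed spaces.

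Finally I would identify the differential. For $x\in X$ with $x\in X_\ell$ for all $\ell\geq k$, the tangent space is $T_x(X)=\dirlim_{\ell\geq k}T_x(X_\ell)$. Applying the chain rule to $f_m\circ i_{nm}=f_n$ gives $(df_m)_x\circ(di_{nm})_x=(df_n)_x$, so the differentials $(df_\ell)_x$ form a compatible family on the direct system $\{T_x(X_\ell),(di_{\ell m})_x\}$. The universal property of the direct limit produces $(df)_x=\dirlim_{\ell\geq k}(df_\ell)_x\colon T_x(X)\to T_{f(x)}(Y)$, which is the asserted formula $df=\dirlim df_n$.

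I expect the only genuine subtlety to lie in the sheaf-theoretic bookkeeping of the second step: correctly matching the direction of the transition comorphisms $i_{nm}^{\sharp}$ against the inverse-limit presentation $\cO_X(U)=\invlim_n\cO_{X_n}(U_n)$, and checking compatibility of the $f^{\sharp}_V$ with restriction. The topological and tangent-space steps are purely formal consequences of the relevant universal properties and require no gluing argument.
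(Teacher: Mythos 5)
Your proposal is correct and follows essentially the same route as the paper: universal property of the direct limit plus the final topology for the continuous map, the inverse limit of the comorphisms $f_n^{\sharp}$ for the map of structure sheaves, and the universal property of direct limits applied to the compatible family $(df_\ell)_x$ for the differential. The only difference is presentational: where you build $f^{\sharp}_V=\invlim_n f_n^{\sharp}$ open set by open set on $Y$ and then check compatibility with restrictions by hand, the paper works at the sheaf level, forming the inverse limit of pushforward sheaves $\invlim f_{n,*}\mathcal{O}_{X_n}$ (citing Exercise II.1.12 of Hartshorne for its existence as a sheaf) and identifying it with $f_*\mathcal{O}_X$ --- which amounts to the same sectionwise computation, since your identity $\mathcal{O}_X(f^{-1}(V))=\invlim_n\mathcal{O}_{X_n}(f_n^{-1}(V))$ is exactly the paper's observation that $\invlim f_{n,*}\mathcal{O}_{X_n}=f_*\mathcal{O}_X$ follows from the definition of the ind-variety structure sheaf.
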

\noindent
\begin{proof}
By the universal property of the direct limit, there is a map of sets $f:=\displaystyle{\dirlim f_{n}: \dirlim X_{n}\to Y}$.  We note that 
$f$ is continuous, since $X=\displaystyle{\dirlim X_{n}=\bigcup_{n\in\mathbb{N}} X_{n}}$ has the final topology and each of the maps 
$f_{n}: X_{n}\to Y$ are continuous.  

We claim that $f$ induces a morphism of sheaves of $F$-algebras on $Y$, $f^{\sharp}:\mathcal{O}_{Y}\to f_{*}\mathcal{O}_{X}$.  The commutative diagram 
in (\ref{eq:diagram1}) gives rises to a commutative diagram of morphisms of sheaves of $F$-algebras on $Y$:
\begin{equation}\label{eq:diagram2}
\xymatrixcolsep{75pt}\xymatrix{
\mathcal{O}_{Y}
\ar[d]_{f_{n}^{\sharp}}   \ar[dr]^{f_{m}^{\sharp}} \\
f_{n,*}\mathcal{O}_{X_{n}} & \ar[l]_{i_{nm}^{\sharp}} f_{m,*}\mathcal{O}_{X_{m}},}
\end{equation}
and $\{f_{m,*}\mathcal{O}_{X_{m}}, i_{nm}^{\sharp}\}$ is an inverse system of sheaves of $F$-algebras on $Y$.  By Exercise II 1.12, \cite{Ha}, $\invlim f_{n,*}\mathcal{O}_{X_{n}}$ is a sheaf of $F$-algebras on $Y$, which satisfies the universal property of inverse limits in the category of sheaves of $F$-algebras on $Y$.  Thus, we get a morphism of sheaves of $F$-algebras on $Y$:
$$
\invlim f_{n}^{\sharp}: \mathcal{O}_{Y}\to \invlim f_{n,*}\mathcal{O}_{X_{n}}.
$$ 
It follows from definitions that $\displaystyle{\invlim f_{n,*} \mathcal{O}_{X_{n}}=f_{*}\mathcal{O}_{X}}$.  If we let $f^{\sharp}:=\displaystyle{\invlim f_{n}^{\sharp}}$, then 
$(f, f^{\sharp}):(X,\mathcal{O}_{X})\to (Y,\mathcal{O}_{Y})$ is a morphism of ringed spaces.

We now compute the differential $df$.  Let $x\in X_{n}\subset X$.  The commutative diagram in (\ref{eq:diagram1}) yields 
a commutative diagram:
\begin{equation}\label{eq:diagram4}
\xymatrixcolsep{75pt}\xymatrix{
T(Y)_{f(x)}\\   
T_{x}(X_{n})\ar[u]_{(df_{n})_{x}} \ar[r]^{(di_{nm})_{x}} & \ar[ul]_{(df_{m})_{x}} T_{x}(X_{m}),}
\end{equation}
By the universal property of direct limits, we obtain a map:
$$
\dirlim_{m\geq n}(df_{m})_{x}: \dirlim_{m\geq n} T_{x}(X_{m})\to T_{f(x)}(Y).
$$
Since $\displaystyle{T_{x}(X)=\dirlim_{m\geq n} T_{x}(X_{m})}$, we have $\displaystyle{(df)_{x}=\dirlim_{m\geq n}(df_{m})_{x}}$.  
\qed

\end{proof}
\subsection{Affine direct limit groups}

Let $\{G_{n}, i_{nm}\}_{m\geq n\in \mathbb{N}}$ be a directed system of affine algebraic groups, and let 
$i_{nm}: G_{n}\to G_{m}$ be a homomorphic embedding of algebraic groups.  Then the image of $G_{n}$ is closed in $G_{m}$ (see for example, 
Section 7.2, \cite{Hum}).  The (affine) direct limit group $G=\dirlim G_{n}$ is then naturally an ind-variety.   

For $G=\dirlim G_{n}$ a direct limit group, we consider the tangent space at the identity, $T_{e}(G)$.  We have $T_{e}(G)=\dirlim T_{e}(G_{n})\cong\dirlim \fg_{n}$, where $\fg_{n}=\hbox{Lie}(G_{n})\cong T_{e} (G_{n})$, and 
we think of $\hbox{Lie}(G_{n})$ as the Lie algebra of right invariant vector fields on $G_{n}$.  The ind-variety $\fg:=\dirlim \fg_{n}=\mbox{Lie}(G)$ 
is a direct limit Lie algebra (see Example \ref{MPoisson}).  
\begin{exam}\label{ex:GLinfty}
\em{For each $n\in\mathbb{N}$, let $G_{n}:=GL(n,\C)$ be 
the group of $n\times n$ invertible matrices over the complex numbers.  
We can embed $G_{n}$ in $G_{n+1}$ via the map
$$
i_{nn+1}: g \hookrightarrow \left[\begin{array}{cc} g & 0\\
0 & 1\end{array}\right]\in G_{n+1}.
$$
This map is clearly a closed embedding, so we can form the direct limit group:
\begin{equation}\label{eq:GLinfty}
GL(\infty):=\dirlim G_{n}.
\end{equation}
Of course, $\mbox{Lie}(GL(\infty))=\fgl(\infty)=\dirlim \fgl(n,\C)$ is the 
direct limit Lie algebra discussed in Example \ref{MPoisson}.}
\end{exam}


An {\em algebraic action} of a direct limit group $G$ on an ind-variety $V$ is a morphism of ind-varieties $f:\ G\times V\rightarrow V$ such that each restriction $f_n=f|_{G_n\times V_n}$ defines an algebraic action of $G_n$ on $V_n$, and the following diagram commutes:
$$
\xymatrix{G_n\times V_n\ar[d]_{\iota_{nm}^G\times\iota_{nm}^V}\ar[rr]^{f_n}&&V_n\ar[d]^{\iota_{nm}^V}\\
G_m\times V_m\ar[rr]_{f_m}&&V_m\;,}
$$  
i.e. $f=\dirlim f_{n}$.  If the algebraic action of $G$ on $V$ is transitive, we say that $V$ is a {\em homogeneous space} for $G$.  If each $V_n$ is a vector space over the base field $F$, then $V$ is an {\em algebraic representation}.  Any algebraic representation
$\rho: G\times V\to V$ induces a representation $d\rho: \fg\times V\to V$ of $\fg$ by differentiation.

\begin{exam}{\em
The adjoint representation $\hbox{Ad}:G\times\fg\rightarrow \fg$ defines an algebraic representation of $G$ on $\fg$, and its differential $\hbox{ad}:\fg\times\fg\rightarrow \fg$ is the adjoint representation of $\fg$:
$$\hbox{ad}(X)(Y)=[X,Y]_k,$$
where $X\in\fg_n$, $Y\in\fg_m$, $k=\hbox{max}\{n,m\}$, and $[X,Y]_k$ 
is the bracket of $X$ and $Y$ thought of as elements of $\fg_{k}$.


The directed system $\iota_{nm}:\fg_n\rightarrow\fg_m$ induces an inverse system $\iota_{nm}^*:\fg_m^*\rightarrow\fg_n^*$ for $n\leq m$.  The transition maps $\iota_{nm}^*:\fg_m^*\rightarrow\fg_n^*$ are $G_n$-equivariant 
with respect to the coadjoint action of $G_{n}\subset G_{m}$ on $\fg_{m}^{*}$ and $\fg_{n}^{*}$.  
Thus, we obtain an action of $G$ on the dual space of its Lie algebra 
$\fg^{*}=\invlim\fg_{n}^{*}$, which we refer to as the coadjoint action of $G$ on $\fg^{*}$.  
 Concretely, let $\lambda=(\lambda_{1},\dots, \lambda_{n},\lambda_{n+1},\dots, )\in\fg^*$.  For $g\in G$, there exists $n>0$ so that $g\in G_{n}$, and then 
\begin{equation}\label{eq:coad}
\Ad^{*}(g)\cdot\lambda=((\Ad^{*}(g)\cdot \lambda_{n})|_{\fg_{1}},\dots, (\Ad^{*}(g)\cdot \lambda_{n})|_{\fg_{n-1}}, \Ad^{*}(g)\cdot \lambda_{n}, \dots, \Ad^{*}(g)\cdot \lambda_{k},\dots),
\end{equation}
where $(\Ad^{*}(g)\cdot \lambda_{n})|_{\fg_{j}}$ denotes the restriction of $\Ad^{*}(g)\cdot \lambda_{n}\in\fg_{n}^{*}$ to $\fg_{j}$ for $j<n$. }
\end{exam}
As has already been discussed in Section \ref{s:Poisson}, $\fg^*$ is a Poisson provariety.  In the next section, we will see that the coadjoint orbits of $G$ on $\fg^{*}$ form a weak symplectic foliation of the Poisson provariety $\fg^{*}$.  To do this, we first need to endow the coadjoint orbits described in (\ref{eq:coad}) with the structure of a $G$-homogeneous ind-variety in a natural way.  The key ingredient is the following proposition. 


\begin{prop}\label{prop:homog}
Let $H$ be a closed subgroup of a direct limit group $G$.  Then $H$ is a direct limit group, and the quotient space $G/H$ is an ind-variety and thus a homogeneous space for $G$.  For any $g\in G$, the tangent space $T_{gH}(G/H)$ can be identified with the ind-variety $\fg/ \Ad(g)\fh$. 


Conversely, if $G$ acts transitively on a nonempty set $X$ and the isotropy group $G^x$ of any $x\in X$ is closed, then $X$ can naturally be given the structure of an ind-variety by identifying $X$ with the $G$-homogeneous ind-variety $G/G^{x}$.  The resulting ind-variety structure on $X$ is independent of the choice 
of point $x\in X$.    

\end{prop}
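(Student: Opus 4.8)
The plan is to construct everything one finite level at a time from the algebraic groups $G_n$ and then pass to the direct limit. For the first assertion, I would set $H_n := H\cap G_n$. Since $H$ is closed in the ind-variety $G=\dirlim G_n$, each $H_n$ is closed in $G_n$, hence a closed subgroup of the affine algebraic group $G_n$ and so itself an affine algebraic group. Because $i_{nm}(H_n)\subseteq H\cap G_m=H_m$, the transition maps restrict to homomorphic embeddings $H_n\hookrightarrow H_m$; their images are closed (as $i_{nm}$ is a closed embedding and $H_n$ is closed in $G_n$), and $H=\bigcup_n H_n=\dirlim H_n$. Thus $H$ is a direct limit group.

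Next I would realize $G/H$ as an ind-variety by forming the finite-level quotients $G_n/H_n$, each a quasi-projective variety carrying an algebraic $G_n$-action, and checking that $G/H=\dirlim G_n/H_n$ as sets (a short argument using $H\cap G_N=H_N$). The heart of the matter is to show that each transition map $G_n/H_n\to G_m/H_m$, $gH_n\mapsto i_{nm}(g)H_m$, is a locally closed embedding. I would identify it with the orbit map of the $G_n$-action on $G_m/H_m$ at the base point $eH_m$: its image is the $G_n$-orbit of $eH_m$, which is locally closed since orbits of algebraic group actions are locally closed, and the stabilizer of $eH_m$ in $G_n$ is exactly $H_n$; as the base field has characteristic zero, the orbit map is separable and therefore an isomorphism onto this locally closed orbit. \emph{This is the main obstacle}, and it is exactly where the relaxed (locally closed rather than closed) notion of ind-variety from the footnote is required. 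Granting this, $G/H=\dirlim G_n/H_n$ is an ind-variety, and the left-translation actions $G_n\times G_n/H_n\to G_n/H_n$ assemble into a transitive algebraic action of $G$, so $G/H$ is a homogeneous space.

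For the tangent space, I would fix $g$, choose $k$ with $g\in G_n$ for all $n\geq k$, and use the ind-variety definition $T_{gH}(G/H)=\dirlim_{n\geq k}T_{gH_n}(G_n/H_n)$ together with the finite-dimensional identification $T_{gH_n}(G_n/H_n)\cong\fg_n/\Ad(g)\fh_n$ (obtained by translating $\fg_n/\fh_n$ by $g$). Since direct limits are exact, $\dirlim_{n\geq k}\fg_n/\Ad(g)\fh_n\cong\fg/\Ad(g)\fh$, yielding $T_{gH}(G/H)\cong\fg/\Ad(g)\fh$ as ind-varieties; the only point to confirm is that the differentials of the transition embeddings agree with the natural maps $\fg_n/\Ad(g)\fh_n\to\fg_m/\Ad(g)\fh_m$, which follows from $G_n$-equivariance.

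Finally, for the converse, I would fix $x$ and transport the ind-variety structure of $G/G^x$ to $X$ along the bijection $gG^x\mapsto g\cdot x$, which is well defined and bijective by transitivity and the definition of the stabilizer. To prove independence of the base point, let $y=h\cdot x$, so that $G^y=hG^x h^{-1}$ and $G^y\cap G_n=h(G^x\cap G_n)h^{-1}$ for $n$ large. I would show that the identity map of $X$ is an isomorphism between the two resulting structures by identifying it with the map $\Phi\colon G/G^x\to G/G^y$, $gG^x\mapsto gh^{-1}G^y$. This $\Phi$ is the descent of right translation $R_{h^{-1}}\colon G\to G$, an isomorphism of ind-varieties, through the quotient maps; hence $\Phi$ is an isomorphism of ind-varieties with inverse the descent of $R_h$, and the computation $gh^{-1}\cdot y=g\cdot x$ shows that $\Phi$ intertwines the two identifications of $X$. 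Therefore the two ind-variety structures on $X$ coincide.
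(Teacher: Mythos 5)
Your proposal is correct and follows essentially the same route as the paper: set $H_n=H\cap G_n$, realize $G/H=\dirlim G_n/H_n$ as an ind-variety using the exactness of the direct limit, identify $T_{gH}(G/H)$ level by level with $\fg_n/\Ad(g)\fh_n$, and in the converse transport the structure of $G/G^{x}$ to $X$. The only difference is that you supply justifications for two steps the paper merely asserts---that the transition maps $G_n/H_n\to G_m/H_m$ are locally closed embeddings (your identification with the $G_n$-orbit map at $eH_m$, using local closedness of orbits and separability in characteristic zero, is exactly the right argument, and is indeed where the footnote's relaxed notion of ind-variety is required) and that the resulting structure on $X$ is independent of the base point (via descent of right translation $R_{h^{-1}}$)---which only strengthens the write-up.
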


\noindent
\proof Write $G=\dirlim G_n$ and $H_n=H\cap G_n$ for each $n\in\mathbb{N}$.  Then $H=\dirlim H_n$ is naturally a direct limit subgroup of $G$, and we have the following commutative diagram with exact rows:
$$
\xymatrix{0\ar[r]&H_n\ar[d]_{\iota_{nm}}\ar[r]&G_n\ar[d]_{\iota_{nm}}\ar[r]&G_n/H_n\ar[d]_{\iota_{nm}}\ar[r]&0\\
0\ar[r]&H_m\ar[r]&G_m\ar[r]&G_m/H_m\ar[r]&0}
$$ 
where $\iota_{nm}$ denotes the transition map $G_{n}\to G_{m
}$ as well as its restriction to $H_{n}$ and the induced map on the quotient $i_{nm}: G_n/H_n\to G_{m}/H_{m}$.  
The transition maps $i_{nm}: G_{n}/ H_{n} \to G_{m}/H_{m}$ are locally closed embeddings.  By exactness of the direct limit functor,  the sequence
$$
\xymatrix{0\ar[r]&\dirlim H_n\ar[r]&\dirlim G_n\ar[r]&\dirlim G_n/H_n\ar[r]&0}
$$
is exact, so $\displaystyle{\dirlim G_n/H_n\cong\dirlim G_n/\dirlim H_n=G/H}$
is naturally an ind-variety.  It follows from definitions that the action of $G$ on $G/H$ is algebraic, so that $G/H$ is a $G$-homogeneous space. 

Let $gH\in G/H$ and consider the tangent space $T_{gH}(G/H)$.  By our discussion above, $gH$ can be identified with an unique element 
$g_{n} H_{n}\in \dirlim_{k} G_{k}/ H_{k}.$  It follows that 
\begin{equation*}
T_{gH}(G/H)=\dirlim_{k\geq n} T_{g_{n} H_{k}} (G_{k}/H_{k})=\dirlim_{k\geq n} \fg_{k}/\Ad(g_{n})\fh_{k}\cong \dirlim_{k\geq n}\fg_{k}/ \dirlim_{k\geq n}\Ad(g_{n}) \fh_{k}=\fg/\Ad(g)\fh,
\end{equation*}
where we have used right invariant vector fields to identify the tangent space $T_{xH_{k}}(G_{k}/H_{k})$ with $\fg_{k}/\Ad(x)\fh_{k}$ for any $x\in G_{k}$.

Conversely, suppose that $G$ acts on a nonempty set $X$.  Let $x\in X$.  Then $X=G\cdot x=\bigcup_{n=1}^{\infty} G_{n}\cdot x$.  Since $G^{x}$ is closed, $G_{n}^{x}=G_{n}\cap G^{x}$ is closed for each $n$.  
Thus, $G_{n}\cdot x$ can be given the structure of a variety such that 
$G_{n}\cdot x\cong G_{n}/G_{n}^{x}$ as algebraic varieties. 
Thus, 
$$
X=\dirlim G_{n}\cdot x\cong\dirlim G_{n}/G_{n}^{x}\cong \dirlim G_{n}/ \dirlim  G_{n}^{x}=G/ G^{x}.
$$ 
has the structure of $G$-homogeneous ind-variety.  It is easy to see 
that the choice of any other point $y\in X$ produces an equivalent ind-variety structure on $X$.  

\qed


\bigskip
For a point $\lambda\in\fg^{*}$, we denote its coadjoint orbit by $G\cdot\lambda$.  Using Proposition \ref{prop:homog}, we can endow $G\cdot\lambda$ with the structure of a $G$-homogeneous ind-variety. 
\begin{cor}\label{c:coadjointorbit}
Let $\lambda=(\lambda_{1},\dots, \lambda_{n},\dots, \lambda_{k},\dots )\in\fg^{*}$, with $\lambda_{k}\in\fg_{k}^{*}$, and let $G\cdot\lambda\subset\fg^{*}$ denote the coadjoint orbit through $\lambda$.  
Then the isotropy group of $\lambda$, $G^{\lambda}$ is given by 
\begin{equation}\label{eq:Gniso}
G^{\lambda}=\dirlim G_{n}^{\lambda}\mbox{ where } G_{n}^{\lambda}=G^{\lambda}\cap G_{n}=\bigcap_{k\geq n} G_{n}^{\lambda_{k}},
\end{equation}
where $G_{n}^{\lambda_{k}}$ is the isotropy group 
of $\lambda_{k}\in\fg^{*}_{k}$ under the coadjoint action of 
$G_{n}\subset G_{k}$.

 Thus, $G_{n}^{\lambda}$ is closed, so that 
\begin{equation}\label{eq:coadorbit}
G\cdot\lambda=\dirlim G_{n}\cdot\lambda\cong \dirlim G_{n}/G_{n}^{\lambda}\cong G/G^{\lambda}
\end{equation}
has the structure of a $G$-homogeneous ind-variety.  For any $\mu\in G\cdot\lambda$, we
have
\begin{equation}\label{eq:coadtan}
T_{\mu}(G\cdot\lambda)=\fg/\fg^{\mu}=\dirlim\fg_{n}/\fg_{n}^{\mu}=T_{\mu}(G\cdot\mu),
\end{equation}
where $\fg^{\mu}=Lie(G^{\mu})$ with $G^{\mu}\subset G$ the isotropy group of $\mu$.

\end{cor}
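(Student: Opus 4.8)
The plan is to deduce every assertion from the two halves of Proposition \ref{prop:homog}, so that the only substantive work is to identify the isotropy group $G^{\lambda}$ explicitly and to verify that it is closed. First I would compute $G_n^{\lambda}:=G^{\lambda}\cap G_n$ directly from the coadjoint formula (\ref{eq:coad}). For $g\in G_n$, the components of $\Ad^{*}(g)\cdot\lambda$ indexed by $k\geq n$ are $\Ad^{*}(g)\cdot\lambda_k$, while those indexed by $j<n$ are restrictions of $\Ad^{*}(g)\cdot\lambda_n$ to $\fg_j$. Hence $g$ fixes $\lambda$ precisely when $\Ad^{*}(g)\cdot\lambda_k=\lambda_k$ for every $k\geq n$; the equations for $j<n$ are then automatic, since $\Ad^{*}(g)\cdot\lambda_n=\lambda_n$ and $\lambda_n|_{\fg_j}=\lambda_j$ by the compatibility condition defining $\invlim\fg_n^{*}$. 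This yields $G_n^{\lambda}=\bigcap_{k\geq n}G_n^{\lambda_k}$, which is (\ref{eq:Gniso}).

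Next I would establish closedness. Each $G_n^{\lambda_k}$ is the stabilizer of the point $\lambda_k$ under the algebraic action of $G_n\subset G_k$ on $\fg_k^{*}$, hence a closed subgroup of $G_n$; an arbitrary intersection of closed sets is closed, so $G_n^{\lambda}$ is closed in $G_n$. Since $G^{\lambda}\cap G_n=G_n^{\lambda}$ is closed for every $n$, the subgroup $G^{\lambda}$ is closed in the ind-topology on $G$. Proposition \ref{prop:homog} then applies and gives $G^{\lambda}=\dirlim G_n^{\lambda}$ as a direct limit group, with $G/G^{\lambda}$ a $G$-homogeneous ind-variety.

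To obtain (\ref{eq:coadorbit}), I would note that for each $n$ the orbit-stabilizer correspondence over a field of characteristic zero gives an isomorphism of varieties $G_n\cdot\lambda\cong G_n/G_n^{\lambda}$ (the bijective orbit morphism is an isomorphism since $\mathrm{char}\,F=0$), exactly as in the converse half of Proposition \ref{prop:homog}. Passing to direct limits and invoking the forward direction of that proposition gives $G\cdot\lambda=\dirlim G_n\cdot\lambda\cong\dirlim G_n/G_n^{\lambda}\cong G/G^{\lambda}$, so that the ind-variety structure on the orbit $G\cdot\lambda$ agrees with the canonical one supplied by Proposition \ref{prop:homog}.

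Finally, for the tangent space (\ref{eq:coadtan}), I would apply the identification $T_{gG^{\lambda}}(G/G^{\lambda})\cong\fg/\Ad(g)\fg^{\lambda}$ from Proposition \ref{prop:homog}. Writing $\mu=\Ad^{*}(g)\cdot\lambda$ and using the equivariance $\fg^{\mu}=\Ad(g)\fg^{\lambda}$ of isotropy subalgebras, this becomes $T_{\mu}(G\cdot\lambda)=\fg/\fg^{\mu}=\dirlim\fg_n/\fg_n^{\mu}$. Since $\mu\in G\cdot\lambda$ forces $G\cdot\mu=G\cdot\lambda$, the base-point independence asserted in Proposition \ref{prop:homog} identifies this with $T_{\mu}(G\cdot\mu)$. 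The only points requiring genuine care are the isotropy computation of the first paragraph—verifying that the constraints coming from the lower indices $j<n$ are redundant—and the equivariance $\fg^{\mu}=\Ad(g)\fg^{\lambda}$; everything else is a formal consequence of Proposition \ref{prop:homog}.
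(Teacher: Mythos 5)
Your proposal is correct and follows essentially the same route as the paper: verify the identity $G_{n}^{\lambda}=\bigcap_{k\geq n}G_{n}^{\lambda_{k}}$ directly from the coadjoint action formula (\ref{eq:coad}), and then deduce everything else from the two halves of Proposition \ref{prop:homog}. You simply make explicit several steps the paper leaves implicit (the redundancy of the conditions at indices $j<n$, the closedness of $G_{n}^{\lambda}$ as an intersection of stabilizers of algebraic actions, and the equivariance $\fg^{\mu}=\Ad(g)\fg^{\lambda}$ for the tangent space), which is consistent with the paper's one-line proof.
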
 
\begin{proof}
We need only verify that 
\begin{equation}\label{eq:isotropy}
G_{n}^{\lambda}=\bigcap_{k\geq n} G_{n}^{\lambda_{k}}, 
\end{equation}
since the other statements of the corollary then follow immediately from Proposition \ref{prop:homog}.  But (\ref{eq:isotropy}) follows from the definition of the coadjoint action in Equation \ref{eq:coad}.  
\qed  
\end{proof}

\section{Symplectic Foliation of $\fg^{*}$.}\label{s:leaves}

\subsection{Kostant-Kirillov form}  Throughout this section, let $G=\dirlim G_{n}$ be an (affine) direct limit group, with $G_{n}$ a connected, complex, affine algebraic group.  Let $\lambda=(\lambda_{1},\dots, \lambda_{n},\dots,\lambda_{k},\dots )$ be an element of the dual $\fg^{*}=\invlim \fg_{n}^{*}$ of the Lie algebra $\fg=\dirlim\fg_n$ of $G$.  Since $G_{n}$ is connected for each $n$, the coadjoint orbit $G\cdot\lambda=\dirlim G_{n}\cdot\lambda$ is irreducible.  In this section, we develop an analogue of the Kostant-Kirillov form on $G\cdot\lambda$.

\medskip

We now construct a $2$-form on $G\cdot\lambda$.  That is to say, that for each $\mu\in G\cdot\lambda$, we construct an element 
$(\omega_{\infty})_{\mu}\in (\wedge^{2} T_{\mu}(G\cdot\lambda))^{*}$, which 
is closed with respect to a natural exterior derivative on $(\wedge^{2} T(G\cdot\lambda))^{*}$.  
By Equation \ref{eq:coadtan}, it suffices to define $\omega_{\infty}$ at $\mu=\lambda$.  

For each $n\in\mathbb{N}$, we have a natural projection $p_{n}: G_{n}\cdot\lambda\cong G_{n}/G_{n}^{\lambda}\to G_{n}/G_{n}^{\lambda_{n}}\cong G_{n}\cdot\lambda_{n}$, where 
$G_{n}\cdot\lambda_{n}\subset \fg_{n}^{*}$ is the $G_{n}$-coadjoint orbit of $\lambda_{n}\in\fg_{n}^{*}$.  
Consider the diagram 
\begin{equation}\label{eq:orbdiag}
\begin{array}{ccc}
G_{n}\cdot \lambda& \stackrel{\iota_{n,n+1}}{\hookrightarrow} & G_{n+1}\cdot \lambda\\
\downarrow p_{n} &    & \downarrow p_{n+1}\\
G_{n}\cdot \lambda_{n} &  &  G_{n+1}\cdot \lambda_{n+1}.
\end{array}
\end{equation}
The map $p_{n}: G_{n}\cdot \lambda\to G_{n}\cdot \lambda_{n}$ is easily seen to be a surjective submersion with differential at $\lambda\in\fg^{*}$
$$(dp_{n})_{\lambda}: \fg_{n}/\fg_{n}^{\lambda}\to \fg_{n}/\fg_{n}^{\lambda_{n}} \mbox{ given by } (dp_{n})_{\lambda}(X+\fg_{n}^{\lambda})=X+\fg_{n}^{\lambda_{n}}, $$ for $X\in\fg_{n}$.  For $n\in\mathbb{N}$, let $\omega_{n}$ be the Kostant-Kirillov form on the coadjoint orbit $G_{n}\cdot\lambda_{n}$.  We claim that 
\begin{equation}\label{eq:match}
d\iota_{n,n+1}^{*}(dp_{n+1}^{*}\omega_{n+1})_{\lambda}=(dp_{n}^{*}\omega_{n})_{\lambda}.  
\end{equation}
Indeed, let $X+\fg_{n}^{\lambda}, \, Y+\fg_{n}^{\lambda}\in\fg_{n}/\fg_{n}^{\lambda}$.  It is straightforward to verify that
\begin{equation*}
d\iota_{n,n+1}^{*}(dp_{n+1}^{*}\omega_{n+1})_{\lambda}(X+\fg_{n}^{\lambda}, Y+\fg_{n}^{\lambda})=\lambda_{n+1}([X,Y]).
\end{equation*}
Similarly, $dp_n^*\omega_n(X+\fg_{n}^{\lambda}, Y+\fg_{n}^{\lambda})=\lambda_{n}([X,Y])$.  Since $\lambda_{n+1}|_{\fg_{n}}=\lambda_{n}$, these expressions agree.  Thus, by Lemma \ref{wedgelimlemma}, we can define an element of the inverse limit $\invlim \wedge^{2} T^{*}(G_{n}\cdot \lambda)\cong (\wedge^{2} \dirlim T(G_{n}\cdot \lambda))^{*}=(\wedge^{2} T(G\cdot\lambda))^{*}$ by 
\begin{equation}\label{eq:newform}
\omega_{\infty}:=\invlim dp_{n}^{*}\omega_{n}=(dp_1^*\omega_{1},dp_2^*\omega_2,dp_3^*\omega_3,\ldots).  
\end{equation}
By Lemma \ref{wedgelimlemma}, the alternating $k$-forms on $T(G\cdot\lambda)$ can be identified with elements of the space $\displaystyle{\invlim_n\bigwedge\!\!{}^k\,T^*(G_n\cdot\lambda)}$.  We consider the following bicomplex, where $d_{k,n}$ are the exterior derivatives and the $\wedge^k d\iota_{n,n+1}^*$ are obtained from pullbacks of the transition maps $\iota_{n,n+1}:\ G_{n}\rightarrow G_{n+1}$ in the directed system defining $G$:
\begin{equation}\label{eq:bicomplex}
\xymatrix{\vdots\ar[d]_{d_{k-1,1}}&\vdots\ar[d]_{d_{k-1,2}}&\vdots\ar[d]_{d_{k-1,3}}\\
\wedge^kT^*(G_1\cdot\lambda)\ar[d]_{d_{k,1}}&\wedge^kT^*(G_2\cdot\lambda)\ar[l]_{\ \ \wedge^{k} d\iota_{12}^*}\ar[d]_{d_{k,2}}&\wedge^kT^*(G_3\cdot\lambda)\ar[l]_{\ \ \wedge^{k}d\iota_{23}^*}\ar[d]_{d_{k,3}}&\cdots\ar[l]_{\ \ \quad\quad\wedge^{k} d\iota_{34}^*}\\
\wedge^{k+1}T^*(G_1\cdot\lambda)\ar[d]_{d_{k+1,1}}&\wedge^{k+1}T^*(G_2\cdot\lambda)\ar[l]_{\ \ \wedge^{k+1}d\iota_{12}^*}\ar[d]_{d_{k+1,2}}&\wedge^{k+1}T^*(G_3\cdot\lambda)\ar[l]_{\ \ \wedge^{k+1}d\iota_{23}^*}\ar[d]_{d_{k+1,3}}&\cdots\ar[l]_{\ \ \quad\quad\wedge^{k+1}d\iota_{34}^*}\\\wedge^{k+2}T^*(G_1\cdot\lambda)\ar[d]_{d_{k+2,1}}&\wedge^{k+2}T^*(G_2\cdot\lambda)\ar[l]_{\ \ \wedge^{k+2}d\iota_{12}^*}\ar[d]_{d_{k+2,2}}&\wedge^{k+2}T^*(G_3\cdot\lambda)\ar[l]_{\ \ \wedge^{k+2}d\iota_{23}^*}\ar[d]_{d_{k+2,3}}&\cdots\ar[l]_{\ \ \quad\quad\wedge^{k+2}d\iota_{34}^*}\\
\vdots&\vdots&\vdots}
\end{equation}
It is straightforward to verify that all the squares in the bicomplex (\ref{eq:bicomplex}) commute.  Thus, there is a map
$$d_{k,\infty}:\ \invlim_n\bigwedge\!\!{}^k\,T^*(G_n\cdot \lambda)\rightarrow\invlim_n\bigwedge\!\!{}^{k+1}\,T^*(G_n\cdot \lambda),$$
for each $k\geq 0$, given by
\begin{equation}\label{eq:kdiff}
d_{k,\infty}(\alpha_1,\alpha_2,\alpha_3,\ldots)=(d_{k,1}(\alpha_1),d_{k,2}(\alpha_2),d_{k,3}(\alpha_3),\ldots),
\end{equation}
where $(\alpha_1,\alpha_2,\alpha_3,\ldots)\in\displaystyle{\invlim_n\bigwedge^kT^*(G_n\cdot \lambda)}$.  
In particular, $d_{0,\infty}: \mathcal{O}(G\cdot\lambda)\to T^{*}(G\cdot\lambda)$ coincides with the usual notion of the differential for functions
in $\mathcal{O}(G\cdot\lambda)=\invlim \mathcal{O}(G_{n}\cdot\lambda)$.  

The $2$-form $\omega_{\infty}\in (\wedge^{2}T(G\cdot\lambda))^{*}$ induces a map $\widetilde{\omega_{\infty}}: T(G\cdot\lambda)\to T^{*}(G\cdot\lambda)$ 
given by:
\begin{equation}\label{eq:bundlemap}
(\widetilde{\omega_{\infty}})_{\mu}(Y)(Z)=\omega_{\infty,\mu}(Y,Z)\mbox{ for } \mu\in G\cdot\lambda, \, Y,\, Z\in T_{\lambda}(G\cdot\mu)=T_{\mu}(G\cdot\mu).
\end{equation}

Following \cite{OR}, we call $\omega_{\infty}$ a \emph{weak symplectic} form on $G\cdot\lambda$ if the following two conditions are satisfied:

(1) The form $\omega_{\infty}$ is closed with respect to the differential $d_{2,\infty}$ defined in Equation \ref{eq:kdiff}.

(2) For each $\mu\in G\cdot\lambda$, the map $(\widetilde{\omega_{\infty}})_{\mu}$ defined in (\ref{eq:bundlemap}) is an injective, regular linear map 
from the linear ind-variety $\displaystyle{T_{\mu}(G\cdot\mu)=\dirlim T_{\mu}(G_{n}\cdot\mu_{n})}$ to the linear provariety $\displaystyle{T^{*}_{\mu}(G\cdot\mu)=\invlim T^{*}_{\mu}(G_{n}\cdot\mu_{n})}$ (see Remark \ref{r:isaprovariety}).


\begin{prop}\label{p:symplecticform}
For $\lambda\in\fg^{*},$ the coadjoint orbit $(G\cdot\lambda,\omega_{\infty})$ is a weak symplectic ind-variety. If we identify $T_{\lambda}(G\cdot\lambda)\cong \fg/\fg^{\lambda}$, then $\omega_{\infty}$ is given by the formula
\begin{equation}\label{eq:formula}
(\omega_{\infty})_{\lambda}(X+\fg^{\lambda}, Y+\fg^{\lambda})=\lambda([X,Y]),
\end{equation}
for $X,\, Y\in\fg$. 
\end{prop}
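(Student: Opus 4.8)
The plan is to establish the explicit formula (\ref{eq:formula}) first and then to deduce the two defining conditions of a weak symplectic form directly from it. By (\ref{eq:coadtan}) the tangent space satisfies $T_{\mu}(G\cdot\lambda)=T_{\mu}(G\cdot\mu)$ and depends only on $\mu$; since $\omega_{\infty}$ is assembled at each point by the same recipe used at $\lambda$ (with $\lambda$ replaced by $\mu$), it suffices to argue at $\mu=\lambda$ and transport the conclusion around the orbit. For the formula itself, recall from (\ref{eq:newform}) that $\omega_{\infty}=(dp_{1}^{*}\omega_{1},dp_{2}^{*}\omega_{2},\ldots)$ is the element of $\invlim\wedge^{2}T^{*}(G_{n}\cdot\lambda)\cong(\wedge^{2}T(G\cdot\lambda))^{*}$ produced by Lemma \ref{wedgelimlemma}. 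Under that isomorphism one evaluates $\omega_{\infty}$ on classes $X+\fg^{\lambda},\,Y+\fg^{\lambda}$ with $X,Y\in\fg_{n}$ via the $n$-th component; combining the identity $dp_{n}^{*}\omega_{n}(X+\fg_{n}^{\lambda},Y+\fg_{n}^{\lambda})=\lambda_{n}([X,Y])$ recorded just before (\ref{eq:newform}) with $\lambda_{n}=\lambda|_{\fg_{n}}$ and $[X,Y]\in\fg_{n}$ gives $(\omega_{\infty})_{\lambda}(X+\fg^{\lambda},Y+\fg^{\lambda})=\lambda([X,Y])$, which is (\ref{eq:formula}).

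Condition (1) is then immediate. By the definition (\ref{eq:kdiff}) of $d_{2,\infty}$, the $n$-th component of $d_{2,\infty}\omega_{\infty}$ is $d_{2,n}(dp_{n}^{*}\omega_{n})=dp_{n}^{*}(d\omega_{n})$, because exterior differentiation commutes with pullback along the submersion $p_{n}$. Each Kostant-Kirillov form $\omega_{n}$ is closed, so every component vanishes and $d_{2,\infty}\omega_{\infty}=0$.

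The substance of the proposition is Condition (2). Here I would use the formula together with the $k=1$ case of Lemma \ref{wedgelimlemma}, namely
\[
T^{*}_{\mu}(G\cdot\mu)=\invlim T^{*}_{\mu}(G_{n}\cdot\mu_{n})\cong\Big(\dirlim T_{\mu}(G_{n}\cdot\mu_{n})\Big)^{*}=\big(T_{\mu}(G\cdot\mu)\big)^{*}.
\]
Since the target is the full dual, $(\widetilde{\omega_{\infty}})_{\mu}(X+\fg^{\mu})=0$ is equivalent to $\mu([X,Z])=0$ for all $Z\in\fg$; by the description of the isotropy algebra $\fg^{\mu}=\{X\in\fg:\ \ad^{*}(X)\mu=0\}$ coming from Corollary \ref{c:coadjointorbit}, this says exactly $X\in\fg^{\mu}$, i.e. $X+\fg^{\mu}=0$, so $(\widetilde{\omega_{\infty}})_{\mu}$ is injective. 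Regularity is inherited from the finite-dimensional level: each contraction $\widetilde{dp_{n}^{*}\omega_{n}}$ is a regular (in fact linear) bundle map, and $(\widetilde{\omega_{\infty}})_{\mu}$ is the induced map on the limits, so its composite with every canonical projection $T^{*}_{\mu}(G\cdot\mu)\to T^{*}_{\mu}(G_{n}\cdot\mu_{n})$ is regular.

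The step I expect to require the most care is the bookkeeping in Condition (2): one must verify that contracting the inverse-limit $2$-form $\omega_{\infty}$ against a fixed tangent vector genuinely produces a compatible sequence in $\invlim T^{*}_{\mu}(G_{n}\cdot\mu_{n})$, and that the mixed direct-limit/inverse-limit identification $T^{*}=(T)^{*}$ is applied correctly, so that vanishing of $(\widetilde{\omega_{\infty}})_{\mu}(X+\fg^{\mu})$ in the inverse limit is equivalent to its vanishing against all of $\fg/\fg^{\mu}$. Once this is in place, injectivity reduces cleanly to the definition of $\fg^{\mu}$, and no further analytic input is needed.
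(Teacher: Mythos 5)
Your proposal is correct and follows essentially the same route as the paper's proof: the formula (\ref{eq:formula}) read off from the construction of $\omega_{\infty}$ and the identity $dp_{n}^{*}\omega_{n}(X+\fg_{n}^{\lambda},Y+\fg_{n}^{\lambda})=\lambda_{n}([X,Y])$, closedness checked componentwise through the bicomplex, regularity reduced via the universal properties (Propositions \ref{prop:indvariety} and \ref{prop:provariety}) to finite-dimensional linear maps, and injectivity from the isotropy description. Your phrasing of injectivity via $\fg^{\mu}=\{X\in\fg:\ \ad^{*}(X)\mu=0\}$ is just the global repackaging of the paper's component computation $X\in\bigcap_{k\geq n}\fg_{n}^{\lambda_{k}}=\fg_{n}^{\lambda}$ from (\ref{eq:Gniso}), and your flagged bookkeeping point (that the contraction lands compatibly in $\invlim(\fg_{n}/\fg_{n}^{\lambda})^{*}\cong(\fg/\fg^{\lambda})^{*}$) is exactly what the paper's prior verification (\ref{eq:match}) together with Lemma \ref{wedgelimlemma} supplies.
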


\noindent
\begin{proof}
Equation \ref{eq:formula} follows directly from the definition of $\omega_{\infty}$.  We now show that $\omega_{\infty}$ satisfies (2) in the definition of a weak symplectic form.  Without loss of generality, we may assume $\mu=\lambda$.  Consider the map $\omegainftylambdatilde$ defined in (\ref{eq:bundlemap}).  We first show that $\omegainftylambdatilde$ is injective.  Suppose that $X\in\fg_{n}$ is such that $(\omega_{\infty})_{\lambda}(X+\fg_{n}^{\lambda}, Y+\fg_{k}^{\lambda})=0$ for all $Y\in\fg_{k}$ and $k\geq 1$.  For $k\geq n$, Equation \ref{eq:formula} implies that $\lambda_{k}([X,Y])=0$ for all $Y\in\fg_{k}$.  Thus, $X\in\bigcap_{k\geq n} \fg_{n}^{\lambda_{k}}=\fg_{n}^{\lambda}$. 

We now show that $\omegainftylambdatilde$ is a morphism.  By Propositions \ref{prop:provariety} and \ref{prop:indvariety}, $\omegainftylambdatilde$ is a morphism if for every $m,\, k\in\mathbb{N}$, the following composition of maps
$$
\fg_{m}/\fg_{m}^{\lambda}\hookrightarrow \fg/\fg^{\lambda}\stackrel{\omegainftylambdatilde}{\rightarrow}\invlim_{n}T_{\lambda}^{*}(\fg_{n}/\fg_{n}^{\lambda})\stackrel{p_{k}}{\rightarrow} T_{\lambda}^{*}(\fg_{k}/\fg_{k}^{\lambda})
$$
is a morphism of finite dimensional affine varieties.  This is an elementary computation using (\ref{eq:formula}).  

We now show that the 2-form $\omega_{\infty}$ is closed with respect to the differential $d_{2,\infty}$ defined in (\ref{eq:kdiff}).  Since the Kostant-Kirillov form $\omega_n$ on $G_n\cdot\lambda_n$ is closed, we have $d_{n}\omega_n=0$, where $d_n:\ \bigwedge^2T^*(G_n\cdot\lambda_n)\rightarrow\bigwedge^{3}T^*(G_n\cdot\lambda_n)$ is the exterior derivative on $G_n\cdot\lambda_n=G_n/G_n^{\lambda_n}$.  Thus,
\begin{align*}
d_{2,\infty}&=d_{2,\infty}(dp_1^*\omega_1,dp_2^*\omega_2,dp_3^*\omega_3,\ldots)\\
&=(d_{2,1}(dp_1^*\omega_1),\,d_{2,2}(dp_2^*\omega_2),\,d_{2,3}(dp_3^*\omega_3),\,\ldots)\\
&=(dp_1^*(d_1\omega_1),\,dp_2^*(d_2\omega_2),\,dp_3^*(d_3\omega_3),\ldots)\\
&=(dp_1^*(0),dp_2^*(0),dp_3^*(0),\ldots)\\
&=0,
\end{align*}
so the $2$-form $\omega_\infty$ is closed.\qed
\end{proof}


\begin{rem}\label{r:reductive}{\em 
Suppose that $G=\dirlim G_{n}$, where $G_{n}$ is a reductive algebraic group.  Then $\hbox{Lie}(G_{n})=\fg_{n}$ is reductive with a non-degenerate, $\Ad(G_{n})$-invariant form $\ll \cdot,\cdot\gg $, which allows us to identify $\fg_{n}$ with $\fg_{n}^{*}$.  The induced isomorphism $\fg^*\cong\fgtilde=\invlim\fg_n$ is equivariant with respect to the coadjoint action of $G$ on $\fg^*$ and the adjoint action of $G$ on $\fgtilde$:
$$\Ad (g)(x_1,x_2,x_3,\ldots)=\left(\Ad (g)x_n|_{\fg_1},\ldots,\Ad(g)x_n|_{\fg_{n-1}},\Ad (g)x_n,\Ad(g)x_{n+1},\ldots\right),$$
where $g\in G_n$.  In particular, we can transfer the symplectic form on coadjoint orbits in $\fg^*$ to adjoint orbits in $\fgtilde$.
 } 
\end{rem}

In the next theorem, we will consider the inclusion of the coadjoint orbits into the provariety $\fg^*$ and the compatibility of the symplectic structure on coadjoint orbits with the Poisson structure on $\fg^*$.  Consider the natural inclusion
\begin{equation}\label{eq:biginclusion}
i=\dirlim i_n:\ \dirlim G_n\cdot\lambda\hookrightarrow\fg^{*},  
\end{equation}
where $i_{n}:G_{n}/G_{n}^{\lambda}\hookrightarrow \fg^{*}$ is given by 
\begin{equation}\label{eq:n-inclusion}
i_{n}(g_{n}G_{n}^{\lambda})
=(\Ad^{*}(g_{n})\lambda_{n}|_{\fg_{1}},\dots, (\Ad^{*}(g_{n})\lambda_{n})|_{\fg_{n-1}},\Ad^{*}(g_{n})\lambda_{n},\Ad^{*}(g_{n})\lambda_{n+1},\dots).
\end{equation}

Via the map $i$, the coadjoint orbits $G\cdot \lambda=\dirlim G_n\cdot\lambda$ are irreducible, immersed ind-subvarieties that are tangent to the characteristic distribution $\mathfrak{X}(\fg^*)$ defined in (\ref{eq:char}).  More precisely, we have the following theorem. 
\begin{thm}\label{thm:symplecticleaves}
{\bf \rm (1)} \ The natural inclusion $i:\ G\cdot\lambda\hookrightarrow \fg^*$ is an injective immersion of the irreducible ind-variety $G\cdot\lambda$ into the provariety $\fg^*$.

{\bf \rm (2)} \ The coadjoint orbits are tangent to the characteristic distribution of the Poisson provariety $(\fg^*,\pi_{\fg^*})$:
\begin{equation}\label{eq:chardist}
  (di)_{\lambda}(T_{\lambda}(G\cdot\lambda))=\mathfrak{X}(\fg^{*})_{\lambda},
  \end{equation}  

{\bf \rm (3)} \ The symplectic form $\omega_{\infty}$ on $G\cdot\lambda$ is consistent with the Poisson structure of $\fg^{*}$, i.e.
 \begin{equation}\label{eq:consistent} 
 \omega_{\infty, \lambda}(Y,Z)=\bivecfglambda([\anchorfglambda]^{-1}\circ di_{\lambda}(Y), [\anchorfglambda]^{-1}\circ di_{\lambda}(Z))
 \end{equation}
 where $[\anchorfglambda]$ is the bijective morphism $[\anchorfglambda]:\ T_{\lambda}^{*}(\fg^{*})/\mbox{Ker }\anchorfglambda\to \mathfrak{X}(\fg^{*})_{\lambda}$ induced by the anchor map $\anchorfglambda$ (see Equation \ref{eq:anchor}). 
 \end{thm}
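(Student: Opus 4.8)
The plan is to reduce all three parts to a single structural fact: under the identification $T^{*}_{\lambda}(\fg^{*})\cong\fg$ of (\ref{eq:cotan}), the kernel of the anchor map coincides with the isotropy subalgebra, $\Ker\anchorfglambda=\fg^{\lambda}$, and the differential of the orbit inclusion agrees, up to sign, with the anchor map. The first identity is immediate from the material already in hand: Proposition \ref{p:injective} gives $\Ker\anchorfglambda=\dirlim_{n}\bigcap_{k\geq n}\fg_{n}^{\lambda_{k}}$, while Corollary \ref{c:coadjointorbit} identifies $\fg^{\lambda}=\dirlim_{n}\fg_{n}^{\lambda}$ with $\fg_{n}^{\lambda}=\bigcap_{k\geq n}\fg_{n}^{\lambda_{k}}$, so the two direct limits agree term by term. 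For the second, I would differentiate the level-$n$ inclusion (\ref{eq:n-inclusion}) at the base coset along the curve $\exp(tX)G_{n}^{\lambda}$ with $X\in\fg_{n}$; comparing the resulting components with (\ref{eq:anchor}) shows $(di_{n})_{\lambda}(X+\fg_{n}^{\lambda})=-\anchorfglambda(X)$, and passing to the direct limit yields $di_{\lambda}(X+\fg^{\lambda})=-\anchorfglambda(X)$ on $T_{\lambda}(G\cdot\lambda)=\fg/\fg^{\lambda}$.

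For Part (1), injectivity of $i$ as a set map is the standard fact that the orbit map $gG^{\lambda}\mapsto\Ad^{*}(g)\lambda$ separates cosets. That $i=\dirlim i_{n}$ is a morphism into the provariety $\fg^{*}$ follows from the universal property of Proposition \ref{prop:provariety}, since each component $G\cdot\lambda\to\fg_{k}^{*}$ is a direct limit of the algebraic maps $G_{n}/G_{n}^{\lambda}\to\fg_{k}^{*}$. It then remains to see that $i$ is an immersion. By the structural fact, $\Ker di_{\lambda}=\{X+\fg^{\lambda}:X\in\Ker\anchorfglambda=\fg^{\lambda}\}=0$, so $di_{\lambda}$ is injective; and since $i$ is $G$-equivariant, $G$ acts on $\fg^{*}$ by linear automorphisms, and $T_{\mu}(G\cdot\lambda)=T_{\mu}(G\cdot\mu)$ by (\ref{eq:coadtan}), the identical computation at any $\mu\in G\cdot\lambda$ gives $\Ker di_{\mu}=0$.

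Part (2) is then immediate: the image of $di_{\lambda}$ is $\{-\anchorfglambda(X):X\in\fg\}=\Im\anchorfglambda=\mathfrak{X}(\fg^{*})_{\lambda}$ by (\ref{eq:char}), which is exactly (\ref{eq:chardist}). For Part (3), I would evaluate both sides of (\ref{eq:consistent}) on $Y=X+\fg^{\lambda}$ and $Z=W+\fg^{\lambda}$. The left-hand side equals $\lambda([X,W])$ by the formula (\ref{eq:formula}) of Proposition \ref{p:symplecticform}. On the right, $\Ker\anchorfglambda=\fg^{\lambda}$ makes the induced map $[\anchorfglambda]:\fg/\fg^{\lambda}\to\mathfrak{X}(\fg^{*})_{\lambda}$ a bijection, and the structural fact gives $[\anchorfglambda]^{-1}\circ di_{\lambda}(Y)=-X+\fg^{\lambda}$, similarly for $Z$; moreover $\bivecfglambda$ descends to $\fg/\fg^{\lambda}$ precisely because $\fg^{\lambda}$ is the radical of the pairing $\bivecfglambda(\alpha,\beta)=\langle\anchorfglambda(\alpha),\beta\rangle$. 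Using the Lie--Poisson bracket (\ref{eq:Lie-Poisson}) in the limit, $\bivecfglambda(-X+\fg^{\lambda},-W+\fg^{\lambda})=\lambda([-X,-W])=\lambda([X,W])$, so the two sign changes cancel and the two sides agree.

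The only genuinely delicate point is honest limit bookkeeping: $T_{\lambda}(G\cdot\lambda)=\fg/\fg^{\lambda}$ is a direct limit of finite-dimensional spaces (a linear ind-variety), while the target $T_{\lambda}(\fg^{*})=\fg^{*}$ is an inverse limit (a provariety), so one must check that $di_{\lambda}$ is a regular morphism between them rather than merely a linear map of vector spaces. I expect this to present no new obstacle, however, since it is the same level-by-level finite-dimensional verification already carried out for $\widetilde{\omega_{\infty}}$ in the proof of Proposition \ref{p:symplecticform}. Once the identity $\Ker\anchorfglambda=\fg^{\lambda}$ and the sign-compatibility of $di_{\lambda}$ with the anchor map are in place, everything else is formal.
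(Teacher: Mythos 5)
Your proof is correct and follows essentially the same route as the paper's: the level-by-level computation of $(di_{n})_{\lambda}$ from (\ref{eq:n-inclusion}), comparison with the anchor map (\ref{eq:anchor}), the identification $\mbox{Ker }\anchorfglambda=\fg^{\lambda}$ via Proposition \ref{p:injective} and (\ref{eq:Gniso}), and evaluation of both sides of (\ref{eq:consistent}) using (\ref{eq:formula}) and $\bivecfglambda=\invlim\pi_{\fg_{n}^{*},\lambda_{n}}$. The one divergence is in your favor: taking the paper's own formulas (\ref{eq:anchor}) and (\ref{eq:diff}) at face value, your computation $[\anchorfglambda]^{-1}\circ di_{\lambda}=-\mathrm{id}$ on $\fg/\fg^{\lambda}$ is the accurate bookkeeping (the paper asserts this composition is the identity), and, as you observe, the two sign changes cancel in the antisymmetric bivector, so the discrepancy is harmless to (\ref{eq:consistent}).
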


\noindent
\begin{proof}
Written explicitly, the inclusion $i_n$ in (\ref{eq:n-inclusion}) is simply the map $\invlim_{j} i_{nj}$, where $i_{nj}:\ G_{n}/G_{n}^{\lambda}\to\fg_{j}^{*}$ is given by
\begin{equation}\label{eq:inj1}
i_{nj}:\ g_{n}G_{n}^{\lambda}\mapsto \Ad^{*}(g_{n})\lambda_{j},
\end{equation}
for all $j\geq n$, and 
\begin{equation}\label{eq:inj2}
i_{nj}:\ g_{n}G_{n}^{\lambda}\mapsto (\Ad^{*}(g_{n})\lambda_{n})|_{\fg_{j}},
\end{equation}
for $j<n$.  By Propositions \ref{prop:indvariety} and \ref{prop:provariety}, the map $i:\ G\cdot\lambda\hookrightarrow\fg^{*}$ is a morphism if 
the maps $i_{jn}$ are morphisms for all $j,n\in\mathbb{N}$.  This follows from the universal property of the geometric quotient $G_{n}/G_{n}^{\lambda}$.   

By Propositions \ref{prop:indvariety} and \ref{prop:provariety}, it follows that the differential 
$$
di:\ T(G\cdot\lambda)=\dirlim_{n} T(G_{n}\cdot\lambda)\to T(\fg^{*})=\invlim_{j} T(\fg_{j}^{*})\mbox{ is precisely } di=\dirlim_{n}\invlim_{j} di_{nj}.
$$
Using Equations \ref{eq:inj1} and \ref{eq:inj2}, we see that $(di_{n})_{\lambda}:\fg_{n}/\fg_{n}^{\lambda}\to T_{\lambda}(\fg^{*})$ is given by 
\begin{equation}\label{eq:diff}
(di_{n})_{\lambda}(X_{n}+\fg_{n}^{\lambda})=(\ad^{*}(X_{n})\lambda_{1}|_{\fg_{1}},\dots, (\ad^{*}(X_{n})\lambda_{n})|_{\fg_{n-1}},\ad^{*}(X_{n})\lambda_{n},\dots, \ad^{*}(X_{n})\lambda_{k},\dots).
\end{equation}
From Equation \ref{eq:diff}, it follows that $(di)_{\lambda}=\dirlim (di_{n})_{\lambda}$ is injective.  Thus, $G\cdot\lambda$ is an immersed ind-subvariety of $\fg^{*}$.


Part (2) follows directly from Equations \ref{eq:anchor} and \ref{eq:diff}.  
Indeed,
\begin{equation*}
\begin{split}
\mathfrak{X}(\fg^{*})_{\lambda}&=\mbox{Im }\wpi_{\fg^{*},\lambda}\\
&=\dirlim_{n}((\ad^{*}(\fg_{n})\lambda_{n}|_{\fg_{1}},\dots, (\ad^{*}(\fg_{n})\lambda_{n})|_{\fg_{n-1}}, \ad^{*}(\fg_{n})\lambda_{n}, \dots, \ad^{*}(\fg_{n})\lambda_{k},\dots)\\
&=di_{\lambda} (T_{\lambda}(G\cdot\lambda)).
\end{split}
\end{equation*}

Finally, we show that (\ref{eq:consistent}) holds.  Without loss of generality, we may assume that
$Y=Y_{m}+\fg_{m}^{\lambda}$ and $Z=Z_{n}+\fg_{n}^{\lambda}$ with $Y_{m}\in\fg_{m}$ , $Z_n\in\fg_n$, and $n\geq m$.  
By (\ref{eq:formula}), the left-hand side of (\ref{eq:consistent}) is 
$\lambda_{n}([Y_{m},Z_{n}]_{n})$, where $[Y_{m}, Z_{n}]_{n}$ denotes the Lie bracket 
of $Y_{m}, Z_{n}$ as elements of $\fg_{n}$.  

To compute the right-hand side of (\ref{eq:consistent}), note that $\mbox{Ker } \anchorfglambda=\fg^{\lambda}$, by (\ref{eq:keranchor}) and  (\ref{eq:Gniso}).  Then (\ref{eq:anchor}) and (\ref{eq:diff}) imply that $[\anchorfglambda]^{-1}\circ di_{\lambda}$ is the identity map on $\fg/\fg^{\lambda}$.
Therefore, 
$$  
\bivecfgmu([\anchorfglambda]^{-1}\circ di_{\lambda}(Y), [\anchorfglambda]^{-1}\circ di_{\lambda}(Z))=\bivecfglambda(Y_{m}+\fg_{m}^{\lambda}, Z_{n}+\fg_{n}^{\lambda}).
$$
Proposition \ref{anchormapprop} implies that $\bivecfglambda=\invlim\pi_{\fg_{n}^{*},\lambda_{n}}$, where
$\pi_{\fg_{n}^{*},\lambda_{n}}$ is the bivector for the Lie-Poisson structure on $\fg_{n}^{*}$ evaluated at $\lambda_{n}$.   
Thus, $\bivecfglambda(Y_{m}+\fg_{m}^{\lambda}, Z_{n}+\fg_{n}^{\lambda})=\lambda_{n}([Y_{m},Z_{n}]_{n})$, and Equation \ref{eq:consistent} holds. 
\qed
\end{proof}

\bigskip

Equation (\ref{eq:consistent}) lets us define Hamiltonian vector fields for functions on 
$G\cdot\lambda$ obtained as pullbacks of functions on $\fg^{*}$, giving a Poisson algebra structure on the set of such functions.
  The following proposition is a restatement of Proposition 7.2 in \cite{OR}.  The proof given there carries 
over to our case. 
\begin{prop}\label{p:newHams}
Let $\lambda\in\fg^{*}$, let $(G\cdot\lambda,\omega_{\infty})$ be the coadjoint orbit 
through $\lambda$, and let $i:(G\cdot\lambda,\omega_{\infty})\hookrightarrow\fg^{*}$ be the inclusion 
morphism given in Equation (\ref{eq:biginclusion}).  
\begin{enumerate}
\item Let $U\subset\fg^{*}$ be open and suppose $\mu\in i^{-1}(U)\subset G\cdot\lambda$.  Let
$f\in \mathcal{O}_{\fg^{*}}(U)$, so that $f\circ i\in \mathcal{O}_{G\cdot\lambda}(i^{-1}(U))$.  The differential
$d(f\circ i)(\mu)\in T^{*}_{\mu}(G\cdot\lambda)$ is given by: 
$$
d(f\circ i)(\mu)=\omega_{\infty,\mu}(di_{\mu}^{-1}(\xi_{f})_{\mu}, \cdot ).
$$
\item  Let $U\subset\fg^{*}$ be open.  Then $i^{*}\mathcal{O}_{\fg^{*}}(U)\subset  \mathcal{O}_{G\cdot\lambda}(i^{-1}(U))$ has 
the structure of a Poisson algebra with Poisson bracket given by:
$$
\{f\circ i, g\circ i\}_{\infty}(\mu):=\omega_{\infty,\mu}((di_{\mu})^{-1}(\xi_{f})_{\mu},(di_{\mu})^{-1}(\xi_{g})_{\mu}).
$$
The pullback $i^{*}:( \mathcal{O}_{\fg^{*}}(U), \{\cdot, \cdot\})\to (i^{*}\mathcal{O}_{\fg^{*}}(U),\{\cdot, \cdot\}_{\infty})$ is a homomorphism of Poisson algebras.  

\end{enumerate}

\end{prop}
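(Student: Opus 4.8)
The plan is to follow the argument for Proposition 7.2 of \cite{OR}, feeding in the two geometric facts supplied by Theorem \ref{thm:symplecticleaves}: that the coadjoint orbit is tangent to the characteristic distribution (so that $(\xi_f)_\mu$ lies in the image of $di_\mu$), and that $\omega_\infty$ is consistent with the Poisson bivector $\bivecfglambda$ in the sense of (\ref{eq:consistent}). Everything else is a formal manipulation of these identities, localized from global functions to sections over $U$.

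For Part (1), by homogeneity of the orbit (Equation \ref{eq:coadtan}) I would reduce to the basepoint and take $\mu=\lambda$. First I would compute $d(f\circ i)(\lambda)$ by the chain rule for morphisms of ringed spaces: since $i$ is an inclusion, $d(f\circ i)(\lambda)=df_\lambda\circ di_\lambda\in T^*_\lambda(G\cdot\lambda)$. Next, Part (2) of Theorem \ref{thm:symplecticleaves} gives $(\xi_f)_\lambda=\anchorfglambda(df_\lambda)\in\mathfrak{X}(\fg^{*})_\lambda=di_\lambda(T_\lambda(G\cdot\lambda))$, while Part (1) of that theorem shows $di_\lambda$ is injective; hence $(di_\lambda)^{-1}(\xi_f)_\lambda\in T_\lambda(G\cdot\lambda)$ is well defined. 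Substituting $Y=(di_\lambda)^{-1}(\xi_f)_\lambda$ into (\ref{eq:consistent}) and using that $[\anchorfglambda]^{-1}(di_\lambda Y)=[\anchorfglambda]^{-1}((\xi_f)_\lambda)$ is exactly the class of $df_\lambda$ modulo $\mbox{Ker }\anchorfglambda$, the right-hand side of (\ref{eq:consistent}) collapses, for any $Z\in T_\lambda(G\cdot\lambda)$, to the natural pairing of $df_\lambda$ with $di_\lambda Z$, i.e.\ to $d(f\circ i)(\lambda)(Z)$. This yields the asserted formula $d(f\circ i)(\lambda)=\omega_{\infty,\lambda}((di_\lambda)^{-1}(\xi_f)_\lambda,\,\cdot\,)$, which then holds at every $\mu\in G\cdot\lambda$ by homogeneity.

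Given Part (1), Part (2) is nearly immediate. I would first note that $\{f\circ i,g\circ i\}_\infty$ is well defined, since both $(di_\mu)^{-1}(\xi_f)_\mu$ and $(di_\mu)^{-1}(\xi_g)_\mu$ exist by the same tangency and injectivity used above. Substituting $Z=(di_\mu)^{-1}(\xi_g)_\mu$ into the formula of Part (1) gives
$$\{f\circ i,g\circ i\}_\infty(\mu)=d(f\circ i)(\mu)\big((di_\mu)^{-1}(\xi_g)_\mu\big)=df_\mu\big((\xi_g)_\mu\big),$$
which, by the definition of the Hamiltonian vector field $\xi_g$ and the Poisson bivector on $\fg^{*}$, equals $\{f,g\}(\mu)$. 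Thus $\{f\circ i,g\circ i\}_\infty=\{f,g\}\circ i$ on $i^{-1}(U)$, so that $i^{*}$ intertwines the two brackets. The bilinearity, antisymmetry, Leibniz rule, and Jacobi identity for $\{\cdot,\cdot\}_\infty$ are then inherited verbatim from the Lie-Poisson bracket on $\fg^{*}$, which simultaneously endows $i^{*}\mathcal{O}_{\fg^{*}}(U)$ with a Poisson algebra structure and exhibits $i^{*}$ as a homomorphism of Poisson algebras.

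The main point to watch is the well-definedness of $(di_\mu)^{-1}(\xi_f)_\mu$ in the ind/pro setting: this rests entirely on Theorem \ref{thm:symplecticleaves}, which guarantees both that $(\xi_f)_\mu$ lands in the image $di_\mu(T_\mu(G\cdot\lambda))$ and that $di_\mu$ is injective, so that none of the functional-analytic completeness hypotheses of the Banach setting of \cite{OR} are required. The only other delicate point is the bookkeeping of the sign conventions relating $\anchorfglambda$, $\xi_f$, and the Kostant-Kirillov form (\ref{eq:formula}); once these are fixed consistently as in Example \ref{ex:Lie-Poisson} and Proposition \ref{p:symplecticform}, the two identifications $d(f\circ i)(\mu)=\omega_{\infty,\mu}((di_\mu)^{-1}(\xi_f)_\mu,\cdot)$ and $\{f\circ i,g\circ i\}_\infty=\{f,g\}\circ i$ fall out directly.
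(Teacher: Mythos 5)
Your proposal is correct and is essentially the paper's own argument: the paper proves this proposition simply by asserting that the proof of Proposition 7.2 of \cite{OR} carries over, and your write-up supplies precisely that transfer, drawing the two needed inputs (that $(\xi_f)_\mu$ lies in $di_\mu(T_\mu(G\cdot\lambda))$ with $di_\mu$ injective, and the consistency identity (\ref{eq:consistent})) from Theorem \ref{thm:symplecticleaves} exactly as intended, including the correct observation that stalkwise $df_\mu\in T^*_\mu(\fg^*)=\dirlim T^*_{\mu_n}(\fg_n^*)$ so the argument localizes from global functions to sections over $U$. The one caveat is the sign bookkeeping you yourself flag: under the paper's stated conventions ($\wpi_{\fg^*}$ via $-\ad^*$ in (\ref{eq:littleanchor}), $di$ via $+\ad^*$ in (\ref{eq:diff}), and (\ref{eq:formula})), the ``collapse'' in your Part (1) actually yields $-df_\mu(di_\mu Z)$ rather than $+df_\mu(di_\mu Z)$ --- a sign ambiguity already present in the paper itself (its claim in the proof of Theorem \ref{thm:symplecticleaves} that $[\anchorfglambda]^{-1}\circ di_\lambda$ is the identity is off by the same sign, harmlessly there since both arguments of $\bivecfglambda$ absorb it) and harmless in your Part (2), where the two minus signs cancel and $\{f\circ i,g\circ i\}_\infty=\{f,g\}\circ i$ holds on the nose.
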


We end this section with a discussion of the Lagrangian calculus of a coadjoint orbit $G\cdot \lambda$ that will be useful in Section \ref{ss:Lagrangianfoliation}.

\begin{prop}\label{p:Lag}

Let $\mathcal{L}\subseteq G\cdot\lambda$ be an ind-subvariety, so that $\mathcal{L}=\bigcup_{n=1}^{\infty} \mathcal{L}_n$ with $\mathcal{L}_{n}:= \mathcal{L}\cap (G_{n}\cdot\lambda)$ a locally closed subvariety of $G_{n}\cdot\lambda$.  Let $p_{n}: G_{n}\cdot\lambda\to G_{n}\cdot \lambda_{n}$ be the projection.  
Define $\tilde{\mathcal{L}_{n}}:=p_{n}(\mathcal{L}_{n})$  and suppose that $\tilde{\mathcal{L}_{n}}$ satisfies the following conditions: 
\begin{enumerate}
\item $\tilde{\mathcal{L}_{n}}\subseteq G_{n}\cdot\lambda_{n}$ is a subvariety. 
\item $dp_{n} T(\mathcal{L}_{n})=T(\tilde{\mathcal{L}_{n}})$.
\item$ dp_{n}^{-1}(T(\tilde{\mathcal{L}_{n}}))\subseteq T(\mathcal{L}_{n})$.  
\item $\tilde{\mathcal{L}_{n}}\subseteq (G_{n}\cdot\lambda_{n},\omega_{n})$ is Lagrangian.
\end{enumerate}
Then $\mathcal{L}\subseteq (G\cdot\lambda,\omega_{\infty})$ is a Lagrangian ind-subvariety. 
\end{prop}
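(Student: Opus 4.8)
The plan is to observe that $\mathcal{L}=\bigcup_n\mathcal{L}_n$ is already an ind-subvariety of $G\cdot\lambda$ by hypothesis, so the only thing left to verify is the Lagrangian condition at every point: $T_\mu(\mathcal{L})=T_\mu(\mathcal{L})^{\perp}$ for each $\mu\in\mathcal{L}$, where the symplectic orthogonal $(\,\cdot\,)^{\perp}$ is computed inside $T_\mu(G\cdot\lambda)$ with respect to $\omega_{\infty}$. I would prove the two inclusions (isotropy and coisotropy) separately, in each case descending to a single finite level $n$. Two facts make this possible: first, $T_\mu(\mathcal{L})=\dirlim_n T_\mu(\mathcal{L}_n)$, so any finite set of tangent vectors may be assumed to lie in one $T_\mu(\mathcal{L}_n)$; and second, for $Y,Z\in T_\mu(G_n\cdot\lambda)$ one has the pairing identity
$$\omega_{\infty,\mu}(Y,Z)=\omega_n\big((dp_n)_\mu Y,(dp_n)_\mu Z\big),$$
which follows from the compatibility (\ref{eq:match}) together with Lemma \ref{wedgelimlemma} (equivalently, from the explicit formula (\ref{eq:formula})). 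Every computation thus reduces to the finite-dimensional orbit $G_n\cdot\lambda_n$.

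For isotropy, given $Y,Z\in T_\mu(\mathcal{L})$ I would choose $n$ large enough that both lie in $T_\mu(\mathcal{L}_n)$. By condition (2), $(dp_n)_\mu Y$ and $(dp_n)_\mu Z$ lie in $T(\tilde{\mathcal{L}_{n}})$, and since $\tilde{\mathcal{L}_{n}}$ is Lagrangian in $(G_n\cdot\lambda_n,\omega_n)$ by condition (4), the pairing identity gives $\omega_{\infty,\mu}(Y,Z)=0$. Hence $T_\mu(\mathcal{L})\subseteq T_\mu(\mathcal{L})^{\perp}$.

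For coisotropy, let $Y\in T_\mu(\mathcal{L})^{\perp}$. Since $T_\mu(G\cdot\lambda)=\dirlim_n T_\mu(G_n\cdot\lambda)$, the vector $Y$ lies in $T_\mu(G_n\cdot\lambda)$ for some $n$, and (enlarging $n$) we may also assume $\mu\in\mathcal{L}_n$. For every $Z\in T_\mu(\mathcal{L}_n)\subseteq T_\mu(\mathcal{L})$ the pairing identity yields $\omega_n\big((dp_n)_\mu Y,(dp_n)_\mu Z\big)=0$; as $Z$ ranges over $T_\mu(\mathcal{L}_n)$, condition (2) makes $(dp_n)_\mu Z$ range over all of $T(\tilde{\mathcal{L}_{n}})$, so $(dp_n)_\mu Y$ is $\omega_n$-orthogonal to $T(\tilde{\mathcal{L}_{n}})$. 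Because $\tilde{\mathcal{L}_{n}}$ is Lagrangian (condition (4)), this forces $(dp_n)_\mu Y\in T(\tilde{\mathcal{L}_{n}})$, and then condition (3) gives $Y\in (dp_n)_\mu^{-1}(T(\tilde{\mathcal{L}_{n}}))\subseteq T_\mu(\mathcal{L}_n)\subseteq T_\mu(\mathcal{L})$. Thus $T_\mu(\mathcal{L})^{\perp}\subseteq T_\mu(\mathcal{L})$, and together with isotropy this proves the statement.

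I expect the main obstacle to be conceptual rather than computational: correctly managing the interface between the direct-limit tangent space upstairs and the finite-dimensional symplectic geometry downstairs. The crucial point in the coisotropic direction is that a single vector $Y$ annihilating all of $T_\mu(\mathcal{L})$ can be tested at the one finite level where it lives, and that hypotheses (2) and (3) are precisely what convert the downstairs relation $(dp_n)_\mu Y\in T(\tilde{\mathcal{L}_{n}})^{\perp}=T(\tilde{\mathcal{L}_{n}})$ back into upstairs membership $Y\in T_\mu(\mathcal{L}_n)$. Indeed (2) and (3) together say that $dp_n$ identifies $T(\mathcal{L}_n)$ with the full preimage $(dp_n)^{-1}(T(\tilde{\mathcal{L}_{n}}))$, which is exactly what allows the orthogonal complement both to descend and to lift; condition (3) is what upgrades isotropy to the Lagrangian property. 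I would also confirm once that the level-$n$ pairing identity is independent of the chosen level, which is guaranteed by (\ref{eq:match}).
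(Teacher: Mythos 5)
Your proposal is correct and follows essentially the same route as the paper: you prove $T_\mu(\mathcal{L})=T_\mu(\mathcal{L})^{\perp}$ via separate isotropy and coisotropy inclusions, reduce every pairing to a single finite level through the identity $\omega_{\infty,\mu}(Y,Z)=\omega_{n}\bigl((dp_n)_\mu Y,(dp_n)_\mu Z\bigr)$, and deploy hypotheses (2), (3), (4) exactly as the paper does (with (3) doing the work in the coisotropic direction). The only cosmetic difference is bookkeeping: in the isotropy step you place both vectors at a common level $n$, whereas the paper pushes one vector up to the level $k$ of the other before applying $dp_k$; these are equivalent by the compatibility (\ref{eq:match}).
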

\noindent
\begin{proof}  Fix $\mu\in \mathcal{L}$ and let $\ell\geq 1$ be such that $\mu\in G_{\ell}\cdot \lambda$, but $\mu\notin G_{k}\cdot\lambda$ 
for any $k<\ell$.  Note that for any $n\geq \ell$, we have $G_{n}\cdot\lambda=G_{n}\cdot \mu$, so that $T_{\mu}(\mathcal{L})=\dirlim_{n\geq \ell} T_{\mu}(\mathcal{L}_{n})\subset \dirlim_{n\geq \ell} T_{\mu}(G_{n}\cdot\mu)$.  We show that $T_{\mu}(\mathcal{L})=T_{\mu}(\mathcal{L})^{\perp}$, where $T_{\mu}(\mathcal{L})^{\perp}$ denotes the annihilator of $T_{\mu}(\mathcal{L})$ in $T_{\mu} (G\cdot \mu)$ with respect to the weak symplectic form $\omega_{\infty}$.  

We first show that $\mathcal{L}$ is coisotropic, i.e., that $T_{\mu}(\mathcal{L})^{\perp}\subseteq T_{\mu}(\mathcal{L})$.  Let $\xi\in T_{\mu}(\calL)^{\perp}$, with $\xi=\xi_{n}+\fg_{n}^{\mu}$ for some $\xi_{n}\in\fg_{n}$ and $n\geq 1$.  We consider $(\omega_{\infty})_{\mu}(\xi_{n}+\fg_{n}^{\mu}, L_{k}+\fg_{k}^{\mu})$, where $k\geq \ell$.  Suppose $n\leq\ell$.  Then by definition of $\omega_{\infty}$, we have 
$$
(\omega_{\infty})_{\mu}(\xi_{n}+\fg_{n}^{\mu}, T_{\mu}(\mathcal{L}))=(\omega_{\infty})_{\mu}(di_{n\ell}(\xi)+\fg_{\ell}^{\mu},T_{\mu}(\mathcal{L})), 
$$
where $di_{n\ell}$ denotes the differential of the inclusion $i_{n\ell}: G_{n}\cdot\mu\to G_{\ell}\cdot \mu$. We can thus assume, without loss of generality, that $n\geq\ell$.


  Note that $(\omega_\infty)_\mu(\xi_n+\fg_n^\mu,T_\mu(\calL))=0$, so $(\omega_\infty)_\mu(\xi_n+\fg_n^\mu,T_\mu(\calL_k))=0$ for all $k\geq \ell$.  In particular,
\begin{align*}
(\omega_\infty)_\mu(\xi_n+\fg_n^\mu,L_n+\fg_n^\mu)&=(\omega_n)_{\mu_n}(\xi_n+\fg_n^{\mu_n},dp_{n}(L_n+\fg_n^\mu))\\&=0,\end{align*}
for all $L_n+\fg_n^\mu\in T_\mu(\calL_n)$.  By (2), $dp_{n}T_\mu(\calL_n)=T_{\mu_n}(\tilde{\calL}_n)$, so $\xi_n+\fg_n^{\mu_n}\in T_{\mu_n}(\tilde{\calL}_n)^{\perp}$.  By (4), 
$\tilde{\calL}_{n}$ is Lagrangian in $G_{n}\cdot\mu_{n}=G_{n}\cdot\lambda_{n}$, whence $T_{\mu_{n}}(\tilde{\calL_{n}})^{\perp}=T_{\mu_{n}}(\tilde{\calL_{n}}).$ 
 Thus, $$\xi=\xi_n+\fg_n^\mu\in dp_{n}^{-1}(T_{\mu_n}(\tilde{\calL}_n))\subseteq T_\mu(\calL_n)\subseteq T_\mu(\calL)$$ by (3).

We now show that $\calL$ is isotropic.  Suppose that $\xi=\xi_n+\fg_n^\mu\in T_\mu(\calL_n)$ with $n\geq \ell$.  Consider $(\omega_\infty)_\mu(\xi,L_k+\fg_k^\mu)$
for $L_k+\fg_k^\mu\in T_\mu(\calL_k)$, with $k\geq\ell$.  As before, if $k\leq n$, we can identify $L_k+\fg_k^\mu$ with its pushforward in $T_\mu(\calL_n)$.  This lets us reduce to the case where $k\geq n$.  Identifying $\xi$ with its image $d\iota_{nk}(\xi)=\xi_n+\fg_k^\mu\in T_\mu(\calL_k)$, we have 
\begin{align*}
(\omega_\infty)_\mu(\xi,L_k+\fg_k^\mu)&=(\omega_k)_{\mu_k}(dp_{k}(\xi_n+\fg_k^\mu),dp_{k}(L_k+\fg_k^\mu))\\
&=(\omega_k)_{\mu_k}(\xi_n+\fg_k^{\mu_k},L_k+\fg_k^{\mu_k}).
\end{align*}
But $dp_{k}T_\mu(\calL_k)=T_{\mu_k}(\tilde{\calL}_k)$ by (2), and $\tilde{\calL}_k$ is Lagrangian in $G_k\cdot\lambda_k=G_k\cdot\mu_k$.  Thus, $T_{\mu_k}(\tilde{\calL}_k)\subseteq T_{\mu_k}(\tilde{\calL}_k)^\perp$, so $(\omega_\infty)_\mu(\xi,L_k+\fg_k^\mu)=0$.  Since $k\geq\ell$ is arbitrary, we have $T_\mu(\calL)\subseteq T_\mu(\calL)^\perp$, and $\calL$ is Lagrangian.\qed
\end{proof}

\bigskip

\section{Gelfand-Zeitlin Integrable System on $M(\infty)$}
\subsection{The group $A(\infty)$}
In this section, we study the analogue of the Gelfand-Zeitlin\footnote{Alternate spellings of
Zeitlin include Cetlin, Tsetlin, Tzetlin, and Zetlin.  In this paper,
we follow the convention from the earlier work of the first author.} collection 
of functions for the Poisson provariety $M(\infty)$ defined in Example \ref{MPoisson}.  We show that 
the corresponding Lie algebra of Hamiltonian vector fields integrates 
to the action of a direct limit group $A(\infty)$ on $M(\infty)$ whose
generic orbits form Lagrangian ind-subvarieties of the corresponding adjoint orbit.  
We begin by recalling some facts about the Gelfand-Zeitlin integrable system 
on $\fg_{n}=\fgl(n,\C)$ constructed by Kostant and Wallach in \cite{KW1}.  


We denote by $\C[\fg_{n}]$ the polynomial functions on $\fg_{n}$.  For $i=1,\dots, n$ and $j=1,\dots, i$, we let $f_{ij}\in\C[\fg_{n}]$ be the polynomial $f_{ij}(X)=tr(X_{i}^{j})$, where $X_i$ is the $i\times i$ submatrix in the upper left-hand corner of $X\in\fg_{n}$, and $tr(\cdot)$ denotes the trace function on $\fg_{n}$.  If $\C[\fg_{n}]^{G_{n}}$ denotes the $\mbox{Ad}(G_{n})$-invariant polynomials on $\fg_{n}$, then $\C[\fg_{n}]^{G_{n}}$ is the polynomial ring $\C[f_{n1},\dots, f_{nn}]$.  
  Consider the Hamiltonian vector field  $\xi_{\fnij}$ on $\fg_{n}$. 
  For $X\in\fg_{n}$, $(d\fnij)_{X}\in T^{*}_{X}(\fg_{n})=\fg_{n}^{*}$.  We can use the trace form 
  $\ll X,Z\gg =tr(XZ)$ on $\fg_{n}$ to identify the differential 
  $(d\fnij)_{X}$ with an element $\nabla\fnij(X)\in \fg_{n}$.  
  The element $\nabla\fnij(X)$ is determined by its pairing against $Z\in\fg_{n}$ 
  by the formula 
  $$
  \ll \nabla\fnij(X),Z\gg =\frac{d}{dt}|_{t=0} \fnij(X+tZ)=(d\fnij)_{X}(Z).
  $$
  We compute 
  \begin{equation}\label{eq:fdifferential}
 \nabla\fnij(X)=j X_{i}^{j-1}\in\fg_{i}\hookrightarrow\fg_{n},
 \end{equation}
 where $\fg_{i}$ is embedded in the top lefthand corner of $\fg_{n}$ (see Example \ref{MPoisson}).  It follows that 
  \begin{equation}\label{eq:fnijHamiltonian}
  (\xi_{\fnij})_{X}=-[jX_{i}^{j-1}, X]. 
  \end{equation}
  (cf Equation \ref{eq:littleanchor}).  Note that if $i=n$, then $\xi_{f_{nj}}=0$ for all $j=1,\dots, n$, since $f_{nj}\in\C[\fg_{n}]^{G_{n}}$ is a Casimir function for the Lie-Poisson 
  structure on $\fg_{n}$.  
  The Gelfand-Zeitlin collection of functions on $\fg_{n}$ is 
  $J_{GZ}:=\{\fnij:\, 1\leq j\leq i\leq n\}.$  The functions $J_{GZ}$ are Poisson 
  commutative and their restriction to a regular adjoint orbit of $G_{n}$ on $\fg_{n}$
  forms an integrable system \cite{KW1}.

   We let $$\fa(n):=\mbox{span}\{ \xi_{f_{ij}}: 1\leq j\leq i\leq n-1\}$$
be the corresponding Lie algebra of Gelfand-Zeitlin vector fields on $\fg_n$.  Then $\mathfrak{a}(n)$ is an abelian Lie algebra of dimension
$\dn$.  Moreover, the Lie algebra $\fa(n)$ integrates to an analytic action of $A(n):=\C^{\dn}$ on $\fg_{n}$ (see \cite[Section 3]{KW1}). This action can be 
described as follows.  
 We take $\underline{t}=(\underline{t}_{1},\dots, \underline{t}_{n-1})\in\C^{1}\times\dots\times \C^{n-1}=\C^{{n\choose 2}}$ as coordinates on $A(n)$, where $\underline{t}_{i}=(t_{i1},\dots, t_{ii})\in\C^i$ for $1\leq i\leq n-1$.  In these coordinates, the action of $A(n)$ on $\fg_n$ is given by
\begin{equation}\label{eq:action}
a\cdot X=\Ad(\exp(t_{1,1}))\cdot\ldots\cdot\Ad(\exp(jt_{i,j}X_{i}^{j-1}))\cdot\ldots\cdot\Ad(\exp((n-1)t_{n-1,n-1} X_{n-1}^{n-2}))\cdot X,
\end{equation}
 for all $1\leq j\leq i\leq n-1$ and $X\in\fg_n$. 
 Since the Gelfand-Zeitlin functions Poisson commute, $A(n)\cdot X\subset G_{n}\cdot X$ 
 is an isotropic submanifold.  
 For each $X\in\fg_{n}$, it follows from Equation \ref{eq:fnijHamiltonian} that 
\begin{equation}\label{eq:Antangent}
T_{X}(A(n)\cdot X)=\fa(n)_{X}=\mbox{span}\{[X_{i}^{j-1}, X]:\, 1\leq j\leq i\leq n-1\}.  
\end{equation}
 %
 
 We now define an infinite dimensional Gelfand-Zeitlin system $J_\infty$ for the provariety $M(\infty)$ by pulling back the Gelfand-Zeitlin functions $f_{ij}$ to $M(\infty)$.  We recall from Example \ref{MPoisson} that $M(\infty)$ can be identified with the space of sequences:  
 $$
 M(\infty)=\{X=(X(1), X(2), \dots, X(n), X(n+1), \dots ,) :\, X(n)\in\fg_{n}\mbox { and } X(n+1)_{n}=X(n)\}.
 $$  
 We have a natural morphism of locally ringed spaces $p_{n}: M(\infty)\to \fg_{n}$, given by $p_{n}(X)=X(n)$.  Also, the morphism $p_{n}$ is Poisson with respect to the Poisson provariety structure on $M(\infty)$ given in Example \ref{ex:Lie-Poisson} and the Lie-Poisson structure on $\fg_{n}$.  Let  
 $$
 J_{\infty}:=\{p_{n}^{*} f_{nj}:\, n\in\mathbb{N}, \, j=1,\dots, n\}.
 $$
 
 \begin{prop}\label{p:maxcom} 
The set $J_{\infty}$ of Gelfand-Zeitlin functions on $M(\infty)$ is Poisson commutative.  
\end{prop}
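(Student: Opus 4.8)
The plan is to reduce the claim to the Poisson commutativity of the finite-dimensional Gelfand-Zeitlin family $J_{GZ}$ on a single $\fg_n$ — the Kostant-Wallach result recalled above (see \cite{KW1}) — combined with the fact, established just before the statement, that each projection $p_n\colon M(\infty)\to\fg_n$ is a Poisson morphism. Accordingly, I would fix two arbitrary elements $p_m^*f_{mj}$ and $p_n^*f_{nk}$ of $J_\infty$ and assume without loss of generality that $m\le n$.

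The key step is to rewrite the lower function $p_m^*f_{mj}$ as a pullback along the \emph{single} projection $p_n$. Using the compatibility condition $X(n)_m=X(m)$ built into $M(\infty)$, for $X=(X(1),X(2),\dots)$ we have
\[
(p_m^*f_{mj})(X)=tr(X(m)^j)=tr\big((X(n)_m)^j\big)=(p_n^*f_{mj})(X),
\]
where on the right $f_{mj}$ is reinterpreted as the Gelfand-Zeitlin function $Y\mapsto tr(Y_m^j)$ on $\fg_n$. Thus $p_m^*f_{mj}=p_n^*f_{mj}$ as global regular functions on $M(\infty)$, with $f_{mj}$ now regarded as a member of $J_{GZ}$ on $\fg_n$.

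Once both functions are expressed as $p_n$-pullbacks, the conclusion is immediate. Since $1\le j\le m\le n$ and $1\le k\le n$, both $f_{mj}$ and $f_{nk}$ lie in the Gelfand-Zeitlin collection $J_{GZ}=\{f_{ip}:1\le p\le i\le n\}$ on $\fg_n$, and hence $\{f_{mj},f_{nk}\}_{\fg_n}=0$ by Kostant-Wallach. Because $p_n$ is Poisson, its pullback $p_n^*$ is a homomorphism of Poisson algebras, so
\[
\{p_m^*f_{mj},\,p_n^*f_{nk}\}=\{p_n^*f_{mj},\,p_n^*f_{nk}\}=p_n^*\{f_{mj},f_{nk}\}_{\fg_n}=p_n^*(0)=0.
\]

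I do not expect a genuine obstacle here: the only point requiring care is the bookkeeping in the identity $p_m^*f_{mj}=p_n^*f_{mj}$, which is precisely where the nested upper-left-corner structure of $M(\infty)$ is used to fold the two different projection levels into one. After that folding, everything follows from the single finite-dimensional Kostant-Wallach theorem and the Poisson property of $p_n$, and since the two chosen functions were arbitrary, $J_\infty$ is Poisson commutative.
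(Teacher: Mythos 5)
Your proof is correct, and its mechanics coincide with the paper's: your folding identity $p_m^*f_{mj}=p_n^*f_{mj}$ (reinterpreting $f_{mj}$ on $\fg_n$ as $Y\mapsto tr(Y_m^j)$, i.e.\ as $p_{nm}^*f_{mj}$) and your use of $p_n^*$ as a homomorphism of Poisson algebras are exactly the paper's two steps. The one genuine difference is the final vanishing: you invoke the full Kostant--Wallach theorem that $J_{GZ}=\{f_{ip}: 1\le p\le i\le n\}$ is Poisson commutative on $\fg_n$, whereas the paper needs only the much weaker observation that one of the two folded functions, namely $f_{nk}\in\C[\fg_n]^{G_n}$, is a \emph{Casimir} of the Lie--Poisson structure on $\fg_n$, so that $\{f_{nk},\,p_{nm}^*f_{mj}\}_n=0$ for trivial reasons. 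This shortcut is available because every element of $J_\infty$ is by definition the pullback of a top-level invariant $f_{nj}$ (never a lower-corner function $f_{ij}$ with $i<n$), so after folding to the larger index one of the two factors is always a Casimir at that level. Your appeal to \cite{KW1} buys brevity of citation; the paper's version buys logical economy, making the proposition self-contained modulo the elementary fact that $\Ad$-invariant polynomials are Casimirs for the Lie--Poisson bracket --- a fact which is, in essence, also the engine behind the finite-dimensional Gelfand--Zeitlin commutativity you cite.
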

\noindent
\proof
For the purposes of this proof, we will denote the Poisson 
bracket on $M(\infty)$ by $\{\cdot,\cdot\}_{\infty}$ and the Poisson bracket on $\fg_{n}$ by $\{\cdot,\cdot\}_{n}$.  Consider $p_{n}^{*}f_{nj}\in J_{\infty}$ for $n\in\mathbb{N}$.  Note that for any $m\leq n$, and any $1\leq k\leq m$, we have $\{p_{n}^{*}f_{nj}, p_{m}^{*} f_{mk}\}_{\infty}=0$.  Indeed, $p_{m}^{*}f_{mk}=p_{n}^{*}p_{nm}^{*} f_{mk}$ so that $\{p_{n}^{*}f_{nj}, p_{m}^{*} f_{mk}\}_{\infty}=\{p_{n}^{*}f_{nj}, p_{n}^{*}p_{nm}^{*}f_{mk}\}_{\infty}=p_{n}^{*}\{f_{nj}, p_{nm}^{*}f_{mk}\}_n$.  But $\{f_{nj}, p_{nm}^{*}f_{mk}\}_n=0$, since elements of $\C[\fg_{n}]^{G_n}$ are Casimir functions for the Lie-Poisson structure on $\fg_{n}$.  A completely analogous argument shows that if $m>n$, $\{p^{*}_{m} f_{mk}, p^{*}_{n} f_{nj}\}_{\infty}=0$. 
\qed

 \begin{rem}\label{r:maxcom}
 In fact, it can be shown that the functions $J_{\infty}$ generate a maximal Poisson commutative subalgebra of $\C[M(\infty)]:=\dirlim \C[\fg_{n}]$.
 \end{rem}
 
We consider the abelian Lie algebra of Hamiltonian vector fields on $M(\infty)$,
\begin{equation}\label{eq:ainf}
\fa(\infty):= \mbox{span}\{\xi_{f} :\, f\in J_\infty\}.
\end{equation}
Let $f=p_{n}^{*}f_{n,j}$, we compute $\xi_{f}$.  
It follows from definitions that $(\xi_{f})_{X}=\widetilde{\pi}_{\infty,X}(dp_{n}^{*}f_{n,j})$, 
where $\widetilde{\pi}_{\infty,X}$ is the anchor map for the Poisson structure $\pi_{\infty}$ 
on $M(\infty)$ evaluated at $X=(X_{1},\dots, X_{n},\dots, X_{k},\dots, )\in M(\infty)$. Equation \ref{eq:anchor} implies that
\begin{equation}\label{eq:bigHamiltonian}
(\xi_{f})_{X}=(0,\dots, 0, \underbrace{-[jX_{n}^{j-1}, X_{n+1}]}_{n+1}, \dots, -[jX_{n}^{j-1}, X_{k}], \dots ).
\end{equation}
We now construct an action of a direct limit group $A(\infty):=\C^{\infty}$ on $M(\infty)$ whose generic orbits on $M(\infty)$ are tangent to the 
Lie algebra of Hamiltonian vector fields $\fa(\infty)$.  

For each $n\in\mathbb{N}$, we have a natural homomorphism
$$
\phi_{n,n+1}: A(n)\hookrightarrow A(n+1)\mbox{ given by } \phi_{n,n+1}(\underline{t}_{1},\dots, \underline{t}_{n-1})=(\underline{t}_{1},\dots, \underline{t}_{n-1}, (0,\dots, 0)). 
$$   
The maps $\phi_{n,n+1}$ are clearly closed embeddings, and thus the direct limit $$A(\infty):=\dirlim A(n)=\bigcup_{n\geq 1} A(n)$$ naturally has the structure 
of a direct limit group.  For each $n\geq 1$,  
it follows from Equation \ref{eq:action} that 
$A(n)$ acts on $M(\infty)$ via: 
\begin{equation}\label{eq:actiononMinfty}
a\cdot X=(X_{1}, t_{11}\cdot X_{2}, \dots, (t_{11}, \dots, t_{n-2,n-2})\cdot X_{n-1},
 (t_{11},\dots, t_{n-1,n-1})\cdot X_{n},\dots, (t_{11},\dots, t_{n-1,n-1})\cdot X_{k},\dots ).
\end{equation}
Observe that the diagram 
\begin{equation*}\label{eq:Andiag}
\xymatrix{
A(n)\times M(\infty)\ar[d]_{\phi_{n,n+1}\times Id}\ar[r]&M(\infty)\ar[d]^{Id}\\
A(n+1)\times M(\infty)  \ar[r]  &M(\infty).
}
\end{equation*}
is commutative, where the horizontal maps are given by (\ref{eq:actiononMinfty}). 
We therefore obtain an action of $A(\infty)$ on $M(\infty)$.  Note that $A(\infty)\cdot X\subseteq i(GL(\infty)\cdot X)\subseteq M(\infty)$.   
However, this is not an algebraic action of $A(\infty)=\C^{\infty}$ on $M(\infty)$.

\subsection{Strongly Regular Orbits}\label{ss:Lagrangianfoliation}

We now show that the generic $A(\infty)$-orbits on $M(\infty)$ form Lagrangian subvarieties of the 
corresponding $GL(\infty)$-adjoint orbit with respect to the symplectic form $\omega_{\infty}$ constructed in Section \ref{s:leaves}.  We first recall
the conditions for an $A(n)$-orbit on $\fgl(n,\C)$ to be generic. 
An element $X\in\fg_{n}$ is said to be $\emph{strongly regular}$ if the differentials 
 $\{(df_{ij})_{X}: 1\leq j\leq i\leq n\}$ are linearly independent (see \cite[Theorem 2.7]{KW1}).  
 We denote the set of strongly regular elements of $\fg_{n}$ by $(\fg_{n})_{sreg}$.  
 
 Strongly regular elements may be characterized as follows:
\begin{prop}\label{prop:sreg}(\cite{KW1}, Proposition 2.7 and Theorem 2.14)
The following statements are equivalent. 
\begin{enumerate}
\item $X\in\fg_{n}$ is strongly regular.

\item The tangent vectors $\{(\xifij)_{X};\, i=1,\dots, n-1,\, j=1,\dots, i\}$ are linearly independent.

\item The elements $X_{i}\in\fg_{i}$ are regular for all $i=1,\dots, n$ and $\fz_{\fg_{i}}(X_{i})\cap\fz_{\fg_{i+1}}(X_{i+1})=0$ for $i=1,\dots, n-1$, where $\fz_{\fg_{i}}(X_{i})$ denotes the centralizer of $X_{i}$ in $\fg_{i}$. 

\item The $A(n)$-orbit of $X$, $A(n)\cdot X$ is a Lagrangian subvariety of the adjoint orbit $G_{n}\cdot X$. In particular, 
$\dim A(n)\cdot X=\dn$. 

\end{enumerate}
\end{prop}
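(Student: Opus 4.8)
The plan is to identify the four conditions through the explicit gradient and Hamiltonian formulas recorded in \eqref{eq:fdifferential} and \eqref{eq:fnijHamiltonian}, and then establish the equivalences by reducing everything to linear-algebra statements about the nested centralizers $\fz_{\fg_{i}}(X_{i})$. Using the trace form to identify $(d\fnij)_{X}$ with $\nabla\fnij(X)=jX_{i}^{j-1}$, condition (1) becomes the assertion that $\{X_{i}^{j-1}:1\le j\le i\le n\}$ is linearly independent in $\fg_{n}$. The basic observation driving the whole argument is that, for fixed $i$, the vectors $\{X_{i}^{j-1}:1\le j\le i\}$ are independent if and only if $X_{i}$ is regular in $\fg_{i}$, in which case they form a basis of the commutative, $i$-dimensional centralizer $\fz_{\fg_{i}}(X_{i})$. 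I would prove (1)$\Leftrightarrow$(3), (1)$\Leftrightarrow$(2), and (2)$\Leftrightarrow$(4).

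For (1)$\Leftrightarrow$(3) I would first prove the key containment $\fz_{\fg_{i}}(X_{i})\cap\fg_{i-1}\subseteq \fz_{\fg_{i-1}}(X_{i-1})$ by a $2\times 2$ block computation: writing $X_{i}$ in block form with corner $X_{i-1}$, any centralizing element supported on $\fg_{i-1}$ is forced to commute with $X_{i-1}$. Since $\sum_{k\le i-1}\fz_{\fg_{k}}(X_{k})\subseteq\fg_{i-1}$, this containment collapses the global independence of $\bigcup_{i}\{X_{i}^{j-1}\}$ into exactly the two assertions of (3): regularity of each $X_{i}$ (independence within each level) and vanishing of the consecutive intersections $\fz_{\fg_{i}}(X_{i})\cap\fz_{\fg_{i+1}}(X_{i+1})$ (independence across adjacent levels), the cross-level dependencies beyond adjacent pairs being forced to zero by the containment. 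An induction on $i$ then gives that the total sum is direct of dimension $\dnone$, which is (1).

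For (1)$\Leftrightarrow$(2) I would use that \eqref{eq:fnijHamiltonian} exhibits $(\xifij)_{X}=\ad(X)\bigl(\nabla\fnij(X)\bigr)$, so the Hamiltonian vectors are the $\ad(X)$-images of the gradients, and $\ad(X)$ has kernel $\fz_{\fg_{n}}(X)$. When $X_{n}$ is regular the top-level gradients $\{X^{j-1}:1\le j\le n\}$ are precisely a basis of $\fz_{\fg_{n}}(X)$, so independence of the lower gradients modulo $\fz_{\fg_{n}}(X)$ coincides with independence of all gradients; this yields (1)$\Rightarrow$(2) immediately. For the converse I would note that a dependence among $\{X_{i}^{j-1}\}$ within a single level $i\le n-1$ survives application of $\ad(X)$, so (2) already forces $X_{i}$ regular for $i\le n-1$; regularity of $X_{n}$ is then extracted from the isotropy count below, after which the same centralizer computation run backwards recovers (1).

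Finally, (2)$\Leftrightarrow$(4) follows from \eqref{eq:Antangent}, which identifies $T_{X}(A(n)\cdot X)$ with the span of the $(\xifij)_{X}$, so that (2) is equivalent to $\dim A(n)\cdot X=\dn$. Because the functions $\fnij$ Poisson-commute \cite{KW1}, $A(n)\cdot X$ is isotropic in the symplectic leaf $G_{n}\cdot X$, whence its dimension is at most $\tfrac{1}{2}\dim G_{n}\cdot X\le\dn$, with the second inequality saturated only when $X$ is regular. Thus $\dim A(n)\cdot X=\dn$ forces $X$ regular and the orbit Lagrangian, and conversely. I expect the main obstacle to be the (1)$\Leftrightarrow$(3) step: the block computation is short, but organizing the induction so that only the \emph{consecutive} centralizer intersections enter, and verifying that these capture every potential cross-level dependence, is the genuine content behind \cite[Prop.~2.7, Thm.~2.14]{KW1}.
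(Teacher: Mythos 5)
The paper itself contains no proof of this proposition: it is imported from Kostant--Wallach, cited as \cite[Proposition 2.7 and Theorem 2.14]{KW1}, so there is no internal argument to compare against. Your blind reconstruction is correct, and it follows essentially the same line as the cited source. All three pillars check out: (i) the trace-form identification turns (1) into linear independence of $\{X_{i}^{j-1}:1\le j\le i\le n\}$, and for $X_{i}$ regular (i.e.\ nonderogatory) these powers do form a basis of the $i$-dimensional centralizer $\fz_{\fg_{i}}(X_{i})$; (ii) the containment $\fz_{\fg_{i}}(X_{i})\cap\fg_{i-1}\subseteq\fz_{\fg_{i-1}}(X_{i-1})$ is exactly right (the block computation also forces $Z'b=0$ and $cZ'=0$, but only the corner equation is needed), and it is indeed what makes the induction close up using only consecutive intersections: with $V_{k}=\sum_{i\le k}\fz_{\fg_{i}}(X_{i})\subseteq\fg_{k}$ one gets $V_{k}\cap\fz_{\fg_{k+1}}(X_{k+1})\subseteq\fz_{\fg_{k}}(X_{k})\cap\fz_{\fg_{k+1}}(X_{k+1})=0$, which is the point you correctly flagged as the genuine content; (iii) the isotropy count $\dim A(n)\cdot X\le\tfrac{1}{2}\dim G_{n}\cdot X\le\dn$ uses only the Poisson commutativity of the $f_{ij}$ (stated in the paper via \cite{KW1}), the tangent-space identity (\ref{eq:Antangent}), and the fact that $\dim\fz_{\fg_{n}}(X)\ge n$ with equality precisely for $X$ regular; saturation then forces both regularity of $X$ and half-dimensionality, hence the Lagrangian property, while conversely the ``in particular'' clause of (4) gives $\dim\fa(n)_{X}=\dn$ and hence independence of the $\dn$ spanning vectors $(\xifij)_{X}$, which is (2).

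Two small points where your write-up is vaguer than it needs to be, though nothing fails. First, the tail of (2)$\Rightarrow$(1) need not ``run the centralizer computation backwards'': once the isotropy count has forced $X=X_{n}$ regular, a putative dependence $\sum_{i,j}c_{ij}X_{i}^{j-1}=0$ dies in one stroke --- apply $\ad(X)$, the level-$n$ terms lie in $\ker\ad(X)=\fz_{\fg_{n}}(X)$, condition (2) kills all $c_{ij}$ with $i\le n-1$, and regularity of $X$ kills the remaining top-level coefficients; your intermediate observation that within-level dependencies at levels $i\le n-1$ contradict (2) is correct but not actually needed for this step. Second, in (2)$\Leftrightarrow$(4) you implicitly use that the $A(n)$-orbit has constant dimension equal to $\dim\fa(n)_{Y}$ at every point $Y$ of the orbit (so that the dimension statement at the single point $X$ yields the Lagrangian condition everywhere, and conversely); this is standard for orbits of a connected group integrating the given vector fields, but it is worth saying, since ``Lagrangian'' is a pointwise condition along the orbit.
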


 \begin{rem}\label{r:othersreg}
 For $i=1,\dots, n$, let $Z_{G_{i}}(X_{i})$ denote
 the centralizer in $G_{i}$ of $X_{i}$, so that 
 $Lie(Z_{G_{i}}(X_{i}))=\fz_{\fg_{i}}(X_{i})$. 
 For any $i=1, \dots, n-1$, it is easy to see that 
 $\fz_{\fg_{i}}(X_{i})\cap \fz_{\fg_{i+1}}(X_{i+1})=0$ if and only if
 $Z_{G_{i}}(X_{i})\cap Z_{G_{i+1}}(X_{i+1})=Id_{i+1}$, where $Id_{i+1}$ denotes the $(i+1)\times (i+1)$ identity matrix (see [\cite{CE}, Lemma 5.12]).  
\end{rem}
The notion of strong regularity generalizes naturally to the action of $A(\infty)$ on $M(\infty)$.

\begin{dfn}\label{d:sreg}
{\em We say that $X\in M(\infty)$ is \emph{strongly regular} if the differentials $\{ (df)_{X}: f\in J_{\infty}\}$ are linearly independent at $X$.  We denote the set of strongly regular elements in $M(\infty)$ by $M(\infty)_{sreg}$.}  
\end{dfn}

It is easy to see that $X=(X_{1},\dots, X_{n},\dots)\in M(\infty)_{sreg}$ if and only if $X_{n}\in (\fg_{n})_{sreg}$ for all $n$.  So that we have 
\begin{equation}\label{eq:sregset}
M(\infty)_{sreg}=\invlim (\fg_{n})_{sreg}.
\end{equation}

\begin{rem} 
One can show that $M(\infty)_{sreg}$ is a dense subset of $M(\infty)$ with empty interior. 
\end{rem}

 Using results of the first author, we can easily create examples of strongly regular elements.  
\begin{exam}
\em{
Let $M(\infty)_{\theta}\subseteq M(\infty)$ be the set
\begin{equation}\label{eq:theta}
M(\infty)_{\theta}=\{X\in M(\infty): \sigma(X_{i})\cap \sigma(X_{i+1})=\emptyset \mbox{ for each } i\in\mathbb{N}\},
\end{equation}
where $\sigma(X_{i})$ denotes the spectrum of $X_{i}$.  It follows from \cite[Theorem 5.5]{Col1} that $M(\infty)_{\theta}\subseteq M(\infty)_{sreg}$. } 
\end{exam}

We also have the following characterization of strongly regular elements of $M(\infty)$. 
\begin{prop}\label{p:moresreg}  Let $X=(X_{1},\dots, X_{n},\dots, X_{k}, \dots)\in M(\infty)$.  Then the following conditions are equivalent:
\begin{enumerate}
\item $X$ is strongly regular.
\item For all $i\in\mathbb{N}$, $X_{i}$ is regular and $Z_{G_{i}}(X_{i})\cap Z_{G_{i+1}}(X_{i+1})=Id_{i+1}$.  
\item The tangent vectors $\fa(\infty)_{X}:=\{(\xi_{f})_{X}:f\in J_{\infty}\}$ are linearly independent.
\end{enumerate}
\end{prop}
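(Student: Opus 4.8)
The plan is to prove the cycle of implications by reducing everything to the finite-dimensional characterization of strong regularity in Proposition \ref{prop:sreg}, together with the identification \ref{eq:sregset} of $M(\infty)_{sreg}$ as the inverse limit $\invlim (\fg_n)_{sreg}$. I would first dispatch (1) $\iff$ (2). By \ref{eq:sregset}, $X$ is strongly regular exactly when $X_n \in (\fg_n)_{sreg}$ for every $n$; applying the equivalence (1) $\iff$ (3) of Proposition \ref{prop:sreg} to each $\fg_n$ rewrites this as the requirements that each $X_i$ be regular and that $\fz_{\fg_i}(X_i) \cap \fz_{\fg_{i+1}}(X_{i+1}) = 0$ for all $i$ (the finitely many conditions appearing at level $n$ accumulate, over all $n$, to the full infinite list). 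Remark \ref{r:othersreg} then converts the centralizer condition into its group-level form $Z_{G_i}(X_i) \cap Z_{G_{i+1}}(X_{i+1}) = Id_{i+1}$, which is precisely (2).

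The implication (3) $\Rightarrow$ (1) I would argue by pure linearity. Each Hamiltonian field is $(\xi_f)_X = \widetilde{\pi}_{\infty,X}((df)_X)$, where $\widetilde{\pi}_{\infty,X}$ is the linear anchor map of \ref{eq:anchor}. Hence any linear relation among the differentials $\{(df)_X : f \in J_{\infty}\}$ pushes forward, with the same coefficients, to a relation among the fields $\{(\xi_f)_X\}$. Contrapositively, linear independence of the fields forces linear independence of the differentials, which is exactly strong regularity of $X$ by Definition \ref{d:sreg}. This direction is essentially immediate.

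The substantive direction is (1) $\Rightarrow$ (3), and the point to watch is that, unlike on a single $\fg_n$, the top-degree functions are no longer Casimirs on $M(\infty)$: by \ref{eq:bigHamiltonian} their Hamiltonian fields vanish in the first $n$ slots but survive further down the tail. The key observation is that the projection $(dp_{N+1})_X : T_X(M(\infty)) \to \fg_{N+1}$ sends $(\xi_{p_n^*f_{nj}})_X$ to the finite Gelfand--Zeitlin field $-[jX_n^{j-1}, X_{N+1}] = (\xi_{f_{nj}})_{X_{N+1}}$ for every $n \le N$ (compare \ref{eq:fnijHamiltonian} and \ref{eq:bigHamiltonian}). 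Thus, given a finite relation $\sum_{n \le N} c_{nj}\,(\xi_{p_n^*f_{nj}})_X = 0$, applying $(dp_{N+1})_X$ produces $\sum_{n \le N} c_{nj}\,(\xi_{f_{nj}})_{X_{N+1}} = 0$ in $\fg_{N+1}$. Since (1) gives $X_{N+1} \in (\fg_{N+1})_{sreg}$, the equivalence (1) $\iff$ (2) of Proposition \ref{prop:sreg} shows that these finite fields, indexed by $1 \le j \le n \le N = (N+1)-1$, are linearly independent, forcing every $c_{nj} = 0$. As $N$ is arbitrary, the whole family is independent, giving (3). The main obstacle is precisely this index bookkeeping: one must project to level $N+1$ rather than $N$ in order to detect the fields of the level-$N$ functions, which are invisible at their own level, and to land inside a space where the finite strong-regularity criterion applies. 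Assembling these three steps closes the loop (1) $\iff$ (2) and (1) $\iff$ (3).
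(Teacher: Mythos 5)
Your proposal is correct, and while your handling of (1) $\Leftrightarrow$ (2) (via Equation \ref{eq:sregset}, part (3) of Proposition \ref{prop:sreg}, and Remark \ref{r:othersreg}) and of (3) $\Rightarrow$ (1) (pure linearity of the anchor map; the paper dismisses this direction as trivial) matches the paper exactly, your proof of the substantive implication (1) $\Rightarrow$ (3) takes a genuinely different route. The paper argues structurally: strong regularity forces $\bigcap_{k\geq n}\fz_{\fg_{k}}(X_{k})=0$ for every $n$ by part (3) of Proposition \ref{prop:sreg}, so by Proposition \ref{p:injective} the anchor map $\widetilde{\pi}_{\infty,X}$ is \emph{injective} at $X$, and the linear independence of the differentials $\{(df)_{X}: f\in J_{\infty}\}$ (which is the definition of (1)) then transfers directly to their images $(\xi_{f})_{X}$. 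You instead take a putative finite relation among the fields, apply the projection $(dp_{N+1})_{X}$ to land in $\fg_{N+1}$, identify the components as the finite Gelfand--Zeitlin fields $(\xi_{f_{nj}})_{X_{N+1}}$ by comparing Equations \ref{eq:fnijHamiltonian} and \ref{eq:bigHamiltonian}, and then invoke the finite-dimensional equivalence (1) $\Leftrightarrow$ (2) of Proposition \ref{prop:sreg} at $X_{N+1}$; your bookkeeping is exactly right, and in particular your insistence on projecting to level $N+1$ rather than $N$ (since the level-$N$ functions are Casimirs on $\fg_{N}$ and their fields are invisible there) is the crucial point, with the index ranges $1\leq j\leq i\leq N=(N+1)-1$ matching part (2) of Proposition \ref{prop:sreg} precisely. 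The trade-off: the paper's argument yields the stronger intermediate fact that the anchor map is injective at every strongly regular point---of independent interest in view of the kernel description (\ref{eq:keranchor})---and requires no component computations, whereas your argument is more elementary and self-contained, bypassing Proposition \ref{p:injective} entirely and resting only on the Kostant--Wallach finite-dimensional criterion together with the explicit formula (\ref{eq:bigHamiltonian}).
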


\noindent
\begin{proof}
The equivalence of statements (1) and (2) follows from Equation \ref{eq:sregset} and part (3) of Proposition \ref{prop:sreg} along with Remark \ref{r:othersreg}.  We now see that (1) is equivalent to (3).
If $X\in M(\infty)_{sreg}$, then for any $n\in \mathbb{N}$, we have $\bigcap_{k\geq n} \fz_{\fg_{k}}(X_{k})=0$ by part (3) of Proposition \ref{prop:sreg}.  Thus, by Proposition \ref{p:injective}, we have that $(\anchori)_{X}$ is injective, which implies (3).   That (3) implies (1) is trivial.    \qed
\end{proof}

\bigskip

Proposition \ref{p:moresreg} and the existence of strongly regular elements immediately imply that the Lie algebra $\fa(\infty)$ is infinite dimensional.  

The main result of this section is the following theorem.
\begin{thm}\label{thm:Lagrangian}
Let $X\in M(\infty)_{sreg}$.  Let $i:GL(\infty)\cdot X\hookrightarrow M(\infty)$ be the inclusion morphism in (\ref{eq:biginclusion}).  Then 
\begin{enumerate}
\item The set $i^{-1}(A(\infty)\cdot X)$ naturally has the structure of an irreducible ind-subvariety of 
$GL(\infty)\cdot X$.  Thus, $A(\infty)\cdot X\subset M(\infty)$ is an immersed irreducible ind-subvariety.
\item The ind-subvariety $i^{-1}(A(\infty)\cdot X)\subseteq GL(\infty)\cdot X$ is Lagrangian with respect to the weak
symplectic form $\omega_{\infty}$ on $GL(\infty)\cdot X$. 
\item For any $Y\in A(\infty)\cdot X$, we have 
\begin{equation}\label{eq:Atangent}
T_Y(A(\infty)\cdot Y)=\fa(\infty)_{Y},
\end{equation}
so that the strongly regular $A(\infty)$-orbits in $M(\infty)$ are tangent to the Lie algebra $\fa(\infty)$ of Hamiltonian vector fields defined 
in Equation (\ref{eq:ainf}).
\end{enumerate}
\end{thm}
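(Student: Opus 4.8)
The plan is to reduce all three parts to the finite-dimensional Gelfand-Zeitlin theory of Kostant and Wallach (Proposition \ref{prop:sreg}) by analysing each subgroup $A(n)\subseteq A(\infty)$ through the projections $p_m:M(\infty)\to\fg_m$. The starting observation is that the flows generating $A(n)$ involve only the functions $f_{ij}$ with $i\leq n-1$, whose gradients $jX_i^{j-1}$ lie in $\fg_{n-1}$; hence by (\ref{eq:action})--(\ref{eq:actiononMinfty}) every $a\in A(n)$ acts by $a\cdot X=\Ad(g_a)X$ for some $g_a\in G_{n-1}$, so that $A(n)\cdot X\subseteq i(G_{n-1}\cdot X)$ and $i^{-1}(A(\infty)\cdot X)=\dirlim_n i^{-1}(A(n)\cdot X)$ with $i^{-1}(A(n)\cdot X)\subseteq G_{n-1}\cdot X$.

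For Part (1), I would show each $i^{-1}(A(n)\cdot X)$ is an irreducible locally closed subvariety of $G_{n-1}\cdot X$. The key point is that strong regularity makes the orbit map well-behaved: since $X_n\in(\fg_n)_{sreg}$, Proposition \ref{prop:sreg}(4) gives $\dim A(n)\cdot X_n=\dn=\dim A(n)$, so the $A(n)$-action on $X_n$ is free and $p_n$ identifies $i^{-1}(A(n)\cdot X)$ with the Lagrangian Gelfand-Zeitlin subvariety $A(n)\cdot X_n\subseteq G_n\cdot X_n$; in particular $i^{-1}(A(n)\cdot X)$ is locally closed and irreducible. Reindexing so that $\calL_n:=i^{-1}(A(n+1)\cdot X)\subseteq G_n\cdot X$, the inclusions $\calL_n\hookrightarrow\calL_{n+1}$ are locally closed embeddings, so $\calL:=i^{-1}(A(\infty)\cdot X)=\dirlim\calL_n$ is an irreducible ind-subvariety of $GL(\infty)\cdot X=\dirlim G_n\cdot X$; composing with the injective immersion $i$ of Theorem \ref{thm:symplecticleaves}(1) exhibits $A(\infty)\cdot X$ as an immersed irreducible ind-subvariety of $M(\infty)$.

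For Part (3), I would use the direct-limit description of tangent spaces, $T_Y(A(\infty)\cdot Y)=\dirlim_n T_Y(A(n)\cdot Y)$. Since the finite Gelfand-Zeitlin flows preserve $(\fg_m)_{sreg}$, the point $Y$ is again strongly regular by (\ref{eq:sregset}), so Proposition \ref{prop:sreg}(2) and Proposition \ref{p:moresreg} show the orbit map is an immersion with $T_Y(A(n)\cdot Y)=\fa(n)_Y=\mathrm{span}\{(\xi_f)_Y:f=p_i^*f_{ij},\,i\leq n-1\}$. Passing to the limit gives $T_Y(A(\infty)\cdot Y)=\mathrm{span}\{(\xi_f)_Y:f\in J_\infty\}=\fa(\infty)_Y$.

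The heart of the argument is Part (2), where I would verify the four hypotheses of Proposition \ref{p:Lag} for $\calL$, with $\calL_n=i^{-1}(A(n+1)\cdot X)$ and $\tilde{\calL}_n=p_n(\calL_n)$. By the computation above $\tilde{\calL}_n=A(n)\cdot Y_n$, which is a Lagrangian subvariety of $(G_n\cdot Y_n,\omega_n)$ by Proposition \ref{prop:sreg}(4); this gives hypotheses (1) and (4). Hypotheses (2) and (3) both follow from the single identity $T_Y(\calL_n)=dp_n^{-1}(T_{Y_n}(\tilde{\calL}_n))$ inside $T_Y(G_n\cdot X)=\fg_n/\fg_n^Y$. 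To prove it I note, using (\ref{eq:bigHamiltonian}), that the generators $(\xi_{p_i^*f_{ij}})_Y$ of $T_Y(\calL_n)$ with $i\leq n-1$ project under $dp_n$ onto a spanning set of $\fa(n)_{Y_n}=T_{Y_n}(\tilde{\calL}_n)$ (giving $\subseteq$, hence (2)), while those with $i=n$ lie in $\ker dp_n$; a dimension count then forces equality. The crucial input from strong regularity is the vanishing $\fg_n^Y=\bigcap_{k\geq n}\fz_{\fg_n}(Y_k)=0$ (from Proposition \ref{prop:sreg}(3) and Remark \ref{r:othersreg}), whence $\dim\ker(dp_n|_{T_Y(G_n\cdot X)})=\dim\fz_{\fg_n}(Y_n)=n$, so that $\dim dp_n^{-1}(T_{Y_n}(\tilde{\calL}_n))=\dn+n=\dnone=\dim T_Y(\calL_n)$ and the two spaces coincide, yielding (3). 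I expect this tangent-space identity---and in particular the vanishing $\fg_n^Y=0$ that makes the dimensions match---to be the main obstacle; everything else is bookkeeping with direct and inverse limits, modulo careful tracking of the reindexing between the $A(n)$- and $G_n$-filtrations.
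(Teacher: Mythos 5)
Your overall strategy is the paper's own (reduce to Kostant--Wallach, identify the filtered pieces of $i^{-1}(A(\infty)\cdot X)$, verify the hypotheses of Proposition \ref{p:Lag} using Proposition \ref{prop:sreg}(4), and read off Part (3) from the tangent computation), but there is a genuine gap at the central set-theoretic step. Proposition \ref{p:Lag} is stated for $\calL_n:=\calL\cap (G_n\cdot\lambda)$, and Part (1) likewise requires that the trace of $i^{-1}(A(\infty)\cdot X)$ on each $G_n\cdot X$ be a locally closed subvariety. You simply declare $\calL=\dirlim \calL_n$ with $\calL_n=i^{-1}(A(n+1)\cdot X)$, but a priori an element $g\in G_n\cong G_n\cdot X$ could satisfy $\Ad(g)X\in A(m)\cdot X$ only for some $m\gg n$, and nothing in your reduction rules this out; until it is ruled out, neither the ind-subvariety structure of Part (1) nor the hypothesis $\calL_n=\calL\cap(G_n\cdot X)$ of Proposition \ref{p:Lag} is available. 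This is exactly the paper's Equation (\ref{eq:indstructure}), $i_n^{-1}(A(\infty)\cdot X)=Z_{G_1}(X_1)\cdots Z_{G_n}(X_n)$, whose proof is the heart of the argument: write $a_m\cdot X=\Ad(z_1\cdots z_{m-1})X$ with $z_i\in Z_{G_i}(X_i)$, compare $(n+1)\times(n+1)$ corners, and invoke the \emph{group-level} form of strong regularity $Z_{G_k}(X_k)\cap Z_{G_{k+1}}(X_{k+1})=\{Id_{k+1}\}$ (Remark \ref{r:othersreg}), which via a block-matrix computation gives $G_k\cap Z_{G_{k+1}}(X_{k+1})=\{Id_{k+1}\}$ and forces $g=z_1\cdots z_n$. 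Your proposal invokes strong regularity only infinitesimally ($\fg_n^Y=0$), which suffices for your dimension count but not for this statement; note also that even your first assertion $i^{-1}(A(n)\cdot X)\subseteq G_{n-1}\cdot X$ quietly uses the trivial isotropy $G_n^X=\bigcap_{k\geq n}Z_{G_n}(X_k)=Id_n$ from Proposition \ref{p:moresreg}(2), which the paper establishes first and you never derive.

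Two smaller repairs. First, the $A(n)$-action on $X_n$ is not free: the parameter $t_{11}=2\pi i$ (all others zero) gives $\exp(t_{11}Id_1)=Id$ and acts trivially, so stabilizers contain nontrivial discrete subgroups; only local freeness holds, and fortunately freeness is never needed. Second, a set-theoretic bijection of $i^{-1}(A(n)\cdot X)$ with $A(n)\cdot X_n$ does not by itself endow the former with the structure of a locally closed subvariety of $G_n\cdot X$, nor does it give the smoothness and the value $\dim T_Y(\calL_n)=\dnone$ that your Part (2) count $\dn+n=\dnone$ silently assumes. The paper gets both at once by realizing $\calL_n$ as $\mathcal{Z}_n=p_n^{-1}(A(n)\cdot X_n)$, the preimage of the irreducible nonsingular Gelfand--Zeitlin variety $A(n)\cdot X_n$ (Theorem 3.12 of \cite{KW1}) under the smooth orbit map $p_n:G_n\to G_n\cdot X_n$ of relative dimension $n$; the Cartesian square then shows $\mathcal{Z}_n$ is nonsingular, irreducible, and locally closed of dimension $\dnone$. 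With these points supplied, the rest of your outline coincides with the paper's proof: the tangent identity you propose is the paper's (\ref{eq:Zntangent}), Part (2) follows from Proposition \ref{p:Lag} and Proposition \ref{prop:sreg}(4), and Part (3) from (\ref{eq:Antangent}), (\ref{eq:bigHamiltonian}), and (\ref{eq:diff}).
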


\noindent
\begin{proof}
Let $X\in M(\infty)_{sreg}$.  It follows from Part (2) of Proposition \ref{p:moresreg} that for each $n\in\mathbb{N}$, we have 
$\displaystyle{G_n^{X}=\bigcap_{k\geq n} Z_{G_{n}}(X_{k})=Id_{n},}$ where $Id_{n}$ denotes the $n\times n$ identity matrix.  
 Thus, 
$$
GL(\infty)\cdot X=\dirlim G_n/G_n^{X}=\dirlim G_n=GL(\infty).
$$
We claim
\begin{equation}\label{eq:indstructure}
i^{-1}(A(\infty)\cdot X)\cap G_n= i_{n}^{-1}(A(\infty)\cdot X)=Z_{G_{1}}(X_{1})\cdots Z_{G_{n}}(X_{n}),   
\end{equation}
where $i_{n}:G_n\to M(\infty)$ is the morphism in Equation \ref{eq:n-inclusion}, and $$Z_{G_{1}}(X_{1})\cdots Z_{G_{n}}(X_{n})=\{g\in G_n:\; g=z_{1}\cdots z_{n}\mbox{ with } z_{i}\in Z_{G_{i}}(X_{i})\}.$$
For ease of notation, we denote $Z_{G_{1}}(X_{1})\cdots Z_{G_{n}}(X_{n})$ by $\mathcal{Z}_{n}$. 
Indeed, suppose that $g_{n}\in i_{n}^{-1}(A(\infty)\cdot X)$.  Then 
$$
((\Ad(g_{n}) X_{n})_{1},\dots, (\Ad(g_{n}) X_{n})_{n-1}, \Ad(g_{n}) X_{n}, \dots, \Ad(g_{n}) X_{k},\dots, )=a_{m} \cdot X\mbox{ for some } a_{m}\in A(m),
$$
with $m\geq 1$. 
Let $a_{m}=(\underline{t}_{1},\dots, \underline{t}_{m-1})\in \C^{{m\choose 2}}$.  By Equation \ref{eq:action}, $A(m)$ acts on $\fg_{m}$ via 
\begin{equation*}
\begin{split}
&a_{m}\cdot X_{m}=\Ad(h_{m-1}) X_{m},\mbox{ where } h_{m-1}=z_{1}\cdots z_{m-1}\in G_{m-1}, \mbox{ with } \\z_{i}
&=\exp(t_{i1} Id_{i})\cdots \exp(it_{ii} X_{i}^{i-1})\in Z_{G_{i}}(X_{i})\subset G_{i}.
\end{split}
\end{equation*}
First, suppose that $m>n$, and consider $(\Ad(h_{m-1})X_{m})_{n+1}$.  
Since $z_{i}\in Z_{G_{i}}(X_{i})$ for $i=1,\dots, m-1$, it follows that 
$$
(\Ad(h_{m-1})X_{m})_{n+1}=\Ad(z_{1}\dots z_{n}) X_{n+1}=\Ad(g_{n})X_{n+1}. 
$$
Since $X\in M(\infty)_{sreg}$, we have $g_{n}=z_{1}\dots z_{n}$.  The case where $m<n$ is analogous.  
Thus, $i_{n}^{-1}(A(\infty)\cdot X)\subseteq \mathcal{Z}_{n}.$
   
 We now show that $\mathcal{Z}_{n}\subset i_{n}^{-1}(A(\infty)\cdot X)$.   Since $X\in M(\infty)_{sreg}$, $X_{i}$ is regular for all $i$ by Proposition \ref{p:moresreg}.  Whence, $Z_{G_{i}}(X_{i})$ is an abelian, connected algebraic group (Proposition 14, \cite{Ko}).  Therefore, the exponential map, $\exp:\fz_{\fg_{i}}(X_{i})\to Z_{G_{i}}(X_{i})$ 
is a surjective homomorphism of algebraic groups.  It is well-known that $\fz_{\fg_{i}}(X_{i})=\mbox{span} \{ Id_{i},\, \dots, \, X_{i}^{i-1}\}$ for $X_{i}\in\fg_{i}$ regular.  It follows that any $z\in Z_{G_{i}}(X_{i})$ can be written as $z=\exp(t_{i1} Id_{i})\dots \exp( t_{ii} X_{i}^{i-1})$, for some $t_{ij}\in \C$.  The inclusion $\mathcal{Z}_{n}\subseteq i_{n}^{-1}(A(\infty)\cdot X)$ now follows from Equations \ref{eq:action} and \ref{eq:actiononMinfty}.

Now we claim that $\mathcal{Z}_{n}$ is a smooth subvariety of $G_{n}$ of dimension $\dnone$.  It follows from our discussion above that $\Ad(Z_{G_{1}}(X_{1})\cdots Z_{G_{n-1}}(X_{n-1})) X_{n}\subseteq G_{n}\cdot X_{n}$ coincides with the $A(n)$-orbit of $X_{n}$. Moreover, Theorem 3.12, \cite{KW1} implies that $A(n)\cdot X_{n}$ is an irreducible, non-singular variety of dimension $\dn$.  If $p_{n}: G_{n}\to G_{n}\cdot X_{n}$ denotes the orbit map, then Proposition III 10.4, \cite{Ha} implies that $p_{n}$ is a smooth morphism of relative dimension $\dim Z_{G_{n}}(X_{n})=n$.  Since the diagram 
$$
\xymatrix{
\mathcal{Z}_{n}\ar[r]\ar[d]_{p_{n}}  & G_n \ar[d]^{p_{n}}\\
A(n)\cdot X_{n} \ar[r]&  G_n\cdot X_{n}}
$$
is Cartesian, it follows from Proposition III 10.1 (b), \cite{Ha} that $\mathcal{Z}_{n}$ is a non-singular variety of dimension $\dnone$.  Thus, 
$i^{-1}(A(\infty)\cdot X)=\bigcup_{n=1}^{\infty} \mathcal{Z}_{n}$ is an irreducible ind-subvariety of $GL(\infty)\cong GL(\infty)\cdot X$, and 
$A(\infty)\cdot X=i( \bigcup_{n=1}^{\infty} \mathcal{Z}_{n})$ is an irreducible, immersed ind-subvariety of $M(\infty)$.

We now compute the tangent space $T_{z}(\mathcal{Z}_{n})$ for $z\in\mathcal{Z}_{n}$ and show that $i^{-1}(A(\infty)\cdot X)\subset GL(\infty)\cdot X$ is 
Lagrangian.  Represent $z=z_{1}\dots z_{n}$ with $z_{i}\in Z_{G_{i}}(X_{i})$ for $i=1,\dots, n$.  
Let $Y\in M(\infty)$ be given by $Y=\Ad(z) X$, so that $Y_{n}=\Ad(z_{1}\dots z_{n-1})X_{n}$.  
Then $Y\in A(\infty)\cdot X$ and $Y_{n}\in A(n)\cdot X_{n}$.  It follows
from Equation \ref{eq:Antangent} that 
$$
(dp_{n}^{-1})_{z}(T_{Y_{n}}(A(n)\cdot Y_{n}))=\mbox{span}\{ Y_{i}^{j}: 1\leq j\leq i\leq n\}=\mbox{span}\{(d\fnij)_{Y_{n}}: 1\leq j\leq i\leq n\}.
$$
It follows from the definition of strong regularity that $\dim \mbox{span}\{ Y_{i}^{j}: 1\leq j\leq i\leq n\}=\dnone$.  
Since $\dim\mathcal{Z}_{n}=\dnone$ and $\mathcal{Z}_{n}$ is non-singular, we have
\begin{equation}\label{eq:Zntangent}
T_{z}(\mathcal{Z}_{n})=\mbox{span}\{ Y_{i}^{j}: 1\leq j\leq i\leq n\}.
\end{equation}
Part (2) of the theorem now follows immediately from Proposition \ref{p:Lag} and part (4) of Proposition \ref{prop:sreg}.
Part (3) of the theorem follows from Equation \ref{eq:Zntangent} along with Equations \ref{eq:bigHamiltonian} and \ref{eq:diff}.



 \qed

\end{proof}

\bibliographystyle{amsalpha.bst}

\bibliography{bibliography-Lau}

\def\cprime{$'$} \def\cprime{$'$} \def\cprime{$'$} \def\cprime{$'$}
  \def\cprime{$'$} \def\cprime{$'$} \def\cprime{$'$} \def\cprime{$'$}
  \def\cprime{$'$} \def\cprime{$'$} \def\cprime{$'$} \def\cprime{$'$}
  \def\cprime{$'$}
\providecommand{\bysame}{\leavevmode\hbox to3em{\hrulefill}\thinspace}
\begin{thebibliography}{DPW02}

\bibitem[BB97]{bahturin-benkart}
Yuri Bahturin and Georgia Benkart, \emph{Highest weight modules for locally
  finite {L}ie algebras}, Modular interfaces ({R}iverside, {CA}, 1995), AMS/IP
  Stud. Adv. Math., vol.~4, Amer. Math. Soc., Providence, RI, 1997, pp.~1--31.

\bibitem[BS94]{bahturin-strade}
Yu.~A. Bahturin and H.~Strade, \emph{Locally finite-dimensional simple {L}ie
  algebras}, Mat. Sb. \textbf{185} (1994), no.~2, 3--32.

\bibitem[CE10]{CE}
Mark Colarusso and Sam Evens, \emph{On algebraic integrability of
  {G}elfand-{Z}eitlin fields}, Transform. Groups \textbf{15} (2010), no.~1,
  46--71.

\bibitem[CG97]{CG}
Neil Chriss and Victor Ginzburg, \emph{Representation theory and complex
  geometry}, Birkh\"auser Boston Inc., Boston, MA, 1997.

\bibitem[Col11]{Col1}
Mark Colarusso, \emph{The orbit structure of the {G}elfand-{Z}eitlin group on
  {$n\times n$} matrices}, Pacific J. Math. \textbf{250} (2011), no.~1,
  109--138.

\bibitem[DP99]{DP}
Ivan Dimitrov and Ivan Penkov, \emph{Weight modules of direct limit {L}ie
  algebras}, Internat. Math. Res. Notices (1999), no.~5, 223--249.

\bibitem[DP11]{dimitrov-penkov}
Ivan Dimitrov and Ivan Penkov, \emph{A {B}ott-{B}orel-{W}eil theorem for
  diagonal ind-groups}, Canad. J. Math. \textbf{63} (2011), no.~6, 1307--1327.

\bibitem[DPW02]{DPW}
Ivan Dimitrov, Ivan Penkov, and Joseph~A. Wolf, \emph{A {B}ott-{B}orel-{W}eil
  theory for direct limits of algebraic groups}, Amer. J. Math. \textbf{124}
  (2002), no.~5, 955--998.

\bibitem[Gl{\"o}05]{glockner}
Helge Gl{\"o}ckner, \emph{Fundamentals of direct limit {L}ie theory}, Compos.
  Math. \textbf{141} (2005), no.~6, 1551--1577.

\bibitem[GS83]{GS}
V.~Guillemin and S.~Sternberg, \emph{The {G}el\cprime fand-{C}etlin system and
  quantization of the complex flag manifolds}, J. Funct. Anal. \textbf{52}
  (1983), no.~1, 106--128.

\bibitem[Har77]{Ha}
Robin Hartshorne, \emph{Algebraic geometry}, Springer-Verlag, New York, 1977,
  Graduate Texts in Mathematics, No. 52.

\bibitem[Hum75]{Hum}
James~E. Humphreys, \emph{Linear algebraic groups}, Springer-Verlag, New York,
  1975, Graduate Texts in Mathematics, No. 21.

\bibitem[Kos63]{Ko}
Bertram Kostant, \emph{Lie group representations on polynomial rings}, Amer. J.
  Math. \textbf{85} (1963), 327--404.

\bibitem[KR87]{KR}
V.~G. Kac and A.~K. Raina, \emph{Bombay lectures on highest weight
  representations of infinite-dimensional {L}ie algebras}, Advanced Series in
  Mathematical Physics, vol.~2, World Scientific Publishing Co. Inc., Teaneck,
  NJ, 1987.

\bibitem[Kum02]{Ku}
Shrawan Kumar, \emph{Kac-{M}oody groups, their flag varieties and
  representation theory}, Progress in Mathematics, vol. 204, Birkh\"auser
  Boston Inc., Boston, MA, 2002.

\bibitem[KW06a]{KW1}
Bertram Kostant and Nolan Wallach, \emph{Gelfand-{Z}eitlin theory from the
  perspective of classical mechanics. {I}}, Studies in {L}ie theory, Progr.
  Math., vol. 243, Birkh\"auser Boston, Boston, MA, 2006, pp.~319--364.

\bibitem[KW06b]{KW2}
Bertram Kostant and Nolan Wallach, \emph{Gelfand-{Z}eitlin theory from the
  perspective of classical mechanics. {II}}, The unity of mathematics, Progr.
  Math., vol. 244, Birkh\"auser Boston, Boston, MA, 2006, pp.~387--420.

\bibitem[LN04]{loos-neher}
Ottmar Loos and Erhard Neher, \emph{Locally finite root systems}, Mem. Amer.
  Math. Soc. \textbf{171} (2004), no.~811, x+214.

\bibitem[Mil]{MiDmod}
D.~Milicic, \emph{Lectures on the algebraic theory of ${D}$-modules}.

\bibitem[MN02]{MN}
Andy~R. Magid and Nazih Nahlus, \emph{Provarieties and observable subgroups of
  pro-affine algebraic groups}, J. Group Theory \textbf{5} (2002), no.~1,
  59--74.

\bibitem[Nee98]{neeb}
Karl-Hermann Neeb, \emph{Holomorphic highest weight representations of
  infinite-dimensional complex classical groups}, J. Reine Angew. Math.
  \textbf{497} (1998), 171--222.

\bibitem[OR03]{OR}
Anatol Odzijewicz and Tudor~S. Ratiu, \emph{Banach {L}ie-{P}oisson spaces and
  reduction}, Comm. Math. Phys. \textbf{243} (2003), no.~1, 1--54.

\bibitem[PS11]{penkov-serganova}
Ivan Penkov and Vera Serganova, \emph{Categories of integrable {$sl(\infty)$}-,
  {$o(\infty)$}-, {$sp(\infty)$}-modules}, Representation theory and
  mathematical physics, Contemp. Math., vol. 557, Amer. Math. Soc., Providence,
  RI, 2011, pp.~335--357.

\bibitem[Vai94]{Va}
Izu Vaisman, \emph{Lectures on the geometry of {P}oisson manifolds}, Progress
  in Mathematics, vol. 118, Birkh\"auser Verlag, Basel, 1994.

\end{thebibliography}

\end{document}